\documentclass[3p,preprint,review]{elsarticle}
\usepackage[utf8]{inputenc}
\usepackage{setspace}

\usepackage{graphicx}%
\usepackage{multirow}%
\usepackage{amsmath,amssymb,amsfonts}%
\usepackage{amsthm}%
\numberwithin{equation}{section}
\numberwithin{figure}{section}
\usepackage[bb=dsserif,bbscaled=1.2]{mathalpha}
\usepackage{bm}
\usepackage{mathrsfs}%
\usepackage{mathtools}
\usepackage[title]{appendix}%
\usepackage{xcolor,xspace}%
\usepackage{textcomp}%
\usepackage{manyfoot}%
\usepackage{url}
\usepackage{algorithm}
\usepackage{algpseudocode}
\usepackage{relsize}
\usepackage{placeins}
\usepackage{morefloats}

\newcommand{\ordersHistory}{\ensuremath{\mathcal{P}_{\text{history}}}\xspace}
\newcommand{\orders}{\ensuremath{\mathcal{P}}\xspace}
\newcommand{\layout}{\ensuremath{\mathcal{H}}\xspace}
\newcommand{\interfaces}{\ensuremath{\mathcal{I}}\xspace}
\newcommand{\movers}{\ensuremath{\mathcal{M}}\xspace}
\newcommand{\drugs}{\ensuremath{\mathcal{G}}\xspace}
\newcommand{\packedDispensers}{\ensuremath{\mathcal{D}}\xspace}

\newcommand{\numTiles}{\ensuremath{n_{\text{tiles}}}\xspace}
\newcommand{\numMovers}{\ensuremath{n_{\text{movers}}}\xspace}
\newcommand{\maxMovers}{\ensuremath{m_{\text{max}}}\xspace}
\newcommand{\numDispensers}{\ensuremath{n_{\text{disp}}}\xspace}
\newcommand{\numInterfaces}{\ensuremath{n_{\text{inter}}}\xspace}

\newcommand{\drugUtil}{\ensuremath{u_g}\xspace}
\newcommand{\dispenserCount}{\ensuremath{z_g}\xspace}
\newcommand{\dispenserLoad}{\ensuremath{\pi_g}\xspace}
\newcommand{\tileLoad}{\ensuremath{\mu_k}\xspace}
\newcommand{\maxTileLoad}{\ensuremath{\mu_{\max}}\xspace}
\newcommand{\hatMaxLoad}{\ensuremath{\hat{\mu}_{\max}}\xspace}
\newcommand{\hatDispenserCount}[1][g]{\ensuremath{\hat{z}_{#1}}\xspace}
\newcommand{\hatDispenserLoad}{\ensuremath{\hat{\pi}_g}\xspace}
\newcommand{\fillDuration}{\ensuremath{\Delta^g_p}\xspace}
\newcommand{\placement}{\ensuremath{\varphi}\xspace}
\newcommand{\placementExt}{\ensuremath{\placement: \packedDispensers \cup \interfaces \to \layout}\xspace}
\newcommand{\permElement}[1][]{\ensuremath{{\sigma_{#1}}}\xspace}
\newcommand{\tilesDistance}[1]{\ensuremath{\| \placement(\permElement_{#1}) - \placement(\permElement_{#1+1}) \|_1}\xspace}

\newcommand{\distMatrix}{\ensuremath{\mathcal{L}_{\text{tiles}}}\xspace}
\newcommand{\schedule}{\ensuremath{\mathbf{s}}\xspace} 
\newcommand{\tasks}{\ensuremath{\mathcal{T}}\xspace}
\newcommand{\sites}{\ensuremath{\mathcal{R}}\xspace}
\newcommand{\transits}{\ensuremath{\mathcal{W}}\xspace}
\newcommand{\taskAlternatives}{\ensuremath{\mathcal{A}}\xspace}

\newcommand{\tasksForPatient}[1]{\ensuremath{\mathcal{S}_{#1}}\xspace}

\newcommand{\trayOperation}{\ensuremath{\eta^{\interfaces}}\xspace}
\newcommand{\shortestPathPatient}{\ensuremath{\kappa_p}\xspace}
\newcommand{\patientPathTime}[1][p]{\ensuremath{\eta'_{#1}}\xspace}

\newcommand{\dispensersOnTile}[1]{\ensuremath{\varphi^{-1}(#1)}\xspace}

\usepackage{tabularx}
\newcolumntype{C}{>{\centering\arraybackslash}X}
\newcolumntype{L}{>{\raggedright\arraybackslash}X}
\newcolumntype{R}{>{\raggedleft\arraybackslash}X}
\usepackage{booktabs}%
\usepackage{listings}%
\usepackage{todonotes}
\usepackage{pgfplots}

\usepackage{caption}
\usepackage{subcaption}
\usepackage{soul}
\usepackage{pgfgantt}
\usepackage{pifont}% http://ctan.org/pkg/pifont
\usepackage{csvsimple}
\usepackage{float}
\usepackage{pgfplots}
\usepackage{pgfplotstable}
\usepgfplotslibrary{statistics}
\usetikzlibrary{decorations.pathreplacing,calc,positioning,calligraphy,arrows.meta,matrix}
\pgfplotsset{compat=1.18}
\usepackage[hidelinks]{hyperref}
\usepackage{longtable}
\usepackage{booktabs}
\biboptions{authoryear}

\makeatletter
\newcommand\resetstackedplots{
\makeatletter
\pgfplots@stacked@isfirstplottrue
\makeatother
\addplot [forget plot,draw=none] coordinates{(1,0) (2,0) (3,0) (4,0) (5,0) (6,0) (7,0) (8,0) (9,0) (10,0) (11,0) (12,0) (13,0) (14,0) (15,0)};
}
\makeatother

\newcommand{\m}[1]{\boldsymbol{#1}}

\newcommand{\tn}[2][inline]{\todo[#1, color=green!20]{\textbf{TN:} #2}}

\newtheorem{definition}{Definition}%
\newtheorem{proposition}{Proposition}

\newcommand{\eqnleft}[1]{%
    \mathllap{\text{#1}\quad\quad\quad} 
}
\usepackage{chngcntr}
\counterwithout{figure}{section}
\allowdisplaybreaks

\title{Integrated packing, placement, scheduling, and routing of personalized production: a pharmaceutical Industry 4.0 use-case with a planar transport system}

\makeatletter
\def\ps@pprintTitle{%
 \let\@oddhead\@empty
 \let\@evenhead\@empty
 \def\@oddfoot{\centerline{\thepage}}%
 \let\@evenfoot\@oddfoot}
\makeatother

\begin{document}

\author[aff1,aff2]{Viktor Emil Korladinov\texorpdfstring{\corref{cor1}}{}}
\ead{korlavik@fel.cvut.cz}
\author[aff1]{Antonín Novák}
\ead{antonin.novak@cvut.cz}
\author[aff1]{Zdeněk Hanzálek}
\ead{zdenek.hanzalek@cvut.cz}
\author[aff3]{Erik Sonntag}
\ead{Erik.Sonntag@vscht.cz}
\author[aff3]{František Štěpánek}
\ead{Frantisek.Stepanek@vscht.cz}

\cortext[cor1]{Corresponding author}
%% Author affiliation
\affiliation[aff1]{organization={Czech Institute of Informatics, Robotics and Cybernetics, Czech Technical University in Prague}, addressline={Jugoslávských partyzánů 1580/3}, postcode={160 00}, city={Prague 6}, country={CZ}}
\affiliation[aff2]{organization={Faculty of Electrical Engineering, Czech Technical University in Prague}, addressline={Karlovo Náměstí 13}, postcode={120 00}, city={Prague 2}, country={CZ}}
\affiliation[aff3]{organization={Department of Chemical Engineering, University of Chemistry and Technology}, addressline={Technická 3}, postcode={166 28}, city={Prague}, country={CZ}}

% Department of Chemical Engineering, University of Chemistry and Technology, Prague, Technická 3, Prague, Praha, 166 28, Czech Republic
\begin{abstract}
The recent emergence of planar transport systems necessitates re-evaluation of Flexible Manufacturing Systems (FMS) to address the simultaneous scheduling of internal logistics and production operations. By operating on a tile-based planar grid, these systems allow independent movers full two-dimensional freedom, mitigating inefficiencies inherent to traditional sequential lines. This paper applies a planar FMS framework to a real-world use case in the pharmaceutical industry: the automated production of personalized drugs.

Implementing this system requires solving optimization problems at both tactical and operational levels. The tactical level involves decisions regarding production line layout and the positioning of drug dispensers. A Mixed-Integer Quadratic Programming model is utilized for the packing problem to exploit drug co-occurrence patterns found in historical patient data. Subsequently, we solve the placement problem — a bi-level problem combining an assignment problem with Shortest Hamiltonian paths with neighborhoods — to arrange dispensers in a layout minimizing expected travel distances.

The operational level is encountered daily, scheduling individual movers to process new orders as quickly as possible. This scheduling problem is formulated using Constraint Programming, modeling movers as reservoir resources to ensure order completeness, complemented by a routing phase using an iterative conflict-resolution mechanism and DAG-based reasoning to convert schedules into conflict-free paths.

Evaluation using real-world prescription data for 40 drugs shows the framework scales efficiently across several layout topologies for up to 500 orders, with schedules that are highly effective and computationally tractable for daily operations.
\end{abstract}

\maketitle

\section{Introduction}

The manufacturing sector is currently undergoing a structural transformation driven by the demand for mass personalization. Unlike traditional mass production, which relies on rigid, linear workflows to achieve efficiency, modern production environments must be able to handle diverse manufacturing procedures with minimal setup and transportation times. 
Flexible Manufacturing Systems (FMS) have already been heavily studied as a solution, e.g., see survey paper by \cite{Yadav03042018}. However, the recent emergence of planar transport systems (e.g., Beckhoff XPlanar\footnote{Please see XPlanar system showcase ($\approx 1$min long) \url{https://www.youtube.com/watch?v=yIiZ_DzV0c8&t=60s}}) creates new opportunities and necessitates a re-evaluation of FMS as it represents a significant technological leap. While two-dimensional transport is not new to FMS, earlier vehicle-based systems usually required robotic arms' assistance to interact with stations. Modern tile-based systems achieve sufficient positioning precision for movers to interact directly with dispensers, eliminating such overhead along with  the inefficiencies inherent to sequential lines.

While this hardware offers unprecedented flexibility, it introduces a massive combinatorial design space exploration that we tackle in this paper. Unlike traditional lines, where the sequence of operations is dictated by the layout, a planar grid allows for independent routing and station sequencing for each order. This creates a complex multi-stage optimization challenge that must integrate tactical decisions (such as resource packing and spatial placement) with operational decisions (including scheduling and conflict-free routing). As shown by \cite{maenhout2013integrated}, decoupling these tactical decisions from daily operations yields good results and robust performance, provided the tactical plans anticipate operational constraints.

However, existing frameworks often treat these problems in isolation. Standard Multi-Agent Path Finding (MAPF) approaches (reviewed in depth by \cite{stern2019mapf}) typically prioritize collision avoidance for fixed layouts, ignoring the broader impact of station placement on scheduling decisions. Engineering intuition is insufficient for such coupled systems, as minor adjustments in station placement can drastically alter throughput. Consequently, there is a critical need for formal optimization frameworks that can mathematically model these interactions and determine cost-effective configurations prior to physical implementation. A simulation tool to help visualize the framework's work and output is available on YouTube\footnote{Simulation tool showcase ($\approx 2$min long) \url{https://www.youtube.com/watch?v=ukVYjog60Js}}.

\subsection{Motivation: Personalized Pharmaceutical Production}

We apply this planar FMS framework to a real-world use case in the pharmaceutical industry: the automated production of personalized medication. The efficient management of polypharmacy is a critical healthcare challenge, with an increasing number of patients prescribed five or more concurrent medications~\citep{wastesson2018update}. Although the pharmaceutical landscape is surprisingly narrow (39 medications account for approximately 50\% of all prescriptions~\citep{quinn2017dataset}), the vast number of potential drug combinations and dosage strengths creates immense combinatorial variability.

This variability renders the traditional mass-production model of Fixed-Dose Combinations economically unfeasible. As noted by \cite{hao2015fixed} and \cite{sutherland2015coprescription}, regimens containing more than three active ingredients become effectively unique to the patient, leading to excessive Stock Keeping Unit proliferation if manufactured via traditional means. Current solutions are polarized between rigid industrial platforms, which lack flexibility \citep{vanhoorne2020recent}, and manual pharmacy compounding, which lacks scalability \citep{carvalho2022role}.

To bridge this gap, the ``Lékobot'' platform has been proposed as a modular approach in which individualized doses are assembled from prefabricated subunits (mini-tablets or pellets) into capsules (illustrated in Figure~\ref{capsules_pic}, motivated by~\cite{sonntag2023method}). This moves dose adjustment from the formulation level to an automated assembly process. To achieve industrial throughput, the system employs planar transport (Figure~\ref{plane_cad}) to route patient-specific batches through a series of dispensers (shown in Figure~\ref{tile_cad}). This application provides a rich instance of the planar FMS problem, requiring the rigorous optimization of layout, resource packing, routing, and scheduling to balance the flexibility of compounding with the efficiency of industrial manufacturing.

\begin{figure}[H]
    \centering
    \begin{subfigure}[t]{0.4\textwidth}
    \centering
    \includegraphics[width=0.8\textwidth]{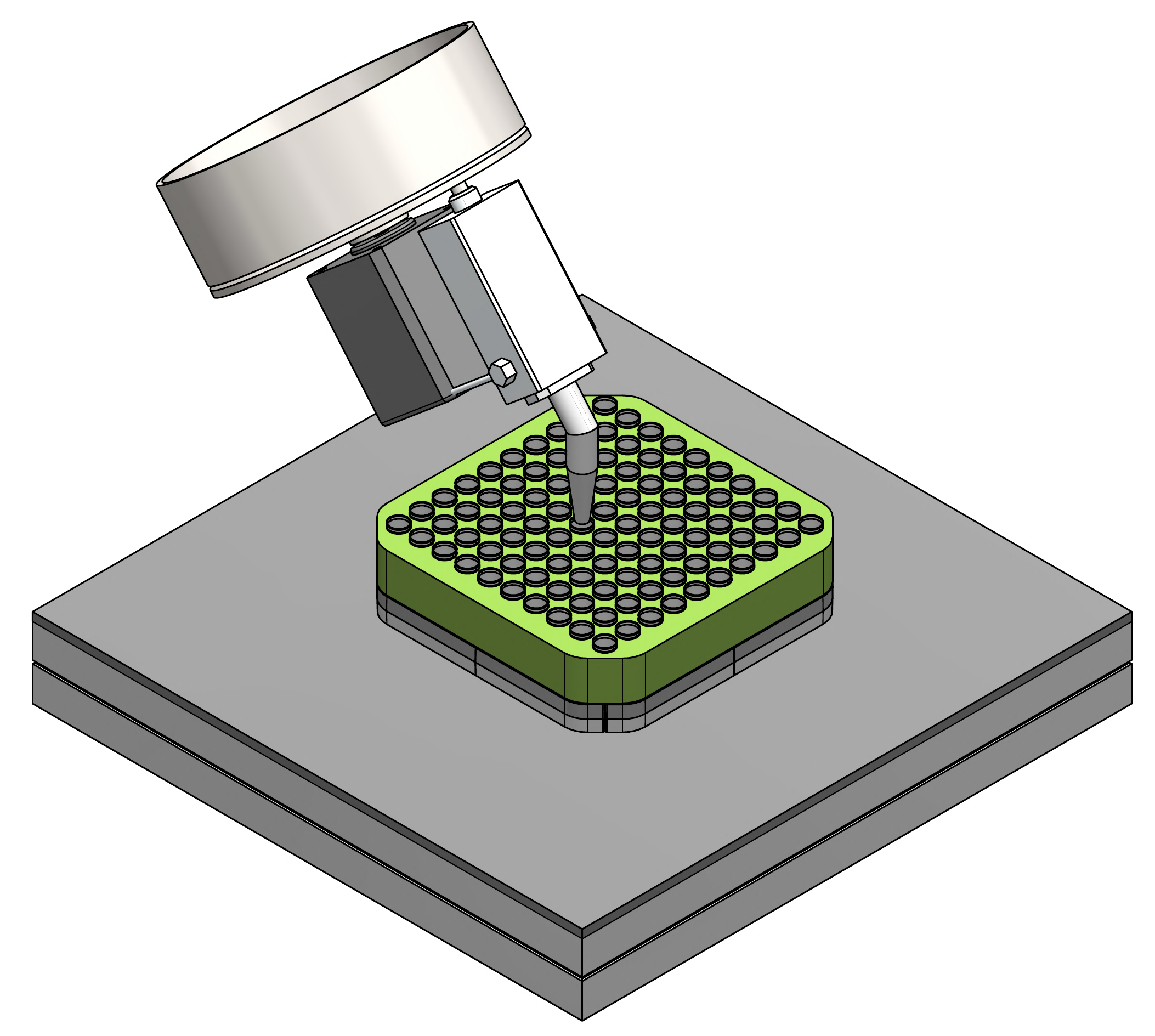}
    \caption{A tile, a mover carrying a 100-capsule cartridge, and an overhead dispenser.}
    \label{tile_cad}
    \end{subfigure}
    ~~~~~
    \begin{subfigure}[t]{0.5\textwidth}
    \centering
    \includegraphics[width=0.75\textwidth]{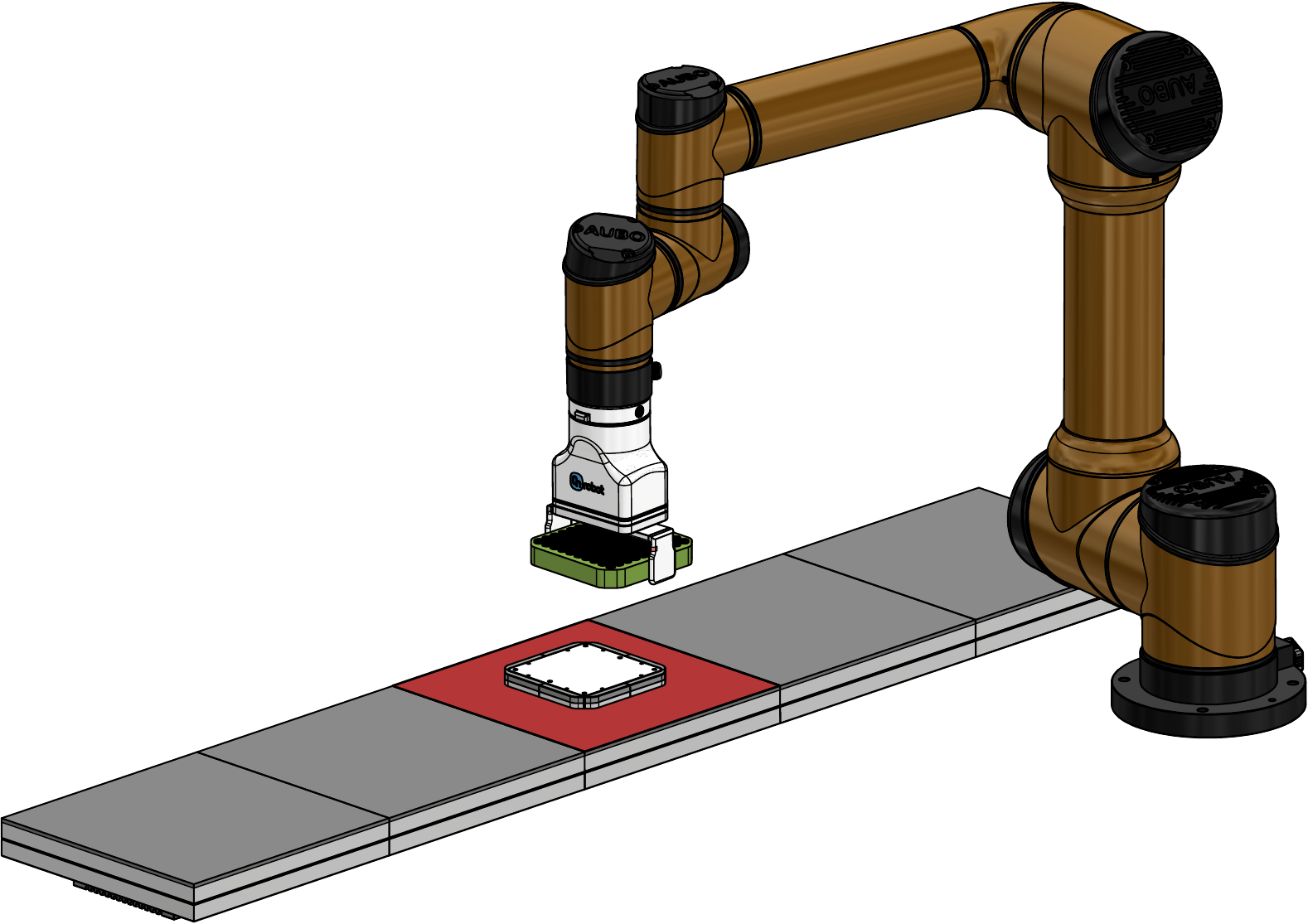}
    \caption{Robot arm interface designed for the loading and unloading of capsule cartridges.}
    \label{interface_cad}
    \end{subfigure}
    \begin{subfigure}[t]{0.34\textwidth}
    \centering
    \includegraphics[width=0.9\textwidth]{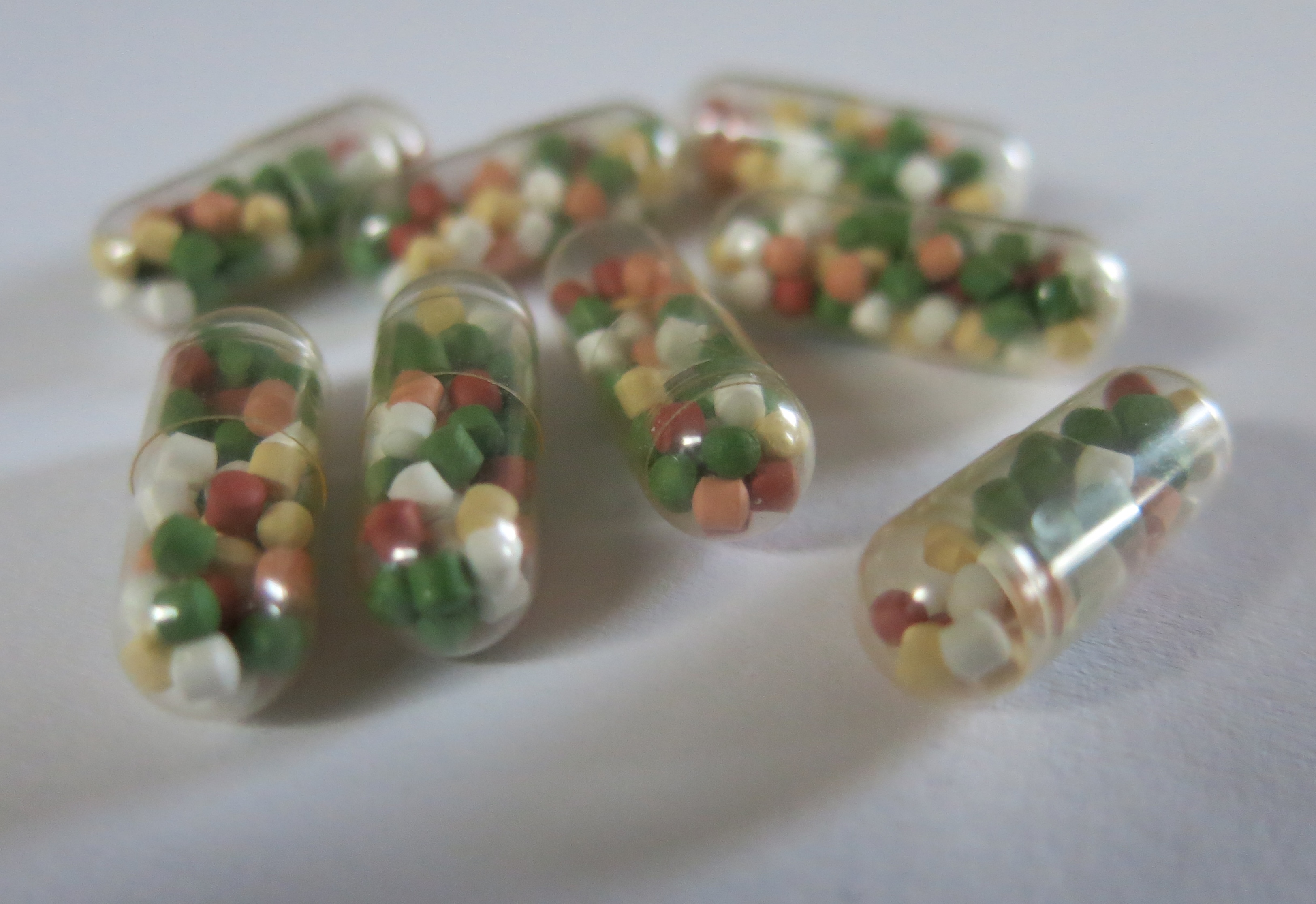}
    \caption{Capsules filled with mini-tablets.}
    \label{capsules_pic}
    \end{subfigure}
    % \begin{subfigure}[t]{0.42\textwidth}
    % \includegraphics[width=1\textwidth]{figs/introduction/platicko_crop.jpg}
    % \caption{platicko - zkusit zmenit za prazdne}
    % \end{subfigure}
    % \caption{real}
    \begin{subfigure}[t]{0.6\textwidth}
    \centering
    \includegraphics[width=0.75\textwidth]{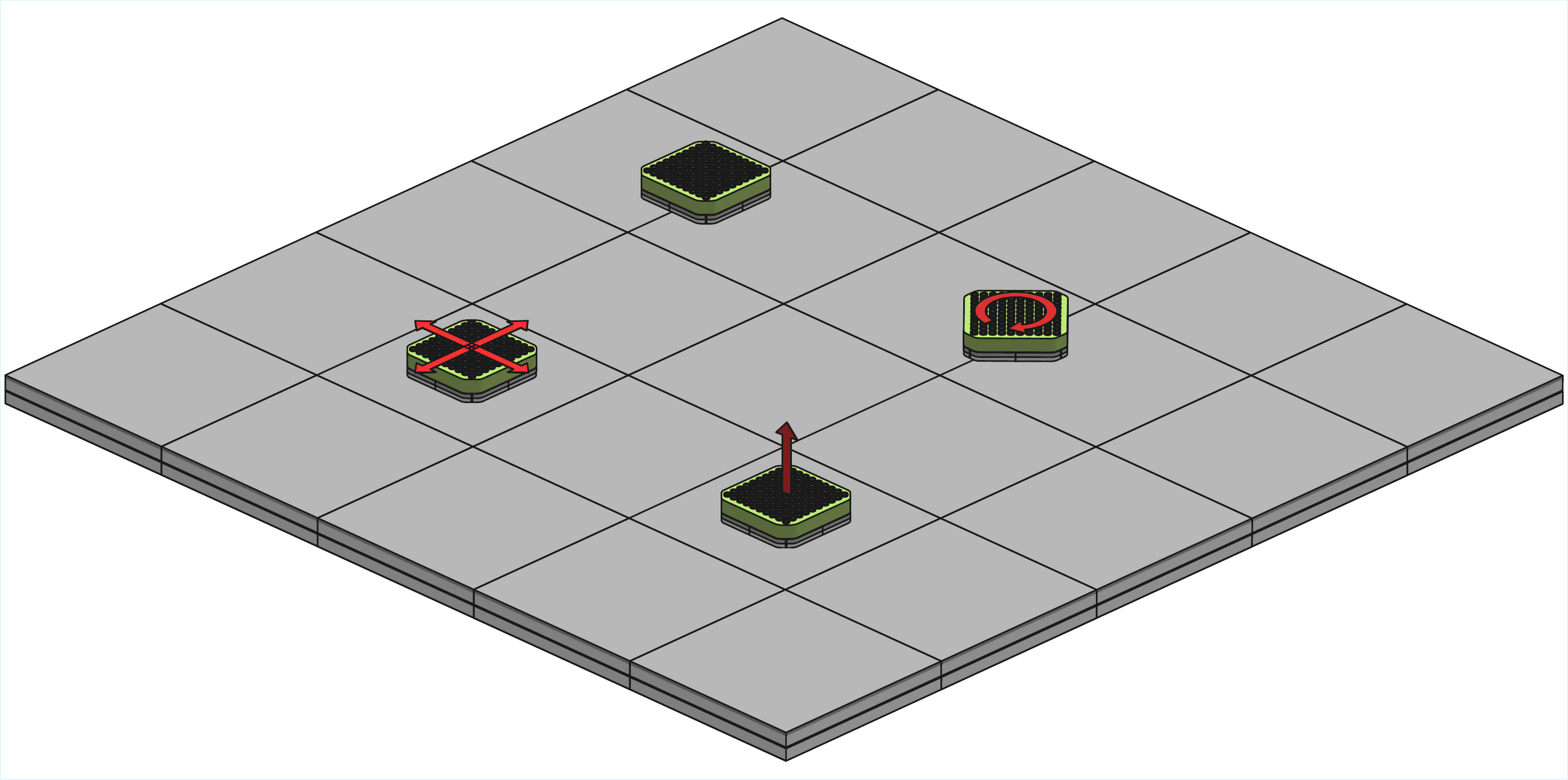}
    \caption{Planar system with 25 tiles and 4 movers.}
    \label{plane_cad}
    \end{subfigure}
    \caption{Components of personalized drug production.}
    \label{system_components}
\end{figure}

\subsection{Contributions and Paper Outline}

The main contributions of this paper are as follows:
\begin{itemize}
    \item We formalize the automated assembly of personalized medication on a planar transport system.
    \item We introduce a comprehensive, multi-stage optimization framework that decouples tactical, long-term decisions (resource packing and spatial placement) from operational, daily execution (scheduling and routing).
    \item At the tactical level, we formulate a Mixed-Integer Quadratic Programming (MIQP) model to exploit drug co-occurrence patterns, and we apply a Genetic Algorithm (GA) with the inner Shortest Hamiltonian path with neighborhood problem to optimize the physical placement of dispensers to minimize expected travel distances.
    \item At the operational level, we develop a Constraint Programming (CP) model that treats movers as take-give reservoir resources, coupled with a Directed Acyclic Graph (DAG) based routing logic to obtain collision-free paths.
    \item We prove the complexity of the placement problem and introduce a lower bound on the makespan that combines parallel machine scheduling with the Noon-Bean transform. 
    \item We evaluate the proposed approach using synthetic data derived from real-world prescription patterns, demonstrating that our framework consistently produces near-optimal makespan while effectively scaling through a batch-merging heuristic.
\end{itemize}

The remainder of this paper is organized as follows. Section 2 formally defines the problem at both the tactical and operational levels. Section 3 reviews related work in flexible manufacturing and multi-agent pathfinding. The tactical level is addressed in Section~\ref{sec:packing} (\textsf{Packing}) and Section~\ref{sec:placement} (\textsf{Placement}). The operational level is covered in Section~\ref{sec:scheduling} (\textsf{Scheduling}) and Section~\ref{sec:routing} (\textsf{Routing}). Section~\ref{sec:experiments} presents the experimental setup and discusses the results. Finally, Section~\ref{sec:conclusion} concludes the paper.

\section{Problem Statement}

As described above, we are faced with several optimization problems at two different levels. 
The tactical level is where long-term decisions, such as the layout of the production line and the number and positions of dispensers and interfaces, are made.
For example, the production line will be constructed in a certain way and will remain in its current configuration for many months.  
The second level---the operational, is encountered on a daily basis when a new set of orders is received, with the goal of scheduling individual movers to produce it as soon as possible.
A diagram illustrating the relationships between the two decision levels is shown in Figure~\ref{fig:diagram}.

\begin{figure}[ht]
    \centering
        \begin{tikzpicture}[scale=0.7, transform shape]

    \definecolor{vibrantblue}{RGB}{135, 206, 250}   % Sky blue
    \definecolor{vibrantgreen}{RGB}{127, 255, 170}  % Aquamarine
    \definecolor{vibrantpurple}{RGB}{218, 112, 214} % Orchid
    \definecolor{vibrantorange}{RGB}{255, 179, 71}  % Bright tangerine
    \definecolor{vibrantpink}{RGB}{255, 105, 180}   % Hot pink
    \definecolor{vibrantyellow}{RGB}{255, 236, 139} % Bright golden yellow

    \draw[green!20!black,fill=green!8,rounded corners=7,thick] (-1, -0.5) rectangle +(12,5) node[left,yshift=-1em,xshift=-0.2em] {\textbf{Tactical level}}; 
    \draw[green!20!black,fill=green!8,rounded corners=7,thick] (5, -1.3) rectangle +(12,-4.5) node[above left,xshift=-0.2em] {\textbf{Operational level}};
    
    % nodes
    \draw[thick,-stealth] (0.5,5.1) -- +(0,-2.1) node[midway, above, yshift=3em]  {dispensers \numDispensers};
    \draw[thick,-stealth] (2.4,5.7) -- +(0,-2.7) node[midway, above, yshift=3.9em]  {maximum dispensers $d_{max}$};
    \draw[thick,-stealth] (5,5.1) -- (5,4.5) node[midway, above, yshift=0.6em]  {historical orders \ordersHistory};
    \draw[thick,-stealth] (7,5.7) -- +(0,-1.2) node[midway, above, yshift=1.7em]  {tiles \numTiles};
    \draw[thick,-stealth] (9.5,5.1) -- (9.5,4.5) node[midway, above, yshift=0.6em]  {maximum movers \maxMovers};

   % \node at (3,4.5) {estimate $\m{\Sigma}$ and $\mathcal{D}$};

    \draw[blue!20!black,fill=blue!15,rounded corners=7,thick]
         (0,0) rectangle (4,3)
         node[above left] {\textsf{Packing}};

    %packing
    \begin{axis}[at={(100,480)},
            ybar stacked,
            height=0.16\textwidth,
            width=0.29\textwidth,
            bar width=3.3pt,
            ymin=170,
            yticklabels={},
            xtick={0,1,2,3,4,5,6,7},
            xticklabels={0,1,2,3,4,5,6,7},
            title={tile utilization}
        ]

        \addplot coordinates {
            (0, 200)
            (1, 220)
            (2, 192)
            (3, 191)
            (4, 186)
            (5, 191)
            (6, 193)
            (7, 187)
            };
        \addplot coordinates {
            (0, 10)
            (1,0)
            (2, 5)
            (3, 16)
            (4, 19)
            (5, 3)
            (6, 6)
            (7, 25)
            };
            
        \addplot coordinates {
            (0, 0)
            (1,0)
            (2, 14)
            (3, 0)
            (4, 0)
            (5, 13)
            (6, 7)
            (7, 0)
            };
    \end{axis}

    % placement

    \def\rows{7}     % Number of rows
    \def\cols{7}     % Number of columns
    \def\size{0.2}   % Size of each square
    \def\margin{0.07} % Margin between squares

    \draw[thick,-stealth] (12,2) -- +(-2,0) node[midway, right,xshift=3em] {layout \layout, $|\layout|=\numTiles+\numInterfaces$};
    \draw[thick,-stealth] (12,1) -- +(-2,0) node[midway, right, xshift=3em] {interfaces \interfaces, $|\interfaces|=\numInterfaces$};

    \draw[blue!20!black,fill=blue!15,rounded corners=7,thick]
         (6,0) rectangle (10,3)
         node[above left] {\textsf{Placement}};

    \draw[thick,-stealth] (4,-3) -- +(1,0) node[midway, left, xshift=-1.5em, yshift=0.1em]  {new customer orders \orders};

    \draw[thick,-stealth] (4,-4) -- +(1,0) node[midway, left, xshift=-1.5em, yshift=0.1em]  {available movers  \movers, $|\movers|=\numMovers$ };

    \draw[blue!20!black,fill=blue!15,rounded corners=7,thick]
         (6,-5) rectangle (10,-2)
         node[above left] {\textsf{Scheduling}};

    \draw[blue!20!black,fill=blue!15,rounded corners=7,thick]
         (12,-5) rectangle (16,-2)
         node[above left] {\textsf{Routing}};

    % arrows between boxes
    \draw[-stealth,thick] (4,1.5) -- +(2,0) node[midway, above] {packing \packedDispensers};
    \draw[-stealth,thick] (7.5,0) -- +(0,-2) node[midway, right, yshift=0.2em] {placement \placementExt};
    \draw[-stealth,thick] (10,-3.5) -- +(2,0) node[midway, above] {schedule $\schedule$};

       % Loop to create the grid
      \foreach \x in {0,...,\numexpr\cols-1} {
        \foreach \y in {0,...,\numexpr\rows-1} {
          \fill[gray!60] 
            ({(7.1+\x * (\size + \margin))}, {0.5+\y * (\size + \margin)}) 
            rectangle ++(\size, \size);
         \fill[gray!60] 
            ({(13.1+\x * (\size + \margin))}, {-4.5+\y * (\size + \margin)}) 
            rectangle ++(\size, \size);
        }
      }
    
      % interfaces
      \fill[darkgray!80] 
            ({(7.1+2 * (\size + \margin))}, {0.5 + 2 * (\size + \margin)}) 
            rectangle ++(\size, \size);
      \fill[darkgray!80] 
            ({7.1+4 * (\size + \margin)}, {0.5 + 3 * (\size + \margin)}) 
            rectangle ++(\size, \size);

      \fill[darkgray!80] 
            ({7.1+3 * (\size + \margin)}, {0.5 + 5 * (\size + \margin)}) 
            rectangle ++(\size, \size);

    % scheduling
        \begin{scope}[shift={(6.7,-3.9)}, scale=0.4]
            
            \foreach \y in {0,1,2} {
                \draw[darkgray!80 ] (0,\y-0.5) -- (7.5,\y-0.5);
            }

            \foreach \y in {1,2,3} {
                \node[text=black!80] at (-0.7,2.9-\y) {\huge $m_\y$};
            }
            
            % Mover 1
            \fill[vibrantblue]   (1,-0.3) rectangle (3.5,0.3);
            \fill[vibrantpurple!85]  (4,-0.3) rectangle (6.6,0.3);
            
            % Mover 2
            \fill[vibrantpurple!85] (0.5,0.7) rectangle (2,1.3);
            \fill[vibrantorange] (2.5,0.7) rectangle (4,1.3);
            
            % Mover 3
            \fill[vibrantpink!80]   (1,1.7) rectangle (3.5,2.3);
            \fill[vibrantblue] (4,1.7) rectangle (6,2.3);
        \end{scope}

    % routing
         
        %drugs
        \fill[vibrantpink] 
          ({13.1+2*(\size+\margin)}, {-4.5+2*(\size+\margin)}) 
          rectangle ++(\size, \size);
        \fill[vibrantblue] 
          ({13.1+4*(\size+\margin)}, {-4.5+3*(\size+\margin)}) 
          rectangle ++(\size, \size);
        \fill[vibrantpurple] 
          ({13.1+3*(\size+\margin)}, {-4.5+5*(\size+\margin)}) 
          rectangle ++(\size, \size);
        \fill[vibrantorange] 
          ({13.1+1*(\size+\margin)}, {-4.5+1*(\size+\margin)}) 
          rectangle ++(\size, \size);
        \fill[vibrantblue] 
          ({13.1+6*(\size+\margin)}, {-4.5+1*(\size+\margin)}) 
          rectangle ++(\size, \size);
        \fill[vibrantpurple] 
          ({13.1+6*(\size+\margin)}, {-4.5+5*(\size+\margin)}) 
          rectangle ++(\size, \size);
        \coordinate (pink)   at ({13.1+2*(\size+\margin)+0.5*\size}, {-4.5+2*(\size+\margin)+0.5*\size});
        \coordinate (blue)   at ({13.1+4*(\size+\margin)+0.5*\size}, {-4.5+3*(\size+\margin)+0.5*\size});
        \coordinate (purple) at ({13.1+3*(\size+\margin)+0.5*\size}, {-4.5+5*(\size+\margin)+0.5*\size});
        \coordinate (orange) at ({13.1+1*(\size+\margin)+0.5*\size}, {-4.5+1*(\size+\margin)+0.5*\size});
        \coordinate (blue2)   at ({13.1+6*(\size+\margin)+0.5*\size}, {-4.5+1*(\size+\margin)+0.5*\size});
        \coordinate (purple2) at ({13.1+6*(\size+\margin)+0.5*\size}, {-4.5+5*(\size+\margin)+0.5*\size});
        
        % first path
        \coordinate (pb-turn) at ([xshift=57*(\size+\margin)]pink);
        \path (pink) -- (pb-turn) coordinate[pos=0.75] (midPB);
        \draw[dashed] (pink) -- (midPB);
        \draw[ ->] (midPB) -- (pb-turn) -- (blue);
        
        % first mover
        \fill[red!60] 
          ([xshift=-0.05cm,yshift=-0.05cm]midPB) 
          rectangle ++(0.1cm,0.1cm);
        
        % second path
        \coordinate (po-turn) at ([yshift=-114*(\size+\margin)]purple);
        \path (purple) -- (po-turn) coordinate[pos=0.5] (midPO);
        \draw[dashed] (purple) -- (midPO);
        \draw[->] (midPO) -- (po-turn) -- (orange);
        
        % second mover
        \fill[red!60] 
          ([xshift=-0.05cm,yshift=-0.05cm]midPO) 
          rectangle ++(0.1cm,0.1cm);
    
        % thrid path
        \path (purple2) -- (blue2) coordinate[pos=0.3] (midPK);
        \draw[dashed] (blue2) -- (midPK);
        \draw[->] (midPK) -- (purple2);
        
        % second mover
        \fill[red!60] 
          ([xshift=-0.05cm,yshift=-0.05cm]midPK) 
          rectangle ++(0.1cm,0.1cm);
        
    \end{tikzpicture}
    
    \caption{Diagram of the proposed solution and mutual interactions of its components.}
    \label{fig:diagram}
\end{figure}
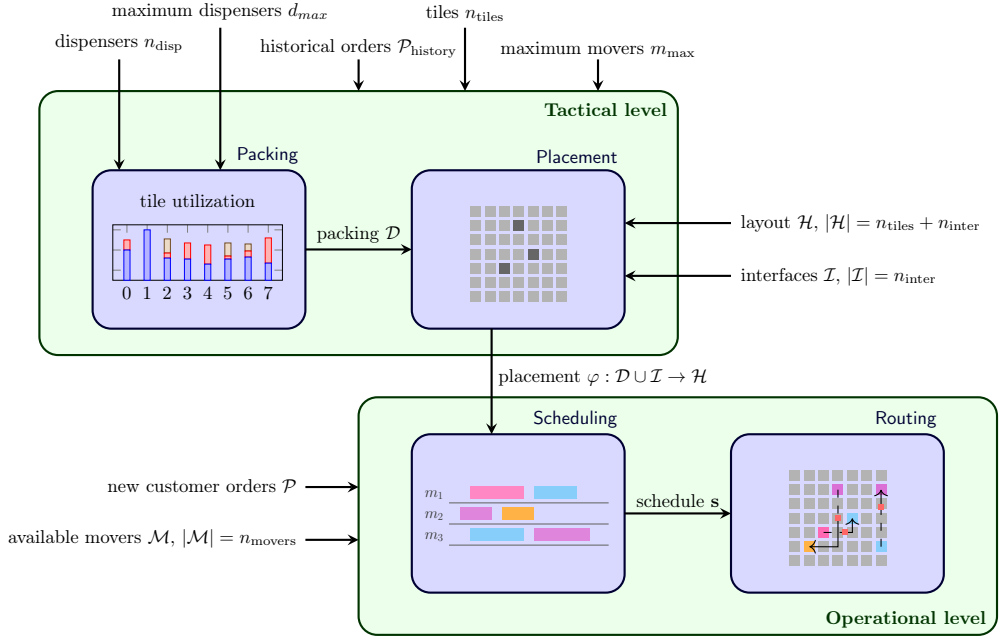

\subsection{Tactical Level}
The goal of the tactical level is to make long-term decisions related to the hardware construction of an efficient production line.
For this, we have access to a set of historical orders $\ordersHistory$ that were processed in the past, which we assume to be a representative sample for making tactical decisions. Each observation in the dataset represents an individual participant, identified by a unique sequence number. Each participant has a list of prescribed pharmacological agents and their corresponding dosage frequencies.

Namely, at the tactical level, we determine how many dispensers we should allocate to each drug and assign these dispenser locations within the given layout of the transportation system on the factory shop floor.
The layout is a subset of 2D integer lattice $\layout\subseteq \mathbb{N}\times\mathbb{N}$.
Each element $(i,j)\in \layout$ corresponds to an available tile in the system at $(x,y)$ coordinates.
For example, a square layout with total of 64 tiles as shown in Figure~\ref{fig:topology-examples:s8x8}, is represented as $\layout_\text{square 64}=\{1,\ldots,8\}^2$ and 
the ring layout with 64 tiles as shown in Figure~\ref{fig:topology-examples:r68} is given as $\layout_\text{ring 64}=\{1,\ldots,17\}^2\setminus\{2,\ldots,16\}^2$.
The tactical-level problem utilizes a set of $\numDispensers$ dispensers to be placed over $\numTiles$ dispensing tiles to provide the supply of a set of drugs $\drugs$ by using the maximum number $\maxMovers$ of available movers.
The mover starts each order on an arbitrary interface tile (to receive an empty cartridge) and ends on an arbitrary interface tile (to release a filled cartridge; the two interfaces can, but do not have to be the same). There are $\numInterfaces=|\interfaces|$ interface tiles.

Each dispenser contains one type of drug (to avoid contamination), but any drug $k\in\drugs$ can be allocated to multiple dispensers.
Further, we assume that each tile can accommodate at most $d_\text{max}$ dispensers due to space constraints (typically 4) or one interface location $i \in \interfaces$.
The upper green area in Figure~\ref{fig:diagram} represents the tactical level of decisions.

% To make estimates about the quality of the above tactical decisions, we have access to a set of historical orders $\ordersHistory$.

In this work, we make a deliberate choice to split the tactical level decisions into two problems. First, in Section~\ref{sec:packing}, we derive a \textsf{Packing} problem that (i)~decides how many dispensers should be allocated for each drug and (ii)~\textit{packs} specific dispensers into \textit{unplaced dispensing tiles} to leverage typical drug co-occurrence in individual orders. At this stage, a dispensing tile is \textit{unplaced}---it only represents a logical grouping of drugs that will eventually share a dispensing tile together, though the physical location of that dispensing tile on the grid remains undecided. Formally, the \textsf{Packing} problem associates drugs with these unplaced dispensing tiles to create a set 
$\packedDispensers = \{ (g, h) \mid g \in \drugs, h \in \{1, \dots, \numTiles\} \}$ 
denoting that drug $g$ is assigned to a specific (yet unplaced) dispensing tile $h$. 

The subsequent \textsf{Placement} step translates these logical groupings into a physical configuration. Having defined the composition of each tile, we must now determine their specific coordinates within the production line. As detailed in Section~\ref{sec:placement}, this involves mapping both the pre-packed dispensing tiles $\packedDispensers$ and the interfaces $\interfaces = \{(\textsc{interface}, h) \mid h \in \{1,\dots,\numInterfaces\}\}$ onto the physical layout $\layout$. 

We seek an assignment $\placementExt$ where each available coordinate $(x,y) \in \layout$ is occupied by either a single interface $i \in \interfaces$ or a dispensing tile. In the latter case, the tile at $(x,y)$ carries the specific set of dispensers $\{g \mid (g,h') \in \packedDispensers\}$ previously grouped into unplaced dispensing tile $h'$ during the packing stage. By optimizing this spatial arrangement, we aim to minimize the makespan of operational schedules for future orders $\orders$, assuming they follow the same distribution as the historical data $\ordersHistory$.

\subsection{Operational Level}
The operational level provides decision-making support to run daily operations, ensuring that orders planned for each day are produced.
Hence, the goal of the operational level is to determine how to operate the number of available movers $\numMovers$ on the given layout of dispensers $\placement$ so that we produce the set of current orders $\orders$.
% \todo{potrebuji nasledujici veliciny: dispensing time toho leku pro toho pacienta}
% \todo{mnozinu leku pro pacienta $p$: $\tasksForPatient{p}$ a dispensing time toho leku pro toho pacienta $\forall t \in \tasksForPatient{p}: \texttt{length}(t)$}
Each order $p \in \orders$ is given by a set of all drugs $\tasksForPatient{p}\subseteq \drugs$. 
The required dispensing quantity of the drug $g$ in order $p$ is an integer $\fillDuration\in\mathbb{N}$.
Hence, the order $p$ is represented as a set of tuples $p = \{(g,\fillDuration) \, | \, \forall g \in \tasksForPatient{p}\}$.
% Parameter $\fillDuration\in\mathbb{N}$ denotes the dispensing time (proportional to the requested amount) of drug $g$ for completing order $p$.

To complete an order $p$, the mover starts and stays for $\trayOperation\in\mathbb{N}$ time units at one of the interface locations (to mount an empty cartridge with capsules), visits all dispensers with required drugs $g\in \tasksForPatient{p}$ in an arbitrary order (i.e., in open-shop manner), spends the requested dispensing time $\fillDuration$ there, and finishes again in an interface location, spending again $\trayOperation$ time units.
Thus, at any moment, the mover processes only one order at a time, and each interface or dispenser tile serves a single mover.
The dispensing process can be preempted at any time and later resumed.
The paths of the movers are Manhattan-style (i.e., follow $\ell_1$ distance)  and are conflict-free.
The goal is to minimize the makespan, that is, the latest completion time of any mover.
See an illustration in Figure~\ref{fig:diagram} where the bottom green area displays all required inputs for making decisions at the operational level.

Similarly to the tactical problem, we deliberately decide to split the operational problem into two sub-problems here.
First, in Section~\ref{sec:scheduling}, we deduce the \textsf{Scheduling} problem that assigns orders $p\in\orders$ to the available amount of moves $\movers$ and decides on the sequencing of the individual dispensing operations.
The setup times between dispensing operations, reflecting the distance between dispensers, are enforced, but we choose to relax on mover conflicts during their transports.
To handle these conflicts, we introduce in Section~\ref{sec:routing} the \textsf{Routing} problem that finally produces  conflict-free paths for the movers by slightly extending the optimistic schedule generated from the \textsf{Scheduling} problem solver.

\section{Related Work}

Flexible Manufacturing Systems (FMS) are well-established in the literature, typically characterized as medium-volume automated systems \citep{Yadav03042018}. However, the emergence of high-precision planar transport systems, such as Beckhoff XPlanar or B\&R ACOPOS 6D, combined with the Industry 4.0 shift toward mass personalization \citep{wang_ind4}, has increased system flexibility to a point that requires updated planning strategies. Motivated by these advancements, this work revisits flexible systems, distinguishing between the tactical and operational decision levels required for effective management.

\cite{maenhout2013integrated} demonstrate that decoupling tactical decisions from operational decisions is essential for robust performance, provided that tactical plans anticipate operational constraints. 
In the context of manufacturing, this allows us to optimize the layout and dispenser configuration (tactical) separately from the daily routing of movers (operational), similar to the robust recovery policies proposed by \cite{akbarzadeh2024study} for healthcare resources.

The operational level handles day-to-day order fulfillment. The need to assign operations to one of several available dispensers mirrors the routing flexibility of the Flexible Job Shop Problem, as reviewed by \cite{DAUZEREPERES2024409}. The drug dispensing process allows for free operation ordering, a core feature of the Open Shop Problem \citep{abreu_contributions_2024}. The coordination of movers requires modeling them as renewable resources. \cite{hartmann_updated_2022} classify such complex constraints in their survey of Resource-constrained Project Scheduling Problem extensions.

What distinguishes this system from classical formulations is its reliance on internal logistics rather than machine reconfiguration. Traditional manufacturing lines are hindered by ``long line'' inefficiency; products must traverse every station regardless of necessity, as seen in customization systems like the ``Lip Picker'' \citep{Go2021}. Planar systems utilize dynamic routing to visit only required dispensers. This modularity allows the system to be reconfigured on the fly to manufacture different drugs. However, this transport-driven flexibility introduces a complex scheduling constraint: the coordination of the transport units themselves.
\citet{laborie_algorithms_2003} defines a reservoir resource, which functions like a fuel tank: it has a capacity and is only assignable when that capacity is positive. A specific subclass of this is the take-give resource proposed by \citet{hanzalek2017time}. In their model, capacity is binary (0 or 1); the resource is "taken" at the start of a task chain and released ("given") only upon completion. \citet{hartmann_updated_2022} gives structure to the different types of resources and classifies take-give resources as partially renewable cumulative resources. In this work, we model the movers as take-give resources, which ensures they remain dedicated to a single order until the entire dynamically routed sequence is complete.

Despite the complexity of the scheduling logic, the physical execution on the planar transport grid allows for some routing flexibility: with our assumed XPlanar tile configuration, two movers can share a single tile, weakening the strict vertex-collision model assumed by classical Multi-Agent Path Finding (MAPF) \citep{stern2019mapf}. Even purpose-built approaches for levitating planar systems \citep{nilsson2022path, rehme2025path} address only point-to-point navigation, and while \cite{janning2025conflict} successfully combine CBS with MPC to guarantee collision-free execution, none of these methods account for the multi-stop paths inherent to complete order fulfillment. Decoupled formulations such as \cite{zahradka2022quality} separate task assignment from routing in the Multi-Agent Multi-Item Pickup and Delivery setting, but applying exact MAPF solvers within an iterative scheduler remains computationally prohibitive: as \cite{fioravantes2024exact} demonstrate, MAPF is theoretically intractable and NP-hard even in basic planar graphs with fixed makespan. We therefore approximate routing logistics via sequence-dependent setup times and the take-give resource mechanism, augmented by an iterative conflict-resolution step, omitting MAPF frameworks from our solution.

% \tn{packing je v zasade zrevidovany}
\section{Packing Problem} \label{sec:packing}

The goal of the \textsf{Packing} problem is to decide how many dispensers will be allocated to each drug, and also which dispensers will be placed together on the same tile to take advantage of typical drug co-occurrences and to use the tile efficiently for less frequent drugs, as shown in Figure~\ref{fig:diagram}.
The input consists of a list of historical orders $\ordersHistory$ (same structure as $\orders$), the total number of tiles $\numTiles\in\mathbb{N}$, the total number of dispensers available $\numDispensers\in\mathbb{N}$, the maximum number of dispensers on a single tile $d_\text{max}\in\mathbb{N}$ and maximum number of movers $\maxMovers\in\mathbb{N}$.
The maximum number of dispensers to be placed on a tile is a parameter that follows from physical constraints imposed by the dimensions of dispensers and tile size.
%Without loss of generality, in this paper, we assume that this value is fixed to 4, which appears to be a reasonable number based on the consultation with domain experts.
We remind that the goal of the packing phase is not to assign tiles into the layout, but only to decide on the (i)~multiplicity of each drug and (ii)~co-occurrence of drugs on the tiles.
% Hence, its output is a set of tiles with assigned drug dispensers, but the tiles are not placed in the layout, which would be the goal of the placement phase.

Therefore, the problem solved by the packing phase is formulated at the capacity level, where \ordersHistory, the list of past orders, is used to estimate the future demand for each type of drug.
The problem is formulated as the following MIQP model.
We are given the following parameters: $\numTiles\in\mathbb{N}$ number of tiles available, $\numDispensers\in\mathbb{N}$ number of dispensers, $\drugUtil \in\mathbb{R}^+_0$ estimated total cumulative dispensing time for drug $g\in \drugs$.
This number essentially represents, e.g., the total time in seconds that will be spent by dispensing drug $g\in \drugs$ over all orders in \ordersHistory.

As for the decision variables, we use $y_{k,g}\in\{0,1\}$ indicator whether drug $g$ is assigned on unplaced tile $k$ (i.e., which is not placed at any particular position at the moment), $\dispenserCount\in\mathbb{N}_0$ is amount of dispensers for drug $g$, $\dispenserLoad\in\mathbb{R}^+_0$ is the estimated load for one dispenser for drug $g$, i.e., $\dispenserLoad =\drugUtil/\dispenserCount$, assuming they will be utilized uniformly.
Indeed, at the tactical stage of decision-making, we do not have access to the information about the scheduling of the specific movers (which is to be revealed at the operational level), hence we assume a uniform distribution of the load.

Furthermore, $\tileLoad\in\mathbb{R}^+_0$ is the total load of all drugs on tile $k$ and $\maxTileLoad=\max_k \tileLoad$ is the maximum total load over all tiles.
Since we have $\numInterfaces$ to be placed on separate tiles without any dispensers, they are not considered in the model, thus the total amount of tiles is decreased by $\numInterfaces$.
The full model is stated as follows:
\begin{align}
    &\min \maxTileLoad\label{eq:pack1-obj} \\
    \text{subject to}&\notag\\
    &\maxTileLoad \geq \tileLoad\quad \forall k \in \{1,\ldots, \numTiles\}\\
    & \dispenserLoad \cdot \dispenserCount = \drugUtil \quad \forall g \in \drugs \label{eq:pack1-bilin}\\
    &\sum^{\numTiles}_{k=1} y_{k,g} = \dispenserCount \quad \forall g \in \drugs\\
    % &1 \leq \dispenserCount \leq \maxMovers \quad \forall g \in \drugs \label{eq:pack1_pg}\\
    % &\dispenserLoad \geq 0.1\cdot \drugUtil \quad \forall i \in I \\
    &1\leq \sum_{g\in \drugs}y_{k,g} \leq d_\text{max} \quad \forall k \in \{1, \ldots, \numTiles\} \label{eq:pack1-max4}\\
    &\sum_{g\in \drugs} \dispenserLoad\cdot y_{k,g} =\tileLoad \quad \forall k \in  \{1, \ldots, {\numTiles}\} \label{eq:pack1-load}\\
    &\sum^{\numTiles}_{k=1} \sum_{g\in \drugs} y_{k,g} \leq \numDispensers \label{eq:pack1-dmax}\\
   % &\dispenserCount \geq 1 \quad \forall i \in I \\
    \text{where}&\notag\\
    &y_{k,g} \in \{0,1\} \quad \forall k \in \{1,\ldots, \numTiles\}, \forall g \in \drugs\\
    &\dispenserCount \in \{0,\ldots, \maxMovers\}, \dispenserLoad \in \mathbb{R}^+_0 \quad \forall g \in \drugs \label{eq:pack1_pg}\\
    % &\dispenserLoad \in \mathbb{R}^+_0 \quad \forall g \in \drugs\\
    &\tileLoad \in \mathbb{R}^+_0 \quad \forall k \in \{1,\ldots, \numTiles\}\\
    &\maxTileLoad\geq0. \label{eq:pack1-finish}
\end{align}
The output of the model is a tile-drug matrix, as shown, for example, in Figure~\ref{fig:drug-tile-d65}.
Drugs are sorted along the horizontal axis in a non-increasing order of their expected total utilization \drugUtil.
The available individual tiles are displayed along the vertical axis without any particular ordering, as they have not yet been placed in the layout.
Each dot in the matrix corresponds to a value $y_{k,g}=1$, meaning that drug $g\in \drugs$ will be placed onto tile~$k$.
For example, we can see that the most requested drug will be allocated to 4 tiles.
On the other hand, drugs along the same row share a common tile.
\begin{figure}[ht]
    \pgfplotsset{
          colormap={whiteviridis}{
            rgb(0cm)=(1,1,1)                          % 0  -> white
            rgb(1cm)=(0.267004,0.004874,0.329415)     % viridis dark purple
            rgb(2cm)=(0.229739,0.322361,0.545706)
            rgb(3cm)=(0.127568,0.566949,0.550556)
            rgb(4cm)=(0.369214,0.788888,0.382914)
            rgb(5cm)=(0.993248,0.906157,0.143936)     % viridis yellow
          }
    }

    \pgfplotsset{
          colormap={whiteblue}{
            rgb(0cm)=(1,1,1)
            rgb(1cm)=(0.85,0.92,1)  % very light blue buffer
            rgb(2cm)=(0.40,0.65,0.90)
            rgb(3cm)=(0.12,0.35,0.70)
            rgb(4cm)=(0.02,0.08,0.25) % deep navy
          }
        }
    \centering
    
    \begin{subfigure}[t]{0.42\textwidth} 
    \centering % Center the plot within its subfigure box
    \begin{tikzpicture}
          \begin{axis}[
                height=6cm,       % <--- EXACT HEIGHT APPLIED HERE
                axis on top,
                axis equal image, % <--- Width auto-adjusts to keep squares perfect
                xlabel={drug $g$ [-]},
                ylabel={tile $k$ [-]},
                enlargelimits={abs=0.5},
                point meta=explicit,
                y dir=reverse,
                yticklabels={50, 40, 30, 20, 10, 0},
                ytick={0, 10, 20, 30, 40, 50},
                colormap name=whiteblue,
                colorbar style={
                    ylabel=expected tile utilization [-]
                }
            ]
                \addplot [
                    matrix plot*,
                ] table [meta index=2] {experiments/packing_coocurrence_movers_10_dispensers_65.txt};
            \end{axis}
        \end{tikzpicture}
        \caption{Tile-drug matrix of drugs using $65$ dispensers.}
        \label{fig:drug-tile-d65}
    \end{subfigure}%
    \hfill
    \begin{subfigure}[t]{0.54\textwidth} 
    \centering % Center the plot within its subfigure box
    \begin{tikzpicture}
      \begin{axis}[
            height=6cm,           % <--- EXACT SAME HEIGHT APPLIED HERE
            axis on top,
            axis equal image,     % <--- Width auto-adjusts to keep squares perfect
            xlabel={drug $g$ [-]},
            enlargelimits={abs=0.5},
            point meta=explicit,
            y dir=reverse,
            yticklabels={50, 40, 30, 20, 10, 0},
            ytick={0, 10, 20, 30, 40, 50},
            colormap name=whiteblue,
            colorbar,
            colorbar style={
                ylabel=expected tile utilization $\mu_k$ [-]
            }
        ]
            \addplot [
                matrix plot*,
            ] table [meta index=2] {experiments/packing_coocurrence_movers_15_dispensers_100.txt};
            
        \end{axis}
    \end{tikzpicture}
    
    \caption{Tile-drug matrix of drugs using $100$ dispensers.}
    \label{fig:drug-tile-d100}
    \end{subfigure}
    
    \caption{Tile-drug matrix represents values of variables $y_{k,g}$ for different amounts of dispensers \numDispensers with $\numTiles=50$ tiles.}
    \label{fig:drug-tile-matrix}
\end{figure}
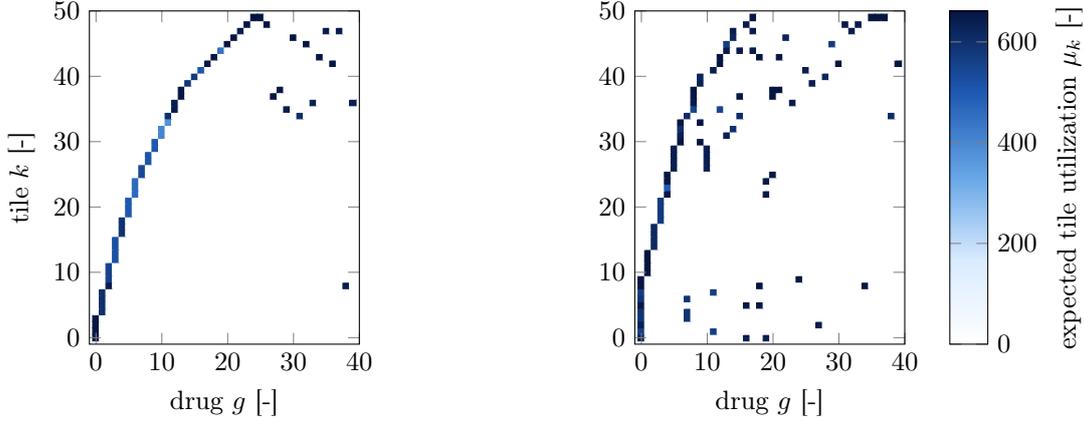

The model objective \eqref{eq:pack1-obj} aims to minimize the maximum expected tile load, as shown in the solution displayed in Figure~\ref{fig:tileutil} and also by the shades of blue in Figure~\ref{fig:drug-tile-matrix}.
Each bar in Figure~\ref{fig:tileutil-d65} corresponds to the expected tile utilization \tileLoad, which is calculated using equation~\eqref{eq:pack1-load}, which is expressed as a sum of contributions of assigned dispensers on the tile $k$.
The contribution of one dispenser \dispenserLoad for drug $g\in \drugs$ is calculated using the total demand \drugUtil for drug~$g$, divided by the number of dispensers \dispenserCount for drug~$g$.
This is implemented by a bilinear constraint~\eqref{eq:pack1-bilin}.
Hence, we assume that if \dispenserCount dispensers for drug $g$ will be used, then the overall demand of $g$ would be equally split among them, leading to their individual utilization $\drugUtil/\dispenserCount$.
This constraint expresses our assumption that, for high-quality placements, the operational-level schedule leads to balanced utilization of all tiles with drug $g$.

The rest of the constraints ensure that we can fit at most $d_\text{max}$ dispensers onto a tile by constraint~\eqref{eq:pack1-max4} and that the total amount of dispensers is not exceeded \eqref{eq:pack1-dmax}.
Furthermore, we included the constraint~\eqref{eq:pack1_pg} for practical reasons to limit the number of dispensers of drug $g$.
After initial experiments, we observed that when \numDispensers is large enough, then the model tends to generate many dispensers of the same drug to improve utilization balance, sometimes in such high numbers that they would be even larger than the movers available.

\begin{figure}[ht]
    \centering
    \begin{subfigure}[t]{0.48\textwidth}
    \centering
    \begin{tikzpicture}
        \begin{axis}[
            ybar stacked,
            height=0.45\textwidth,
            width=1\textwidth,
            xlabel={tile [-]},
            ylabel={utilization $\tileLoad$ [-]},
            grid=both,
            bar width=2pt,
            ymin=100,
            ymax=750,
            xmin=-1,
            xmax=50,
            legend pos={outer north east}, 
        ]

        \foreach \x in {0,1,2,3} {
            \addplot+ plot table [x=tile, y=utilization\x, col sep=comma] {experiments/packing_utilization_movers_10_dispensers_65.txt};
        }
            
        % \legend{1st dispenser, 2nd dispenser, 3rd dispenser}
        \end{axis}
    \end{tikzpicture}
    \caption{Expected tile utilization after packing using $\numDispensers=65$ dispensers, resulting in $\mu_{\max}=732$.}
    \label{fig:tileutil-d65}
    \end{subfigure}
    ~~~
    \begin{subfigure}[t]{0.48\textwidth}
    \centering
    \begin{tikzpicture}
        \begin{axis}[
            ybar stacked,
            height=0.45\textwidth,
            width=1\textwidth,
            % height=0.21\textwidth,
            % width=0.5\textwidth,
            xlabel={tile [-]},
            grid=both,
            bar width=2pt,
            ymin=100,
            ymax=750,
            xmin=-1,
            xmax=50,
            legend pos={outer north east}, 
            legend style={font=\footnotesize}
        ]
            \foreach \x in {0,1,2,3} {
                \addplot+ plot table [x=tile, y=utilization\x, col sep=comma] {experiments/packing_utilization_movers_15_dispensers_100.txt};
            }       
            \legend{1st dispenser, 2nd dispenser, 3rd dispenser, 4th dispenser}
        \end{axis}
    \end{tikzpicture}
    
    \caption{Expected tile utilization after packing using $\numDispensers=100$ dispensers, resulting $\mu_{\max}=661.5$.}
    \label{fig:tileutil-d100}
    \end{subfigure}
    \caption{Expected tile utilization with individual dispensers' contributions.}
    \label{fig:tileutil}
\end{figure}
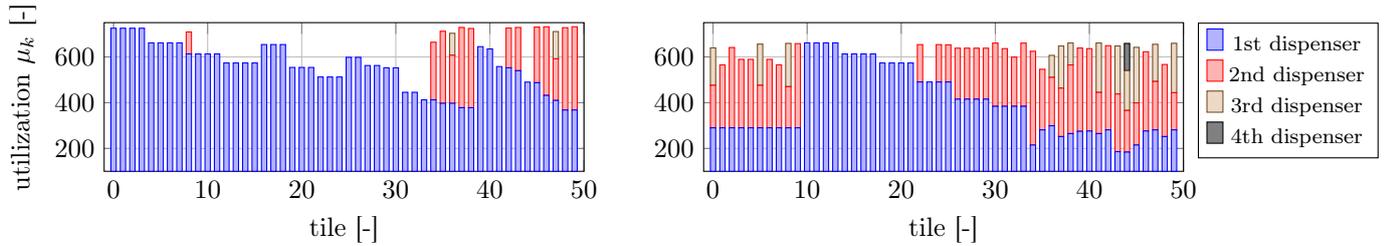

With the increasing number of dispensers \numDispensers available, we can allow more balanced utilization of the tiles.
For example, a solution to the packing problem with $\numDispensers=100$ is shown in Figure~\ref{fig:drug-tile-d100} with expected tile utilization plotted in Figure~\ref{fig:tileutil-d100}---we see that maximum utilization is decreased compared to Figure~\ref{fig:tileutil-d65}, as well as the tiles have a more balanced workload.

As the number of tiles $\numTiles=50$ remains the same in both cases, more dispensers are being packed onto a single tile, as visible in Figure~\ref{fig:drug-tile-d100}.
In such cases, it would be advantageous to assign dispensers to tiles together, not only to minimize the tile workload but also to account for mutual drug co-occurrence, thereby reducing mover movements when completing an order. 
We have formulated the maximization of pairwise correlations as another optimization step following the minimization of the maximal utilization.
Please see details of this model in the Appendix B.

% \FloatBarrier
\section{Placement problem} \label{sec:placement}
% \textbf{Given:}
% \begin{itemize}
%     \item type of topology (line/double line/ring/square)
%     \item number of tiles, interfaces, dispensers and medicines
% \end{itemize}

% \textbf{Goal:}
% \begin{itemize}
%     % \item place interfaces on the tiles
%     \item place dispensers that are already packed together on the remaining tiles
%     % \item balance connectivity (expected number of steps for a mover/patient) and congestion metric
% \end{itemize}

% \begin{figure}[ht]
%     \centering
%     \includegraphics[width=1.2\linewidth]{figs/correlation_1_1.png}
%     \caption{Correlation of placement and scheduler objective with 1:1 dosage ratio.}
%     \label{fig:enter-label}
% \end{figure}

%\subsection{Placement problem definition}
We assume that the input to the \textsf{Placement} problem consists of the union of the set of (unplaced) tiles $\packedDispensers$ with packed dispensers together with the set of interface tiles $\interfaces$ (used for extraction of cartridges from movers with completed orders and intersection of an empty cartridge for a new order), the given layout \layout, $|\layout|=\numTiles+\numInterfaces$ and a set of orders \ordersHistory that reflect an empirical data, e.g., based on past orders.
In this work, for simplicity, we assume four basic types of layouts: \textit{line} (e.g., $\mathcal{H}_\text{line 25} = \{(1,1),\ldots,(1,25)\}$), \textit{doubleline}, \textit{ring}, and \textit{square}, as shown in Figure~\ref{fig:topology-examples}.
We refer to specific layout sizes with the string $|\layout| \sim |\interfaces|$.
For example, the square $8\times8 \sim 4$ layout refers to a square with 8 rows, 8 columns, and 4 interfaces.

%$|\layout|\,(|\interfaces|)$.
%captioned with $\numTiles/\numInterfaces$ description.
%, although the presented algorithms are not particularly limited to 

 \begin{figure}[ht]
    \centering
    \begin{subfigure}[c]{0.33\textwidth}
      \centering
        \begin{tikzpicture}
      % Define parameters for the grid
      \def\rows{8}     % Number of rows
      \def\cols{8}     % Number of columns
      \def\size{0.3}   % Size of each square
      \def\margin{0.1} % Margin between squares
      
        % Loop to create the grid
      \foreach \x in {0,...,\numexpr\cols-1} {
        % Log column being processed
        \typeout{Processing column \x}
        \foreach \y in {0,...,\numexpr\rows-1} {
          % Log row being processed
          \typeout{  Processing row \y}
          % Draw each square
          \fill[gray!60] 
            ({\x * (\size + \margin)}, {\y * (\size + \margin)}) 
            rectangle ++(\size, \size);
        }
      }
    
      % interfaces
      \fill[darkgray!80] 
            ({3 * (\size + \margin)}, {5 * (\size + \margin)}) 
            rectangle ++(\size, \size);
      \fill[darkgray!80] 
            ({5 * (\size + \margin)}, {4 * (\size + \margin)}) 
            rectangle ++(\size, \size);
      \fill[darkgray!80] 
            ({2 * (\size + \margin)}, {3 * (\size + \margin)}) 
            rectangle ++(\size, \size);
      \fill[darkgray!80] 
            ({4 * (\size + \margin)}, {2 * (\size + \margin)}) 
            rectangle ++(\size, \size);
    \end{tikzpicture}
      \caption{Square $8\times8 \sim 4$ layout.}
      \label{fig:topology-examples:s8x8}
    \end{subfigure}%
    \begin{subfigure}[c]{0.33\textwidth}
      \centering
         \begin{tikzpicture}
      % Define parameters for the grid
      \def\rows{17}     % Number of rows
      \def\cols{17}     % Number of columns
      \def\size{0.14}   % Size of each square
      \def\margin{0.05} % Margin between squares
      
        % Loop to create the grid
      \foreach \x in {0,...,\numexpr\cols-1} {
          % Draw each square
          \fill[gray!60] 
            ({\x * (\size + \margin)}, {0 * (\size + \margin)}) 
            rectangle ++(\size, \size);
        \fill[gray!60] 
            ({\x * (\size + \margin)}, {16 * (\size + \margin)}) 
            rectangle ++(\size, \size);
        \fill[gray!60] 
            ({0 * (\size + \margin)}, {\x * (\size + \margin)}) 
            rectangle ++(\size, \size);
        \fill[gray!60] 
            ({16 * (\size + \margin)}, {\x * (\size + \margin)}) 
            rectangle ++(\size, \size);
        }

      % interfaces
      \fill[darkgray!80] 
            ({0 * (\size + \margin)}, {8 * (\size + \margin)}) 
            rectangle ++(\size, \size);
      \fill[darkgray!80] 
            ({1 * (\size + \margin)}, {16 * (\size + \margin)}) 
            rectangle ++(\size, \size);
    \fill[darkgray!80] 
            ({9 * (\size + \margin)}, {16 * (\size + \margin)}) 
            rectangle ++(\size, \size);
      \fill[darkgray!80] 
            ({16 * (\size + \margin)}, {12 * (\size + \margin)}) 
            rectangle ++(\size, \size);
    \end{tikzpicture}
      \caption{Ring $64 \sim 4$ layout.}
      \label{fig:topology-examples:r68}
    \end{subfigure}%
    \begin{minipage}[c]{0.33\textwidth}
      \begin{subfigure}[t]{\linewidth}
        \centering
          \begin{tikzpicture}
      % Define parameters for the grid
      \def\rows{1}     % Number of rows
      \def\cols{25}     % Number of columns
      \def\size{0.15}   % Size of each square
      \def\margin{0.04} % Margin between squares
      
        % Loop to create the grid
      \foreach \x in {0,...,\numexpr\cols-1} {
        % Log column being processed
        \typeout{Processing column \x}
        \foreach \y in {0,...,\numexpr\rows-1} {
          % Log row being processed
          \typeout{  Processing row \y}
          % Draw each square
          \fill[gray!60] 
            ({\x * (\size + \margin)}, {\y * (\size + \margin)}) 
            rectangle ++(\size, \size);
        }
      }
    
      % interfaces
      \fill[darkgray!80] 
            ({12 * (\size + \margin)}, {0 * (\size + \margin)}) 
            rectangle ++(\size, \size);
      % \fill[darkgray!80] 
      %       ({7 * (\size + \margin)}, {1 * (\size + \margin)}) 
      %       rectangle ++(\size, \size);
      
    \end{tikzpicture}
        \caption{Line $25 \sim 1$ layout.}
        \label{fig:topology-examples:l25}
      \end{subfigure}

      \vspace{2em}

      \begin{subfigure}[t]{\linewidth}
        \centering
          \begin{tikzpicture}
      % Define parameters for the grid
      \def\rows{2}     % Number of rows
      \def\cols{25}     % Number of columns
      \def\size{0.15}   % Size of each square
      \def\margin{0.04} % Margin between squares
      
        % Loop to create the grid
      \foreach \x in {0,...,\numexpr\cols-1} {
        % Log column being processed
        \typeout{Processing column \x}
        \foreach \y in {0,...,\numexpr\rows-1} {
          % Log row being processed
          \typeout{  Processing row \y}
          % Draw each square
          \fill[gray!60] 
            ({\x * (\size + \margin)}, {\y * (\size + \margin)}) 
            rectangle ++(\size, \size);
        }
      }
    
      % interfaces
      \fill[darkgray!80] 
            ({15 * (\size + \margin)}, {0 * (\size + \margin)}) 
            rectangle ++(\size, \size);
      \fill[darkgray!80] 
            ({7 * (\size + \margin)}, {1 * (\size + \margin)}) 
            rectangle ++(\size, \size);
      
    \end{tikzpicture}
        \caption{Doubleline $2\times25 \sim 2$ layout.}
        \label{fig:topology-examples:d50}
      \end{subfigure}
    \end{minipage}
    \caption{Examples of considered layout topologies. Highlighted tiles display an example of a possible placement of interface locations $\interfaces$, which is together with packed tiles $\packedDispensers$, subject to optimization.}
    \label{fig:topology-examples}
\end{figure}
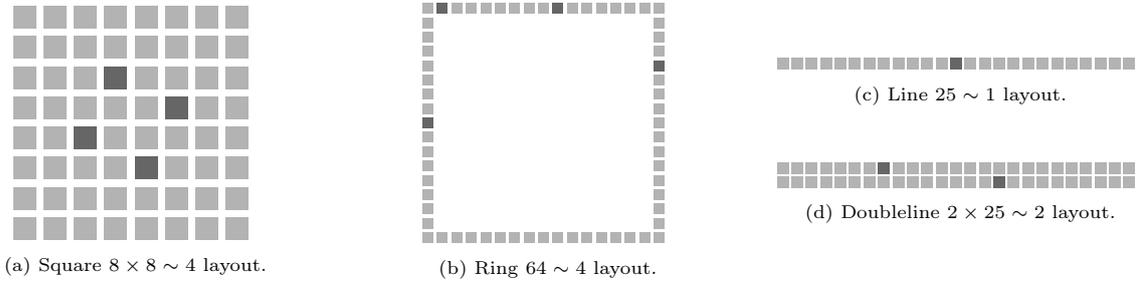

The goal of \textsf{Placement} is to find a mapping \placementExt \xspace between unplaced tiles with packed dispensers~$\packedDispensers$ (i.e., output of the \textsf{Packing} problem) and interfaces~$\interfaces$ to the layout \layout, such that the mean number of movers' steps in Manhattan distance needed to fulfill a set of historical orders \ordersHistory is minimized:
\begin{align}
     &\min_{\placementExt} \text{placement}(\placement) = \min_{\placementExt} \frac{1}{|\ordersHistory|} \sum_{p\in\ordersHistory} \min_{\permElement \in \interfaces \times \Pi(p) \times \interfaces} \sum^{|\permElement|-1}_{j=1} \tilesDistance{j}, \label{eq:placement_problem}
\end{align}
where \interfaces is the set of all interfaces and $\Pi(p)$ is the set of all permutations of dispensers \textit{relevant} to order $p$ defined as follows.
The set of dispensers providing drug $g$ is denoted as  $\packedDispensers_g=\left\{(g,k) \mid \forall (g,k) \in \packedDispensers\right\}$, i.e., the set of alternative dispensers of drug $g$.
Then, $\Pi(p)=\text{Perm}\left(\bigcup_{(g,\Delta^g_p)\in p} \packedDispensers_g\right)$ be the set of all permutations of $\packedDispensers_g$ for all drugs $g$ requested in the customer order $p$.
Finally, $\forall \permElement\in\Pi(p): \permElement{j}$ denotes $j$-th element of permutation \permElement of dispensers relevant to $p$. 

% \tn{vysvetlit strukturu: (master) assignment problem $\to$ (inner) shortest ham path with neighborhoods. koncept alternative dispensers $\approx$ neighborhood}

The \textsf{Placement} problem, as defined by \eqref{eq:placement_problem}, consists of two levels: (i)~an assignment problem that assigns packed dispensers and interfaces $\packedDispensers\cup\interfaces$ to the specific tiles placed in the given layout $\layout$, and (ii)~sum of the inner problems, where each resembles the Shortest Hamiltonian path problem with neighborhoods (SHPPN)~\citep{arkin1994approximation,deckerova2024combinatorial}. 
Each customer order $p\in\ordersHistory$ represents a single path in a graph defined by assignment $\varphi$, starting and finishing at some interface location $i\in\interfaces$.
The path must contain the required dispensers, each of which may exist in multiple alternatives, forming a so-called \textit{neighborhood}.
Due to the considered layouts and the permitted movements, the costs of the underlying graph resemble the $\ell_1$ metric on an integer lattice, i.e., Manhattan distance.

See an example in Figure~\ref{fig:placement_example}, where the solution $\varphi$ of the assignment part of the \textsf{Placement} problem is displayed.
To calculate the quality of placement $\varphi$, we have a set of three historical orders \ordersHistory given in the \textit{drugs} column of Figure~\ref{fig:example_placement_data}.
With these, the optimal solution of the corresponding three inner problems in \eqref{eq:placement_problem} is given in \textit{permutation $\sigma$} column in Figure~\ref{fig:example_placement_data}.
These correspond to the optimal sequence of dispensers together with a starting and finishing interface for each order in \ordersHistory.
Finally, the objective value of placement $\varphi$ is calculated as the arithmetic average of the mover steps in each permutation $\sigma$.
Hence, with the increasing size $|\ordersHistory|\to\infty$, the objective function approaches the expected number of steps needed to fulfill an order.

The objective function of this bi-level problem is designed to find a placement \placement such that dispensers with drugs requested in typical individual orders are placed near each other, thereby reducing the average travel time of movers.
Later in the experiments in Section~\ref{sec:experiments-correlation}, we will show that the choice of the objective function \eqref{eq:placement_problem} is justified by its correlation with the actual schedule makespan observed at the operational level.

% On the other hand, we show that this objective function leads to an \textsf{NP}-hard problem, even for the simplest layout.

\begin{figure}[ht]
    \centering
    \begin{subfigure}[t]{0.9\textwidth}
      \centering
        \begin{tikzpicture}[font=\footnotesize]
  % ===== Grid params =====
  \def\rows{4}     % Number of rows
  \def\cols{4}     % Number of columns
  \def\size{0.6}   % Size of each square
  \def\margin{0.15}% Margin between squares

  % ===== Color palette (distinct, tweak as needed) =====
\definecolor{cOMEPRAZOLE}{HTML}{1F77B4}
\definecolor{cLEVOTHYROXINE}{HTML}{59260b}
\definecolor{cLOVASTATIN}{HTML}{9acd32}
\definecolor{cVALSARTAN}{HTML}{D62728}
\definecolor{cMETFORMIN}{HTML}{9467BD}
\definecolor{cGLIPIZIDE}{HTML}{8C564B}
\definecolor{cPRAVASTATIN}{HTML}{E377C2}
\definecolor{cLISINOPRIL}{HTML}{FF7F0E}
\definecolor{cSIMVASTATIN}{HTML}{31A354}
\definecolor{cMETOPROLOL}{HTML}{17BECF}
\definecolor{cCLOPIDOGREL}{HTML}{3182BD}
\definecolor{cHYDROCHLOROTHIAZIDE}{HTML}{17BECF}
\definecolor{cLOSARTAN}{HTML}{393B79}
\definecolor{cAMLODIPINE}{HTML}{E6550D}
\definecolor{cATORVASTATIN}{HTML}{BCBD22}
\definecolor{cWARFARIN}{HTML}{636363}
\definecolor{cATENOLOL}{HTML}{9C9EDE}
\definecolor{cFUROSEMIDE}{HTML}{9ECAE1}
\definecolor{cINTERFACE}{HTML}{4DDDAD}

  % Drug -> color name lookup
  \pgfkeys{/drugcolor/.is family,/drugcolor,
    OMEPRAZOLE/.initial=cOMEPRAZOLE,
    LEVOTHYROXINE/.initial=cLEVOTHYROXINE,
    LOVASTATIN/.initial=cLOVASTATIN,
    VALSARTAN/.initial=cVALSARTAN,
    METFORMIN/.initial=cMETFORMIN,
    GLIPIZIDE/.initial=cGLIPIZIDE,
    PRAVASTATIN/.initial=cPRAVASTATIN,
    LISINOPRIL/.initial=cLISINOPRIL,
    SIMVASTATIN/.initial=cSIMVASTATIN,
    METOPROLOL/.initial=cMETOPROLOL,
    CLOPIDOGREL/.initial=cCLOPIDOGREL,
    HYDROCHLOROTHIAZIDE/.initial=cHYDROCHLOROTHIAZIDE,
    LOSARTAN/.initial=cLOSARTAN,
    AMLODIPINE/.initial=cAMLODIPINE,
    ATORVASTATIN/.initial=cATORVASTATIN,
    WARFARIN/.initial=cWARFARIN,
    ATENOLOL/.initial=cATENOLOL,
    FUROSEMIDE/.initial=cFUROSEMIDE,
    INTERFACE/.initial=cINTERFACE
  }
  \newcommand{\drugclr}[1]{\pgfkeysvalueof{/drugcolor/#1}}

  % ===== Coordinate -> drug(s) mapping (from your JSON)
  % (INTERFACE in uppercase to match the color key) =====
  \expandafter\def\csname cell@0x0\endcsname{OMEPRAZOLE}
  \expandafter\def\csname cell@0x1\endcsname{LEVOTHYROXINE}
  \expandafter\def\csname cell@0x2\endcsname{LOVASTATIN}
  \expandafter\def\csname cell@0x3\endcsname{VALSARTAN}
  \expandafter\def\csname cell@1x0\endcsname{METFORMIN,GLIPIZIDE,PRAVASTATIN}
  \expandafter\def\csname cell@1x1\endcsname{LISINOPRIL}
  \expandafter\def\csname cell@1x2\endcsname{SIMVASTATIN}
  \expandafter\def\csname cell@1x3\endcsname{INTERFACE}
  \expandafter\def\csname cell@2x0\endcsname{METOPROLOL,CLOPIDOGREL}
  \expandafter\def\csname cell@2x1\endcsname{INTERFACE}
  \expandafter\def\csname cell@2x2\endcsname{HYDROCHLOROTHIAZIDE}
  \expandafter\def\csname cell@2x3\endcsname{LOSARTAN,AMLODIPINE,ATORVASTATIN}
  \expandafter\def\csname cell@3x0\endcsname{WARFARIN}
  \expandafter\def\csname cell@3x1\endcsname{ATORVASTATIN}
  \expandafter\def\csname cell@3x2\endcsname{ATENOLOL}
  \expandafter\def\csname cell@3x3\endcsname{FUROSEMIDE}

% ===== Drawer for one square (90° clockwise rotation) =====
\newcommand{\drawCell}[2]{%
  % map (x,y) -> (cols-1 - y, x)
  \pgfmathsetmacro{\yy}{(\cols-1-#2)*(\size+\margin)}
  \pgfmathsetmacro{\xx}{#1*(\size+\margin)}

  % default background in case of gaps:
  \fill[gray!20] (\xx,\yy) rectangle ++(\size,\size);

  % read content
  \let\content\relax
  \expandafter\let\expandafter\content\csname cell@#1x#2\endcsname
  \ifx\content\relax
    \fill[gray!60] (\xx,\yy) rectangle ++(\size,\size);
  \else
    \edef\list{\content}
    % count entries
    \pgfmathtruncatemacro{\N}{0}
    \foreach \d [count=\i] in \list {\xdef\N{\i}}
    \pgfmathsetmacro{\w}{\size/\N}
    % draw vertical slices
    \foreach \d [count=\i] in \list {%
      \pgfmathsetmacro{\xstart}{\xx+(\i-1)*\w}
      \fill[\drugclr{\d}] (\xstart,\yy) rectangle ++(\w,\size);
    }
    % subtle border
    \draw[black, line width=0.15pt] (\xx,\yy) rectangle ++(\size,\size);
  \fi
}

  % ===== Render the grid =====
  \foreach \x in {0,...,\numexpr\cols-1} {
    \foreach \y in {0,...,\numexpr\rows-1} {
      \drawCell{\x}{\y}
    }
  }

  % ===== Axes: tick labels and titles (fixed with braces) =====
\pgfmathsetmacro{\gridW}{\cols*(\size+\margin)-\margin}
\pgfmathsetmacro{\gridH}{\rows*(\size+\margin)-\margin}
\def\axispad{0.18}

% x ticks (labels start at 1)
\foreach \ix in {0,...,\numexpr\cols-1} {
  \pgfmathtruncatemacro{\lab}{\ix+1}
  \node[below] at ({\ix*(\size+\margin)+0.5*\size}, {-\axispad}) {\lab};
}

% y ticks (labels start at 1)
\foreach \iy in {0,...,\numexpr\rows-1} {
  \pgfmathtruncatemacro{\lab}{\iy+1}
  \node[left] at ({-\axispad}, {\iy*(\size+\margin)+0.5*\size}) {\lab};
}

% axis titles
\node[below] at ({\gridW/2}, {-0.6}) {$x$ coordinate};
\node[rotate=90] at ({-0.7}, {\gridH/2}) {$y$ coordinate};

  % ===== Legend (multi-column; 7 rows per column as in your snippet) =====
  \pgfmathsetmacro{\legendx}{\cols*(\size+\margin)+0.35}
  \pgfmathsetmacro{\legendtop}{(\rows-1)*(\size+\margin)+\size}

  \def\rowspercol{7}                              % rows in each legend column
  \pgfmathsetmacro{\legendRowStep}{\size*0.7}     % vertical spacing between rows
  \pgfmathsetmacro{\legendColStep}{\size*5.8}     % horizontal spacing between columns
  \pgfmathsetmacro{\sw}{\size*0.5}                % color swatch size

  \def\legendList{INTERFACE,OMEPRAZOLE,LEVOTHYROXINE,LOVASTATIN,VALSARTAN,%
    METFORMIN,GLIPIZIDE,PRAVASTATIN,LISINOPRIL,SIMVASTATIN,%
    METOPROLOL,CLOPIDOGREL,LOSARTAN,AMLODIPINE,%
    ATORVASTATIN,WARFARIN,ATENOLOL,FUROSEMIDE,HYDROCHLOROTHIAZIDE}

  \foreach \lab [count=\i] in \legendList {
    \pgfmathtruncatemacro{\col}{(\i-1)/\rowspercol}
    \pgfmathtruncatemacro{\row}{mod(\i-1,\rowspercol)}
    \pgfmathsetmacro{\x}{\legendx + \col*\legendColStep}
    \pgfmathsetmacro{\y}{\legendtop - \row*\legendRowStep}

    \fill[\drugclr{\lab}] (\x,\y-\sw*1.2) rectangle ++(\sw,\sw);
    \node[anchor=west] at (\x+\sw+0.15,\y-\sw*0.6) {\textsc{\lab}};
  }
\end{tikzpicture}
        \vspace{-1.5em}
      \caption{Example placement $\varphi$ in square layout $4\times4 \sim 2$.}
      \label{fig:placement_example}
    \end{subfigure}
    
    \vspace{1em}
    
    \begin{subfigure}[t]{1\textwidth}
      \centering
      \resizebox{1\textwidth}{!}{%
        \begin{tabular}{c|c|c|c}
        \toprule
         order $p$  & drugs & permutation $\sigma$ & steps [-] \\
        \midrule
          $\#1$   &  $\{\textsc{atorvastatin}, \textsc{hydrochlorothiazide}\}$ & $(2,1)\xrightarrow{}(3,1)\xrightarrow{}(3,2)\xrightarrow{}(3,3)$ & 3 \\
          $\#2$   &  $\{\textsc{omeprazole}\}$ & $(3,3)\xrightarrow{}(2,3)\xrightarrow{}(1,3)\xrightarrow{}(1,4)\xrightarrow{}(2,4)\xrightarrow{}(3,4)\xrightarrow{}(3,3)$ & 6\\
          $\#3$    &  $\{\textsc{lisinopril}, \textsc{simvastatin}\}$& $(3,3)\xrightarrow{}(2,3)\xrightarrow{}(2,2)\xrightarrow{}(2,1)$& 3\\
       \midrule
    \multicolumn{3}{c}{$\text{placement}(\varphi) = (3 + 6 + 3)/3=4$}\\
    \bottomrule  
        \end{tabular}%
        }
        \caption{Example $\ordersHistory$ data and their optimal permutations $\sigma$.}
        \label{fig:example_placement_data}
    \end{subfigure}
    \caption{Example of the considered placement in the example square layout $4\times 4 \sim 2$.}
    \label{fig:placement-example}
\end{figure}

\subsection{Complexity of the Placement problem}
In this section, we hint at two sources of its complexity.
The first one comes from the assignment part of the problem, agnostic to the SHPPN arising in the inner problem.
We show that the decision version of \textsf{Placement} is \textsf{NP}-hard even for the \textit{line} layout.

\begin{proposition}[Complexity of \textsf{Placement}]
   Deciding whether \textsf{Placement} has a solution with the objective value less than or equal to $q$ is \textsf{NP}-hard. 
\end{proposition}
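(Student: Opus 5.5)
We reduce from the \emph{Minimum Linear Arrangement} problem (MLA), which is \textsf{NP}-complete \citep[Garey--Johnson, problem GT42]{arkin1994approximation}: given a graph $G=(V,E)$ and an integer $K$, decide whether there is a bijection $f:V\to\{1,\dots,|V|\}$ with $\sum_{\{u,v\}\in E}|f(u)-f(v)|\le K$. This matches the remark before the statement that the hardness comes from the assignment part, independent of the inner SHPPN. The line layout $\layout_{\text{line}}=\{(1,1),\dots,(1,n)\}$ with $\ell_1$ distance is exactly the integer line, so placement cost reduces to sums of $|f(u)-f(v)|$, provided the inner problem becomes trivial.

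\textbf{Construction.} For each vertex $v\in V$ create one drug $g_v$ and one packed dispensing tile carrying only $g_v$. This fixes $\packedDispensers$, so each drug has exactly one dispenser, all neighborhoods are singletons, and the only inner choice is the visiting order and the interfaces. For each edge $\{u,v\}\in E$ create one historical order $p_{uv}=\{(g_u,1),(g_v,1)\}$. The interface term must not distort the sum. First try $\numInterfaces=1$ with a layout of $|V|+1$ tiles. An order visiting $a<b$ then costs $2\max(|a-i|,|b-i|)$, not $|a-b|$, so the interface has to be neutralised. The fix is to give each order three drugs $g_u,g_v,g_{\mathrm{hub}}$ for a special drug $g_{\mathrm{hub}}$. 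A cleaner alternative is a gadget: attach a large clique (heavy weight) of dummy drugs that all co-occur with the interface, forcing the interface and the dummies to one end in any near-optimal solution. Then every order's cost is $2(\text{farthest point}) $, which is still not $|f(u)-f(v)|$.

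\textbf{Preferred approach.} Choose the interface to lie between the two ends and make each order a closed tour. In the line layout a closed tour from interface $i$ through positions $a<b$ costs $2(\max(b,i)-\min(a,i))$. Only the span of the visited set matters, which again is not MLA. The robust route is therefore a reduction from \emph{Optimal Linear Arrangement with endpoints fixed}, i.e.\ from the NP-hard problem of minimising $\sum_p \operatorname{span}(p\cup\{i\})$. Concretely, reduce from MLA by duplicating: place two interfaces at both ends of a line of length $|V|+2$. The order $p_{uv}$ then uses start $\in\interfaces$ and end $\in\interfaces$, and its cost is $(n+1)$ plus a correction. Replace this with the trick of putting $M$ copies of each order on an augmented instance where the interface cost is identical for every order: with interfaces at positions $0$ and $n+1$ and start $\ne$ end allowed, any order costs exactly $n+1$ regardless of placement. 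The quantity that does vary is obtained by requiring start $=$ end, which the model permits but does not force.

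\textbf{Main obstacle.} The main difficulty is making the interface contribution additive and placement-independent, so that the objective equals $\frac{1}{|E|}\big(c+\sum_{E}|f(u)-f(v)|\big)$. A fallback that avoids this entirely is to reduce from the \emph{minimum sum of spans} (interval/profile-type) arrangement problem. With one interface forced to position $1$ by a heavy dummy gadget, each order's cost is $2(\max f-1)$. Minimising $\sum_p \max_{v\in p} f(v)$ over bijections is exactly single-machine scheduling with precedence-free ``max of a set'' costs, equivalent to the NP-hard minimum sum vertex-ordering of hypergraphs (a generalisation of min-sum set cover/linear ordering). We then set $q=\frac{2}{|\ordersHistory|}(K-|\ordersHistory|)$. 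The final step is to verify that the gadget forcing the interface to the end costs a known constant, so that the threshold $q$ translates exactly in both directions.
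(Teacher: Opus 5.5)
Your proposal never reaches a complete reduction, so as it stands it does not prove the statement.

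\textbf{Where each attempt fails.} The MLA construction is dropped once you see that the interface term turns each order's cost into a farthest-point or span quantity rather than $|f(u)-f(v)|$. The two-interface variant rests on a false premise. The interfaces are themselves placed by $\placement$, and the inner problem minimises over both the start and the end interface. So with interfaces at the two ends, an order spanning positions $a<b$ costs $\min\{n+1,\,2b,\,2(n+1-a)\}$, which depends on the placement rather than being the constant $n+1$.

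The fallback depends entirely on a gadget that pins the single interface to position~$1$, and this gadget is never constructed. The one you sketch (dummy drugs ``co-occurring with the interface'') cannot work. Every order already starts and ends at an interface, so heavy dummy orders only pull the dummy tiles towards wherever the interface sits, on both sides of it. Nothing in the objective penalises a central interface. On the contrary, an order whose dispensers lie on one side costs only twice their farthest distance from the interface. Without the gadget an order costs $2\bigl(\max(b_p,i)-\min(a_p,i)\bigr)$, and the objective is not $\sum_p\max_{v\in p}f(v)$.

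There are further problems with the fallback:
\begin{itemize}
\item The NP-hardness of your source problem is only asserted.
\item The pointer to min-sum set cover is wrong, since that problem's objective is $\sum_p\min_{v\in p}f(v)$.
\item The threshold $q$ leaves out the cost of the gadget's own orders.
\end{itemize}

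\textbf{The missing idea.} \textsf{Placement} receives the packing $\packedDispensers$ as part of its input. A packed tile may carry several drugs, and a drug may sit on several tiles. The paper exploits exactly this and reduces from \textsf{X3C}:
\begin{itemize}
\item the drugs are the elements of $U$;
\item the $k$ packed tiles are the triples $S_i$;
\item the layout is a line of $k+2$ tiles with two interfaces;
\item $\ordersHistory$ is a single order requesting all of $U$.
\end{itemize}

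With a single order, the interface term needs no neutralising; it simply enters a counting argument. Each dispensing tile supplies at most three drugs, so at least $q$ tiles must be visited. On a line, a closed tour visiting $q$ tiles costs at least $2q$. A walk between two distinct interfaces costs at least one more than the number of tiles strictly between them, plus two steps for every tile visited outside that stretch.

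Hence the threshold is attained only by a straight walk between two interfaces that enclose exactly $q$ tiles. Those $q$ triples must supply all $3q$ drugs, so they are pairwise disjoint and form an exact cover. Conversely, an exact cover laid out between the two interfaces attains the threshold.

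By restricting yourself to singleton packings, you discarded the structure that makes the reduction short. You were then left fighting the interface term inside arrangement-type objectives.
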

    
The reduction will be shown from the Exact 3-Cover problem:
\begin{definition}[\textsf{X3C}]
Exact 3-Cover:
    \begin{itemize}
        \item Given: Set $U=\{1,2,\ldots, 3q\}$, family of sets $F=\{S_1,S_2, \ldots, S_k\}$, $|S_i|=3$, $S_i\subseteq U$.
        \item Goal: Does exist an exact cover of $U$, i.e., exists $F^\prime\subseteq F$, $|F^\prime|=q$ such that $S_i,S_j\in F^\prime$, $i\neq j$: $S_i \cap S_j = \emptyset$ and $\bigcup_{S_i\in F^\prime} S_i = U$.
    \end{itemize}
\end{definition}

\textsf{X3C} is known to be \textsf{NP}-complete, even if each $i \in U$ is contained in at most three sets $S_i$~\citep{garey1979computers}.
% \tn{need to slightly change wording as we are not packing dispensers to tiles by packing but drugs to dispenseres, dispensers are then placed}

\begin{proof}
We construct the following instance of \textsf{Placement} problem.
Consider a line layout with $k+2$ tiles containing 2 interfaces.
The set of all drugs corresponds to the set $U$. 
The packing of dispensers is given by each $S_i\in F$ set, i.e., we have $3k$ dispensers packed into a total of $k$ (unplaced) tiles.
Finally, we have a single order in \ordersHistory that requires all drugs $U$.
We claim that \textsf{X3C} has a solution if and only if there is a placement $\placement$ with the cost equal to $q$ mover steps.

Suppose we have solution $F^\prime=\{S^\prime_1, \ldots, S^\prime_q\}$ to \textsf{X3C}.
We construct a solution to the placement problem in the following way.
Place the first interface to tile $(1,1)$ and the second interface to tile $(1,q+2)$.
The $q$ tiles that pack a set of dispensers defined by the solution of \textsf{X3C} $S^\prime_1, \ldots, S^\prime_q$ are placed between the interfaces in an arbitrary order.
The remaining $k-q$ tiles are placed in arbitrary order on tiles $(2,q+2), (1,q+3),\ldots, (1,k+2)$.
See visualization in Figure~\ref{fig:reduction_a}.

\begin{figure}[ht]
    \centering
    \begin{tikzpicture}
      % Define parameters for the grid
      \def\rows{1}     % Number of rows
      \def\cols{29}     % Number of columns
      \def\size{0.2}   % Size of each square
      \def\margin{0.06} % Margin between squares
      
        % Loop to create the grid
      \foreach \x in {0,...,\numexpr\cols-1} {
        % Log column being processed
        \typeout{Processing column \x}
        \foreach \y in {0,...,\numexpr\rows-1} {
          % Log row being processed
          \typeout{  Processing row \y}
          % Draw each square
          \fill[gray!60] 
            ({\x * (\size + \margin)}, {\y * (\size + \margin)}) 
            rectangle ++(\size, \size);
        }
      }
    
      % interfaces
      \fill[darkgray!80] 
            ({11 * (\size + \margin)}, {0 * (\size + \margin)}) 
            rectangle ++(\size, \size);
      \fill[darkgray!80] 
            ({0 * (\size + \margin)}, {0 * (\size + \margin)}) 
            rectangle ++(\size, \size);

      \draw[decoration={brace,amplitude=2mm,mirror,raise=1mm},decorate] (0.26,0) -- (2.8,0);
      \node[] at (1.6, -0.6) {\scriptsize{$S^\prime_1, \ldots, S^\prime_q$}};
    \end{tikzpicture}
    \caption{Placement solution corresponding to a feasible instance of \textsf{X3C}.}
    \label{fig:reduction_a}
\end{figure}
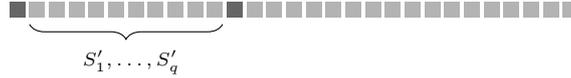
It can be seen that such a solution has cost $q$ and satisfies the customer order requirement, since a mover fulfilling the order can traverse from the left to the right interface, dispensing all 3 drugs on each visited tile.
Further, note that no better solution exists as the order requires $|U|=3q$ drugs, and each tile is packed with exactly 3 dispensers. Therefore, the mover has to visit at least $q$ tiles.

Next, consider a feasible solution to the placement problem with a cost equal to $q$, shown in Figure~\ref{fig:reduction_b}.
Let $a$ and $b$, $a<b$ be indices of tiles where interface locations are placed.
Further, let $S^\prime_{a+1}, \ldots, S^\prime_{b-1}$ be the sets defined by drugs that are dispensed at tiles $(1,a+1), \ldots, (1,b-1)$.
Since the order requires $3q=|U|$ drugs and each tile (except an interface) contains 3 dispensers, then no such solution can start and finish at the same interface location.
Therefore, in the optimal placement, there are $b-a-1=q$ tiles between the interfaces.
Since any feasible solution of placement with cost $q$ for a single order must ensure that all its $3q$ drugs can be dispensed in $q$ steps of the mover, it follows $\forall i,j\in \{a+1,\ldots, b-1\}, i<j: S^\prime_i \cap S^\prime_j = \emptyset$, hence $F^\prime=\{S^\prime_{a+1}, \ldots, S^\prime_{b-1}\}$ is an exact cover of $U$.   

\begin{figure}[ht]
    \centering
    \begin{tikzpicture}
      % Define parameters for the grid
      \def\rows{1}     % Number of rows
      \def\cols{29}     % Number of columns
      \def\size{0.2}   % Size of each square
      \def\margin{0.06} % Margin between squares
      
        % Loop to create the grid
      \foreach \x in {0,...,\numexpr\cols-1} {
        % Log column being processed
        \typeout{Processing column \x}
        \foreach \y in {0,...,\numexpr\rows-1} {
          % Log row being processed
          \typeout{  Processing row \y}
          % Draw each square
          \fill[gray!60] 
            ({\x * (\size + \margin)}, {\y * (\size + \margin)}) 
            rectangle ++(\size, \size);
        }
      }
    
      % interfaces
      \fill[darkgray!80] 
            ({8 * (\size + \margin)}, {0 * (\size + \margin)}) 
            rectangle ++(\size, \size);
      \fill[darkgray!80] 
            ({19 * (\size + \margin)}, {0 * (\size + \margin)}) 
            rectangle ++(\size, \size);

      \draw[decoration={brace,amplitude=2mm,mirror,raise=1mm},decorate] (2.34,0) -- (4.9,0);
      \node[] at (3.62, -0.6) {\scriptsize{$S^\prime_{a+1}, \ldots, S^\prime_{b-1}$}};
    \end{tikzpicture}
    \caption{A solution of placement problem where $q$ steps of a mover fulfills the order drug requirements $U$.}
    \label{fig:reduction_b}
\end{figure}
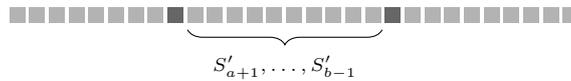
\end{proof}
Note that when we allow the packing of at most 2 dispensers onto the same tile, the above reduction would no longer hold, as the Exact 2-Cover (covering by sets with exactly two elements) is polynomially solvable---can be reduced to 2-SAT, which is polynomially solvable~\citep{garey1979computers}.
Thus, this leaves the possibility of having lower complexity on simple topologies such as \textit{line} or \textit{ring} if the maximum number of dispensers on a tile is limited to 2.
However, as indicated above, the problem of placing dispensers and interface locations is not the only source of hardness.
Without providing a formal proof, we suspect the problem may still be hard due to the inner SHPPN problem.
Indeed, it is known that the standard Traveling Salesman Problem in the plane equipped with $\ell_1$ metric still remains strongly \textsf{NP}-hard, see \citep{itai1982hamilton} or [ND23] in \citep{garey1979computers}.

\subsection{Genetic Algorithm with Inner SHPPN Problem}
To solve the \textsf{Placement} problem represented by equation~\eqref{eq:placement_problem}, we proposed the following algorithm.
Due to the complexity of the inner problem, we proposed a meta-heuristic algorithm that assigns packed tiles \packedDispensers to the layout \layout, where the placement quality of \placement is approximated as follows.
For each customer order $p$ in \ordersHistory, we solve the inner optimization problem in the form of
\begin{equation}
    \eqnleft{[SHPPN]} \min_{\permElement \in \interfaces \times \Pi(p) \times \interfaces} \sum^{|\permElement|-1}_{j=1} \tilesDistance{j}, \label{eq:placement_inner}
\end{equation}
%\todo{tady to makro neni nejak dobre, micha se $i$ a $j$}
which involves finding the shortest path from an interface location over a set of selected vertices representing relevant dispenser locations for order $p$, finishing in some interface location as well.
The underlying graph resembles a lattice equipped with $\ell_1$ metric.
Hence, the problem effectively reduces to finding a sequence \permElement of dispensers that the mover visits, as the path between locations $\placement(\permElement_j)$ and $\placement(\permElement_{j+1})$ always exists with length exactly $\tilesDistance{j}$.
For example, an optimal solution to these inner problems is shown in Figure~\ref{fig:example_placement_data} for the placement from Figure~\ref{fig:placement_example}, for three historical customer orders.

If one would not solve SHPPN problem exactly, one of the natural ways of solving \eqref{eq:placement_inner} would be to find $\permElement$ greedily---start at some interface location $i\in\interfaces$ and consecutively proceed from $\placement(\sigma_j)$ to the following location $\placement(\sigma_{j+1})$ with the lowest distance $\tilesDistance{j}$ and so on.
However, this obviously may lead to suboptimal results, and at the operational level, it may not always be possible, for example, because the nearest required dispenser may be occupied by another mover.

Therefore, at the operational level, we would sometimes have to assign a more distant dispenser, i.e., breaking the greedy ordering.
Hence, instead of a fully greedy method, we construct multiple $\permElement$, where each sequence of dispensers is also constructed in a greedy way, but the next dispenser in the sequence is not taken greedily, but is sampled inversely proportionally with respect to their distance from the current location $\placement(\sigma_j)$.
Thus, closer alternative dispensers are more likely to be included in the calculation of \eqref{eq:placement_inner} (reflecting what an efficient scheduler at the operational level would do), while not ruling out the occasional inclusion of more distant ones.

Consider, for example, the optimal permutation for order $\#1$ in Figure~\ref{fig:example_placement_data}.
An alternative permutation $\sigma^\prime$ would be $(2,1)\xrightarrow{}(2,2)\xrightarrow{}(2,3)\xrightarrow{}(3,3)\xrightarrow{}(4,3)\xrightarrow{}(4,2)\xrightarrow{}(3,2)\xrightarrow{}(3,3)$ with length 7.
However, there might be other permutations, for example, starting at a different interface location, such as $(3,3)\xrightarrow{}(4,3)\xrightarrow{}(4,2)\xrightarrow{}(3,2)\xrightarrow{}(3,3)$ consisting of 4 just steps.
Each such attempt to construct a permutation $\permElement$ for a single customer order is called an \textit{episode}.

For each customer order $p$ in historical data \ordersHistory, we therefore run a specified number of episodes (simulations) that may yield different feasible solutions to the inner problem~\eqref{eq:placement_inner}.
Objective values from solutions across all episodes of each historical order $p\in\ordersHistory$ are summed and normalized by the number of episodes.
In this way, the permutations $\permElement$ with lower cost have a higher probability of being involved in the calculation of placement quality, while the contribution of less probable but more expensive ones serves as a regularization to derive more robust placement, reflecting the occasional need for "suboptimal" routes at the operational level.

The pseudocode for the evaluation of a single placement solution \placement is given in Algorithm~\ref{alg:placer}.
In computations where the following location to move to (line 9) is sampled, if $\delta_j=0$, then we use the convention that $1/\delta_j = 1$. 
This essentially states that if on some location we can dispense more than one drug for the considered order $p$, then all of these drugs are dispensed there (in an arbitrary order), taking advantage of the packing of dispensers that reflects a positive correlation between drug occurrences (see Appendix B). 

\begin{algorithm}[ht]
\caption{Approximate solution to inner SHPP problem: fitness computation.}\label{alg:placer}
\begin{algorithmic}[1]
\Require layout \layout, placement \placementExt, number of \#\textit{episodes}.
\Ensure $\text{placement}(\placement)$.
\State $total \gets 0$
\ForAll{$p\in\ordersHistory$}\Comment{For each order in historical data.}
\ForAll{$e\in \{1,\ldots, \#\textit{episodes}\}$}\Comment{Simulate episodes.}
    \State sample $i^\star \sim \interfaces$ \text{ uniformly} \Comment{Sample uniformly an initial interface tile.}
    \State $R\gets \{g \, | \, \forall (g,\Delta^g_p) \in p\}$  \Comment{Drugs requested for order $p$.}
    \Repeat
        \State $L \gets \bigcup_{g \in R} \packedDispensers^{-1}(g)$ \Comment{Set of all dispensers supplying remaining $g\in R$ drugs.}
        \State compute distance $\delta_j=\| \placement(i^\star) - \placement(j) \|_1$, $\forall j\in L$
        \State sample $j^\star\sim L$ proportionally to $1/\delta_j$
        \State $total \gets total + \delta_{j^\star}$ \Comment{Mover travels from location $\placement(i^\star)$ to $\placement(j^\star)$.}
        \State $R \gets R \setminus \{\dispensersOnTile{j^\star}\}$ \Comment{Remove drug that was dispensed at tile $j^\star$.}
        \State $i^\star \gets j^\star$
    \Until{$R = \emptyset$} \Comment{Until all drugs have been dispensed.}
    \State compute distance $\delta^\prime_j=\| \placement(i^\star) - \placement(j) \|_1$, $\forall j\in \interfaces$ \Comment{Distance from $\placement(i^\star)$ to all interface locations.}
    \State sample $j^\star \sim \interfaces$ proportionally to $1/\delta^\prime_j$
    \State $total \gets total +\delta^\prime_{j^\star}$ \Comment{Go to a finishing interface tile.}
\EndFor
\EndFor
\State \Return $\textit{total}/\left(|\ordersHistory|\cdot \#\textit{episodes}\right)$ \Comment{Return the expected number of mover steps in placement \placement.}
\end{algorithmic}
\end{algorithm}

This algorithm is used for the computation of the fitness function that steers the metaheuristic algorithm towards placement \placement that minimizes \eqref{eq:placement_problem}.  %implemented in \texttt{Pymoo} library 
For this, we use a standard genetic algorithm (GA) that represents placements \placementExt{} as permutations that map (indexed) packed tiles $\packedDispensers$ to specific positions in the layout \layout.
The algorithm is implemented in the \texttt{Pymoo} library~\citep{Blank_2020} and uses \textit{order crossover} with \textit{inversion mutation} operators.
% Due to the way the objective function is computed (i.e., Algorithm~\ref{alg:placer}), which essentially "simulates" how the movers would fulfill the prescribed sequence of orders \ordersHistory on the given placement \placement, we call this placement algorithm \textit{Simulation placer}.

Furthermore, we would like to highlight the fact that the Algorithm~\ref{alg:placer} relaxes on the dispensing time; thus, it acts as if it spent zero time at each dispensing site.
Hence, congestion at the dispensers is not taken into account by the placement---all orders are essentially being completed simultaneously.
However, this is handled indirectly by sampling next dispensers in the sequence $\sigma$ to visit proportionally to the inverse distance from the current location.
This aims to reflect that the nearest compatible dispenser may not always be available.

Finally, we also tested a different placement algorithm, known as the \textit{Analytical placer}, which solved the inner optimization problem~\eqref{eq:placement_inner} optimally by enumerating all permutations \permElement.
This is computationally tractable if the number of alternative dispensers \dispenserCount for all drugs $g\in\drugs$ is not too high, as the typical order consists of approximately 4 drugs.
However, with more alternatives in dispensers (larger neighborhoods), this quickly leads to prohibitive computational time.
What is perhaps more important, however, is the fact that the placement objective \eqref{eq:placement_problem} is sensitive to the \textit{best-case traversal} through the given layout due to the inner problem~\eqref{eq:placement_inner}.
This means that, e.g., the second-shortest routes are not optimized, although they might be frequently used in scheduling at the operational level, especially when more than one mover is used.

\FloatBarrier
\section{Scheduling problem}\label{sec:scheduling}

The \textsf{Scheduling} problem addresses the operational-level decisions of executing a real, given set of orders~\orders on the given layout \layout with a placement \placement of dispensers $\packedDispensers$ and interfaces $\interfaces$.
The goal is to construct a schedule \schedule of dispensing operations for the given set of orders \orders. 
The schedule $\schedule$ assigns all dispensing operations of orders to movers while adhering to constraints: the capacity of the movers must not be exceeded, and each dispenser and tile can be occupied by only one mover during dispensing operation. 
The mover realizes one order $p\in\orders$ at a time, starting and finishing at some interface location $\interfaces$.
All operations must be realized at the physical locations of the corresponding dispensers and interfaces as defined by the placement~\placement.
Travel time between two operations performed at different tiles is equal to their $\ell_1$ distance in the layout $\layout$.

In scheduling terms, each order $p\in\orders$ represents a job. Every order $p$ consists of operations $\tasks_p$. Each operation is either dispensing of drug $g$ (with duration $\Delta^g_p$ ) or a cartridge exchange (loading/collection - both with constant duration \trayOperation). Interfaces and drug dispensers are represented as resources; every dispenser for drug $g$ can realize any $g$-dispensing operation, and every interface can realize any cartridge exchange operation. Two more resources are consumed with each operation - the tile below the assigned dispenser and a mover $m$. Mover $m$ is a take-give resource, meaning all operations from an order $p$ must be executed successively on the same mover $m$ before another order $p'$ can be realized by $m$. Travel times are incorporated through a transition distance matrix (later defined as \distMatrix{} in Section~\ref{distMatrixDef}) that specifies the minimum time interval required between consecutive operations based on their relative distance. There are no precedence constraints between the operations within an order, except that cartridge loading must be the first operation and cartridge collection the last. The output of the scheduling algorithm comprises the assignment of each order to a specific mover, the start times of all operations, the allocation of each drug-dispensing operation to a particular dispenser (in cases where multiple dispensers are available), and the assignment of each \textsc{start} and \textsc{finish} (cartridge loading and collecting, respectively) operation to specific interfaces. 

Formally, we define a set of all individual dispensing operations and cartridge exchange operations $\tasks = \bigcup_{p \in \orders} \tasks_p$, where for each order $p \in \orders$ we define a set of 4-tuples, one for each drug $g \in \tasksForPatient{p}$ in order $p$, one for a \textsc{start} operation and one for a \textsc{finish} operation:
\[
\tasks_p = \{(p, g, \fillDuration, \textsc{dispensing}) \mid \forall g \in \tasksForPatient{p}\} \cup \{(p,\textsc{interface},\trayOperation,\textsc{start}),(p,\textsc{interface},\trayOperation,\textsc{finish})\}.
\]
Thus, each operation is defined by a 4-tuple $(p, g, \fillDuration, \tau)$, where $p \in \orders$ denotes the order, $g \in \drugs \cup \{\textsc{interface}\}$ is the drug or interface, and $\fillDuration \in \mathbb{N}$ is the duration (interface operations always require $\trayOperation$ time units). The type $\tau$ distinguishes between $\textsc{dispensing}$ drug $g \in \drugs$ and the loading ($\textsc{start}$) or collecting ($\textsc{finish}$) of cartridges.

\subsection{Constraint Programming Model Variables}
Generally, scheduling problems formulated as Constraint Programming models use the concept of \emph{interval variables}~\citep{heinz2022constraint}, which serve as the primary building blocks of the model. For each 4-tuple $t \in \tasks$, we create an interval variable $\texttt{task\_var}_t$ with a duration $\fillDuration$ or $\trayOperation$ representing its execution, and whose start time is determined by the scheduler.
To allow the solver to decide which dispensing operation will be performed on which mover and at which dispenser, we define the set of alternatives as $\taskAlternatives= \bigcup_{t \in \tasks} \taskAlternatives_t$, 
where for each operation $t = (p, g, \Delta, \tau) \in \tasks$, we define the  set $\mathcal{A}_t = \{(m, t, h) \mid \forall m \in \movers,\, \forall h \in \layout: g \in \placement^{-1}(h)\}$, where $\placement^{-1}(h)$ is the set of drugs dispensed at tile $h\in\layout$.

Each alternative $(m, t, h)$ corresponds to the execution of operation $t$ by mover $m$ at tile $h$, while preserving the duration $\fillDuration$ and type $\tau$ of the original operation. We then create interval variables for all alternative tuples in the same manner as with the original tuples $t \in \tasks$. We denote these interval variables $\texttt{alt\_task}_{m,t,h}$.
To ensure that two interval variables do not overlap, they must be included within the same wrapper (data structure known as a sequence variable in IBM CP Optimizer). Consequently, multiple wrappers are used to group interval variables based on shared properties. An illustration of these wrappers is provided in Figure~\ref{fig:sched:variables_showcase}. It shows how interval variables for a specific order on a specific mover are grouped (e.g., $\texttt{order\_wrap}_{m_1,p_6}$), and how these order-specific groups are then collected under a main wrapper for each mover (e.g., $\texttt{mover\_wrap}_{m_0}$).

\begin{table}[H]
    \centering
    \small % Zmenšení písma na small (nebo \footnotesize pro ještě menší)
    \renewcommand{\arraystretch}{1.1} % Mírné snížení výšky řádků (standard je 1)
    
    \begin{tabularx}{\textwidth}{>{\raggedright\arraybackslash}p{4.5cm} >{\raggedright\arraybackslash}X}
        \toprule
        Variable & Description \\
        \midrule
        $\texttt{task\_var}_t$ & Interval variable, $\forall t \in \tasks$. Represents filling of a specific drug for an order or loading/unloading its cartridge. \\
        
        $\texttt{alt\_task}_{m,t,h}$ & Optional interval variable, $\forall (m,t,h) \in \taskAlternatives$. Represents the possibility of mover $m$ executing operation $t$ on tile $h$. \\
        
        $\texttt{order\_wrap}_{m,p}$ & Optional wrapper, $\forall m \in \movers, \forall p \in \orders$. Represents order $p$ being served by mover $m$. Spans across all interval variables for order $p$. \\
        
        $\texttt{wrapper\_mover\_tasks}_{m}$ & Wrapper for all operations assignable to mover $m$. For example, for $m' \in \movers$, $\texttt{wrapper\_mover\_tasks}_{m'} = \{\texttt{alt\_task}_{m,t,h} \mid \forall (m,t,h) \in \taskAlternatives, m = m' \}$. \\
        
        $\texttt{wrapper\_identical\_tiles}_h$ & Wrapper for all operations executable to tile $h$. For example, for $h' \in \layout$, $\texttt{wrapper\_identical\_tiles}_{h'} = \{\texttt{alt\_task}_{m,t,h} \mid \forall (m,t,h) \in \taskAlternatives, h = h' \}$. \\
        
        $\texttt{mover\_wrap}_m$ & Wrapper: $\{\texttt{mover\_wrap}_{m,p} \mid \forall p \in \orders \}$, $\forall m \in \movers$. Wrapper for all orders served by mover $m$. \\
        \bottomrule
    \end{tabularx}
    \caption{Scheduling model variables.}
    \label{table:sched:scheduling_variables}
\end{table}
\begin{figure}[htbp]
    \centering
    \begin{tikzpicture}[
        % --- STYLES ---
        task/.style 2 args={ rectangle, draw, font=\small, minimum height=0.7cm, text width=#1, align=center, fill=#2, rounded corners=2pt, text=black },
        task_dark/.style 2 args={ task={#1}{#2}, text=white },
        wrapper/.style={ rectangle, draw, thick, dashed, align=center, fill=#1!10, rounded corners=4pt, inner sep=0.15cm },
        arrow/.style={->, >=latex, thick},
        process_box/.style={ rectangle, draw, fill=blue!10, text width=16cm, minimum height=0.8cm, align=center, font=\large\bfseries, rounded corners=5pt, inner sep=0.3cm },
        pool_box/.style={ rectangle, draw, fill=green!10, text width=16cm, minimum height=0.8cm, align=center, font=\large\bfseries, rounded corners=5pt, inner sep=0.3cm, dashed },
        overall_frame/.style={ rectangle, draw=black, very thick, rounded corners=10pt, inner sep=0.5cm },
        scale=0.55,
        transform shape
    ]
        % Load necessary TikZ libraries
        \usetikzlibrary{positioning, chains, fit, calc}
        \pgfdeclarelayer{background} \pgfdeclarelayer{inner_background} \pgfdeclarelayer{main}
        \pgfsetlayers{background, inner_background, main}

        % Define Drug Colors
        \definecolor{catHYDROCHLOROTHIAZIDE}{HTML}{17BECF}
        \definecolor{catLISINOPRIL}{HTML}{FF7F0E}
        \definecolor{catSIMVASTATIN}{HTML}{31A354}
        \definecolor{catINTERFACE}{HTML}{4D4D4D}

        % --- TOP SECTION: Input and Process (Full Width & Centered) ---
        \node[pool_box] (pool_alternatives) at (2.2,4)
        { Input: Pool of Alternatives $(\texttt{alt\_task}) \in \mathcal{A}$ };

        \node[process_box, below = 0.6cm of pool_alternatives] (scheduler_process)
        { Scheduler };

        \draw[arrow] (pool_alternatives.south) -- (scheduler_process.north);

        % --- BOTTOM SECTION: The Schedule ---
        % Mover 0
        \node[task_dark={0.2cm}{catINTERFACE}] (p1_start) at (-7.5, -1) {\textsc{S} }; 
        \node[task={5cm}{catHYDROCHLOROTHIAZIDE}, right=0.5cm of p1_start] (p1_hctz) {\textsc{hctz} }; 
        \node[task={3cm}{catSIMVASTATIN}, right=0.8cm of p1_hctz] (p1_simva) {\textsc{simva} }; 
        \node[task_dark={0.2cm}{catINTERFACE}, right=0.5cm of p1_simva] (p1_end) {\textsc{F} };

        \node[task_dark={0.2cm}{catINTERFACE}, right=1cm of p1_end] (p2_start) {\textsc{S} };
        \node[task={4cm}{catLISINOPRIL}, right=0.5cm of p2_start] (p2_lisino) {\textsc{lisino} }; 
        \node[task_dark={0.2cm}{catINTERFACE}, right=0.8cm of p2_lisino] (p2_end) {\textsc{F} };
        \node[right=0.5cm of p2_end] (m0_end) {$\dots$};

        % Mover 1
        \node[task_dark={0.2cm}{catINTERFACE}, below=3.5cm of p1_start] (p3_start) {\textsc{S} };
        \node[task={3cm}{catSIMVASTATIN}, right=0.5cm of p3_start] (p3_simva) {\textsc{simva} }; 
        \node[task={3cm}{catLISINOPRIL}, right=4.8cm of p3_simva] (p3_lisino) {\textsc{lisino} }; 
        \node[task_dark={0.2cm}{catINTERFACE}, right=0.5cm of p3_lisino] (p3_end) {\textsc{F} };
        \node[right=0.5cm of p3_end] (m1_end) {$\dots$};

        % MOVER WRAPPERS (Yellow) - Labels moved ABOVE
        \begin{pgfonlayer} {background} 
            \node[wrapper=yellow, fit=(p1_start) (m0_end), inner sep=18pt] (wrap1) {}; 
            \node[anchor=south west, font=\large] at (wrap1.north west) {$\texttt{mover\_wrap}_{m_0}$};
            
            \node[wrapper=yellow, fit=(p3_start) (m1_end), inner sep=18pt] (wrap2) {};
            \node[anchor=south west, font=\large] at (wrap2.north west) {$\texttt{mover\_wrap}_{m_1}$};
        \end{pgfonlayer}

        % ORDER WRAPPERS (Gray)
        \begin{pgfonlayer} {inner_background} 
            \node[wrapper=gray, fit=(p1_start) (p1_end), inner sep=5pt, label={[font=\large, xshift=-3cm]north:$\texttt{order\_wrap}_{m_0,p_0}$}] (mvar1) {}; 
            \node[wrapper=gray, fit=(p2_start) (p2_end), inner sep=5pt, label={[font=\large, xshift=-1cm]north:$\texttt{order\_wrap}_{m_0,p_5}$}] (mvar2) {}; 
            \node[wrapper=gray, fit=(p3_start) (p3_end), inner sep=5pt, label={[font=\large, xshift=-4cm]north:$\texttt{order\_wrap}_{m_1,p_6}$}] (mvar3) {};
        \end{pgfonlayer}

        % Identical Tiles Sequence (Orange)
        \node[wrapper=orange, minimum height=0.8cm, text width=6cm, fill=orange!10, font=\footnotesize] (identical_tiles_1)
        at ($(p1_end.south east)!0.5!(p2_end.south west) + (0,-1.7cm)$) {$\texttt{wrapper\_identical\_tiles}_{1\times1}$};

        % Sequence linkages
        \draw (p1_start.east) -- (p1_hctz.west);
        \draw (p1_hctz.east) -- (p1_simva.west); 
        \draw (p1_simva.east) -- (p1_end.west);
        \draw (p1_end.east) -- (p2_start.west);
        \draw (p2_start.east) -- (p2_lisino.west);
        \draw (p2_lisino.east) -- (p2_end.west); 
        \draw (p3_start.east) -- (p3_simva.west);
        \draw (p3_simva.east) -- (p3_lisino.west); 
        \draw (p3_lisino.east) -- (p3_end.west);

        % Identical Tiles Lines
        \draw (p2_lisino.south) -- ++(0,-0.6) -| (identical_tiles_1.north);
        \draw (p3_lisino.north) -- ++(0,0.6) -| (identical_tiles_1.south);

        % --- Overall Frame for the Schedule part ---
        \node[overall_frame, fit=(wrap1) (wrap2)] (output_frame) {};
        
        % Output Caption - Moved ABOVE the yellow wrappers inside the frame
        \node[anchor=north east, font=\Large\bfseries] at (output_frame.north east) [xshift=-0.5cm, yshift=-0.2cm] {Output: Optimized Schedule};

        % Arrow from Scheduler to Output Frame
        \draw[arrow] (scheduler_process.south) -- (output_frame.north);

    \end{tikzpicture}
    \caption[Illustration of the scheduler’s optimized schedule]{The scheduler maps input operations (left) to an optimized assigned schedule (right). Depicted intervals represent chosen operation alternatives for movers $m_0$ and $m_1$. This schedule fulfills three orders via $\texttt{order\_wrap}_{m_0,p_0}$, $\texttt{order\_wrap}_{m_0,p_5}$, and $\texttt{order\_wrap}_{m_1,p_6}$. Both \textsc{lisinopril} operations are grouped under $\texttt{wrapper\_identical\_tiles}_{1 \times 1}$, indicating both were dispensed from the same dispenser at tile (1,1). The two \textsc{simvastatin} operations are not grouped in such a way, indicating each was filled at a different dispenser.}
    \label{fig:sched:variables_showcase}
\end{figure}
 
\subsection{Constraint Programming Model Constraints}\label{sec:scheduling_constraints}
The optimal allocation of \emph{dispensing operations} $t \in \tasks$ to movers lies at the core of this scheduling problem. The objective is to assign operations to movers so that the total execution time is minimal while adhering to several vital constraints.

\subsubsection{Alternative Movers and Dispensers}
For every operation $t \in \tasks$, we define $\texttt{task\_var}_t$ as a \textit{master} interval variable. For each feasible (mover, tile) execution option, we have defined optional \textit{alternatives} $\texttt{alt\_task}_{m,t,h}$. For every $t' \in \tasks$, we link the \textit{master} variable $\texttt{task\_var}_{t'}$ to its alternatives using
\begin{equation}
\textsc{alternative}\left(\texttt{task\_var}_{t'}, \{\texttt{alt\_task}_{m,t,h}\mid \forall (m,t,h) \in \taskAlternatives, t= t'\}\right).
\label{eq:alternative}
\end{equation}
This guarantees that exactly one alternative is chosen for the given $\texttt{task\_var}_{t'}$.

\subsubsection{No Overlap of Mover Activities}\label{distMatrixDef}
Our initial step in model construction is to prohibit movers from using multiple tiles at the same time, i.e., a single mover cannot be in multiple physical locations at once. 

We enforce no overlap among all interval variables targeting mover $m \in \movers$. Furthermore, we enforce travel times (i.e., gaps in the schedule) between any two interval variables $\texttt{alt\_task}_{m,t,h}$ and $\texttt{alt\_task}_{m',t',h'}$, $\forall (m,t,h), (m',t',h') \in \taskAlternatives$, equal to the physical distance between the corresponding dispensers' positions. The distance matrix \distMatrix{} captures the travel time between tiles $h,h' \in \layout$, defined as:
\begin{equation}
    \distMatrix[(m,t,h),(m',t',h')] = \|h - h'\|_1.
\end{equation}

We pass \distMatrix{} to the model as part of \textsc{no\_overlap} constraint to enforce the travel times:
\begin{equation}
    \textsc{no\_overlap}\left(\{\texttt{alt\_task}_{m,t,h} \mid \forall (m,t,h) \in \taskAlternatives, m = m' \}\}, \distMatrix \right) \quad \forall m' \in \movers.
\end{equation}

\subsubsection{Order Completion}
Ensuring non-overlapping execution of dispensing operations is a necessary condition but not sufficient. 
We also have to enforce that no two dispensing operations of different orders interleave; an order must be completed before starting a new one.

For each order $p' \in \orders$ and for each mover $m' \in \movers$, we group the corresponding optional interval variables into a wrapper $\texttt{order\_wrap}_{p',m'}$, defined as:
\[
\texttt{order\_wrap}_{p',m'} = \{ \texttt{alt\_task}_{m,t,h} \mid (m,t,h) \in \taskAlternatives,m=m', p' \in t \}.
\]

Then, for each mover $m' \in \movers$, we group all order wrappers $\texttt{order\_wrap}_{p,m'} $ for all $p \in \orders$ into a $\texttt{mover\_wrap}_{m'}$:
\[
\texttt{mover\_wrap}_{m} = \{ \texttt{order\_wrap}_{p,m} \mid \forall p \in \orders \} \quad \forall m \in \movers.
\]
This hierarchical structure is visualized in Figure~\ref{fig:sched:variables_showcase}. For instance, mover $m_0$ is assigned orders $p_0$ and $p_5$, thus the top-level wrapper $\texttt{mover\_wrap}_{m_0}$ contains the individual order wrappers $\texttt{order\_wrap}_{m_0,p_0}$ and $\texttt{order\_wrap}_{m_0,p_5}$.

As seen in Figure~\ref{fig:sched:variables_showcase}, each $\texttt{order\_wrap}$ wrapper begins with a $\textsc{start}$ operation and concludes with a $\textsc{finish}$ operation. To ensure a feasible ordering, we must guarantee that all dispensing interval variables occur strictly between the $\textsc{start}$ and $\textsc{finish}$ intervals.
To formalize this, we first define three subsets of alternative tasks for each order $p \in \orders$ and mover $m \in \movers$. Let $\mathcal{A}\mathit{lt}^{\textsc{start}}_{p,m}$ be the set of operations representing loading and empty cartridge, $\mathcal{A}\mathit{lt}^{\textsc{finish}}_{p,m}$ be the set of operations representing unloading a completed cartridge, and let $\mathcal{A}\mathit{lt}^{\textsc{dispensing}}_{p,m}$ be the set of operations representing drug dispensing:
\begin{align*}
    \mathcal{A}\mathit{lt}^{\tau}_{p,m} &= \left\{ \texttt{alt\_task}_{m,t,h} \mid (m,t,h) \in \taskAlternatives, \ p \in t \land \tau \in t \right\}\quad \forall \tau \in \{\textsc{start},\textsc{dispensing}, \textsc{finish}\}.
\end{align*}
We then apply a constraint that ensures every possible \textsc{start} operation begins before any dispensing operation. This guarantees that regardless of which interface is selected, the initialization always precedes any dispensing:
\begin{equation}
    \begin{aligned}
        \textsc{start\_before\_start}\left( a_{start}, a_{disp} \right), \\
        \forall p \in \orders, \forall m \in \movers,
        \forall a_{start} \in \mathcal{A}\mathit{lt}^{\textsc{start}}_{p,m}, \forall a_{disp} \in \mathcal{A}\mathit{lt}^{\textsc{dispensing}}_{p,m}
    \end{aligned}
\end{equation}

Similarly, we must enforce that every dispensing operation ends before the \textsc{finish} operation ends. This prevents any drug filling task from occurring after the order wrapper has supposedly finished:
\begin{equation}
    \begin{aligned}
        \textsc{end\_before\_end}\left( a_{disp}, a_{finish} \right), \\
        \forall p \in \orders, \forall m \in \movers, 
        \forall a_{disp} \in \mathcal{A}\mathit{lt}^{\textsc{dispensing}}_{p,m}, \forall a_{finish} \in \mathcal{A}\mathit{lt}^{\textsc{finish}}_{p,m}
    \end{aligned}
\end{equation}

This yields a result comparable to what we would obtain using a \textsc{span} constraint. 
Finally, we make sure that \emph{dispensing operations} for different orders assigned to the same mover do not overlap:
\begin{equation}
    \textsc{no\_overlap}\left(\texttt{mover\_wrap}_m\right)\quad \forall m \in \movers.
\end{equation}

This not only enforces non-overlapping execution, but also guarantees full separation between orders: a mover must complete all operations for an order $p$ before beginning any loading/dispensing operations for another order $p'$.

Additional mechanisms are required to ensure that the previous constraint operates as intended. In its current formulation, each mover wrapper contains order wrappers corresponding to all orders. The following technique ensures that an order wrapper is instantiated only when the corresponding mover–order assignment alternative is selected (i.e., if order 3 is assigned to mover 2, then only the wrapper $\texttt{order\_wrap}_{3,2}$ is instantiated and placed within $\texttt{mover\_wrap}_2$).

For each order $p \in \orders$, we define
\[
u_{t',m'} = \sum_{a \in B} \textsc{presence}(a) \qquad \forall t' \in \tasks_{p},\forall m' \in \movers, B = \left\{\texttt{alt\_task}_{t,m,h}\mid (t,m,h) \in \taskAlternatives, t=t', m=m'\right\}.
\]

For each operation $t \in \tasks_p$ and mover $m \in \movers$, the variable $u_{t,m}$ reflects whether mover $m$ executes operation $t$: it equals $1$ only for the mover assigned to that operation and $0$ otherwise.

Let $t_1$ be the first operation in $T_p$. By enforcing $u_{p,m} = u_{t_1,m}$ we propagate the existing mover assignment information from the operation-level variables $u_{t,m}$ to a single representative variable $u_{p,m}$ for the entire order $p$ --- $u_{p,m}$ will be equal to $1$ only for the mover executing the order and $0$ for all others.

We then make sure $order\_wrap_{p,m}$ is present only when $u_{p,m}$ is $1$:
\[
\textsc{presence}(\texttt{order\_wrap}_{p,m}) = u_{p,m}.
\]

\subsubsection{No Overlap over Dispensing Activities}
Every dispenser must also allow at most one operation at any given moment. To achieve that, for any given tile $h' \in \layout$, we create a wrapper $\texttt{sequence\_identical\_tiles}_{h'}$ of $\texttt{alt\_tasks}_{m,t,h}, \forall m \in \movers, \forall t \in \tasks, h=h'$. Figure~\ref{fig:sched:variables_showcase} shows an example of this with $\texttt{sequence\_identical\_tiles}_{1 \times 1}$. This wrapper groups interval variables across different orders ($p_5, p_6$) and movers ($m_0, m_1$) that all require the same dispenser tile. We ensure the tile is not used concurrently by multiple movers with another \textsc{no\_overlap} constraint:
\begin{equation}
    \textsc{no\_overlap}\left(\texttt{wrapper\_identical\_tiles}_h\right)\quad \forall h \in \layout.
\end{equation}
Finally, the model's objective is to minimize the latest finish time across all interval variables:
\begin{equation}
    \min \max \left\{\textsc{end\_of}(\texttt{task\_var}_t) \mid \forall t \in \tasks\right\}.
\end{equation}
This completes the Constraint Programming formulation for the core scheduling problem, which assigns dispensing operations to movers and determines their order. 
The model captures all hard constraints---mover, tile, and dispenser exclusivity, travel times between dispensers, while intentionally relaxing conflict resolution among movers sharing pathways.
In the following subsection, we present a calculation  of a lower bound on \textsf{Scheduling}.

\subsection{Lower Bound: Parallel Movers Using SHPPN} \label{sec:lowerbound}
We propose a lower bound on the optimal makespan with the two goals: (a)~to estimate the quality of the solution of \textsf{Scheduling}, and (b)~to provide a warm start solution for the solver.
The main idea of the lower bound consists of relaxing the overlaps on dispensers and the travel times between interfaces.
On the other hand, we include travel times between the dispensers themselves.
This is achieved by constructing an instance of a TSP with neighborhoods (TSPN) for each order $p\in\orders$.
Subsequently, the results from TSPN would be used as constants in the parallel machine scheduling problem $P||C_\text{max}$, with jobs having processing times set to the lengths of the optimal tours found by TSPN and a set of resources corresponding to the available movers.
Let
\begin{align}
    \shortestPathPatient =  \min_{\permElement \in \interfaces \times \Pi(p) \times \interfaces} \sum^{|\permElement|-1}_{j=1} \tilesDistance{j} \label{eq:gtsp_lb}
\end{align}
be the length of the shortest path between some two interface locations that visits all dispensers $\tasksForPatient{p}$ required for order $p$.
Then we define
\begin{equation}
    \patientPathTime = 2\trayOperation + \shortestPathPatient + \sum_{(g,\Delta^p_g)\in p} \Delta^p_g,
\end{equation}

denoting the lower bound on the total processing time for order $p$, where \trayOperation is the time of releasing/collecting an empty cartridge, \shortestPathPatient is the shortest possible traveling time of the mover performing order $p$, and $\Delta^p_g$ is the time to dispense drug $g$ in customer order $p$.

% \tn{trosku sjednotit terminologii s tim co bude v placementu}
The values $\patientPathTime$ can be computed by solving one instance of the SHPPN for each customer order $p$ separately.
% We speak about SHPN rather than TSPN at this point, since the mover can start and finish at different interface locations $\interfaces$.
However, SHPPN can be reduced to the TSPN problem and subsequently reduced to standard TSP using the reduction of~\cite{noon1993efficient} with the introduction of only a single additional vertex.
Since the instance of the SHPPN typically involves only small graphs (e.g., $\approx 30$ vertices since a typical order might consist of 4 drugs plus additional start and finish interface locations, each with approximately 5 alternatives), the resulting instances of standard TSP can be solved efficiently by, e.g., lazy subtour elimination in a fraction of a second.

With the values of \patientPathTime, the lower bound is computed as follows.
Let us define an instance of the $P||C_{max}$ problem (i.e., scheduling on parallel identical machines with makespan minimization) with \numMovers resources and a set of $n$ jobs with processing times $\{\patientPathTime[1], \ldots, \patientPathTime[|\orders|]\}$.
We claim that $P,\numMovers|\{\patientPathTime\}^n_{i=1}|C_{max}$ acts as a lower bound on schedule makespan.

% \tn{tady jsem skoncil}
\begin{proposition}[SHPPN lower bound]
    Optimal solution of $P,\numMovers|\{\patientPathTime\}^n_{i=1}|C_{max}$ is a lower bound on \textsf{Scheduling}.
\end{proposition}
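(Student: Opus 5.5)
Take an arbitrary feasible schedule $\schedule$ of \textsf{Scheduling} (in particular an optimal one) with makespan $C(\schedule)$. From it we will build a feasible schedule of the instance $P,\numMovers|\{\patientPathTime\}^n_{i=1}|C_{max}$ whose makespan is at most $C(\schedule)$. The optimum of $P||C_{max}$ is at most the makespan of any of its feasible solutions, so it is also at most the optimal makespan of \textsf{Scheduling}.

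\emph{Per-order bound.} Fix an order $p$ and let $m$ be the mover that $\schedule$ assigns to it; the take-give structure of $\texttt{mover\_wrap}_m$ makes this mover unique. Let $S_p$ be the start of the \textsc{start} operation of $p$ and $E_p$ the end of its \textsc{finish} operation. Inside $[S_p,E_p]$ the mover executes, without overlap, the \textsc{start} operation at some interface, every dispensing operation of $p$ (each at some dispenser of the right drug), and the \textsc{finish} operation at some interface. Sort these operations by start time. This order is a sequence $\permElement\in\interfaces\times\Pi(p)\times\interfaces$, after merging repeated visits to one tile, which only shortens the path. By the \textsc{no\_overlap} constraint with matrix \distMatrix, consecutive operations are separated by at least the $\ell_1$ distance between their tiles. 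Summing durations and gaps gives
\[
E_p-S_p \;\ge\; 2\trayOperation+\sum_{(g,\Delta^p_g)\in p}\Delta^p_g+\sum_j \tilesDistance{j}\;\ge\;\patientPathTime,
\]
where the last inequality holds because \shortestPathPatient is the minimum in \eqref{eq:gtsp_lb}.

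\emph{Assembling the bound.} Orders served by the same mover $m$ occupy pairwise disjoint intervals $[S_p,E_p]$, again by the \textsc{no\_overlap} on $\texttt{mover\_wrap}_m$. All these intervals lie in $[0,C(\schedule)]$. Hence $\sum_{p\to m}\patientPathTime\le C(\schedule)$ for every mover. Now place the orders of each mover back to back on machine $m$ of the $P||C_{max}$ instance, with processing times $\patientPathTime$. Each machine then finishes no later than $C(\schedule)$, which proves the claim.

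\emph{Main obstacle.} The delicate step is turning the executed sequence into an element of $\interfaces\times\Pi(p)\times\interfaces$ that is admissible in \eqref{eq:gtsp_lb}. Preemption may split one dispensing operation into several visits, and one tile may serve several drugs. Splitting only adds travel and never reduces dispensing time, and visits at a shared tile have zero distance between them. So we keep one representative visit per required dispenser and use the triangle inequality of $\ell_1$ to show that shortcutting does not increase length. One further point needs checking: the minimum in \eqref{eq:gtsp_lb} must range over sequences that visit one alternative per drug, so that the path actually used by the schedule is a candidate.
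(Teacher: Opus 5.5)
Your argument is correct and is the relaxation idea the paper only sketches: the $P||C_{max}$ instance drops exclusivity at dispensers, tiles and interfaces, and drops the interface-to-interface travel between consecutive orders. You make it rigorous by bounding each order's window on its mover, $E_p-S_p\ge\eta'_p$, and using disjointness of these windows to obtain a machine assignment with load at most the makespan. The point you flag about $\Pi(p)$ is genuinely needed, and it is settled by the paper's intended neighborhood (generalized TSP, Noon--Bean) reading with one alternative per drug; under the literal wording ``permutations of all alternatives'' the bound would fail, because $\kappa_p$ would charge travel to dispensers the schedule never visits.
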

Without detailing the proof, the main idea of SHPPN lower bound is that the optimal solution of $P,\numMovers|\{\patientPathTime\}^n_{i=1}|C_{max}$ satisfies all constraints on \textsf{Scheduling}, detailed in Section~\ref{sec:scheduling_constraints}, except it relaxes on: (i)~the overlap at dispensers, tiles and interfaces locations, and (ii)~the travel time between two interface locations where a mover finishes with a completed order and the following interface where it starts with the next order (i.e., instantaneous teleportation between interface locations). 

Regarding the complexity of computing the SHPPN lower bound, it is known that $P||C_{max}$ is (weakly) \textsf{NP}-hard already for two machines (i.e., movers) and strongly \textsf{NP}-hard for an unbounded number of machines.
Computing values of $\patientPathTime$ for the realistic scale of the problem can be done efficiently in practice with an existing TSP solver.
Therefore, we deem that the lower bound is computationally practical.
Additionally, we also use the solution computed by the lower bound procedure to warm-start the Constraint Programming model for the \textsf{Scheduling} problem by supplying mover-order assignments, which accelerate its solution.

\FloatBarrier
\section{Routing Problem}\label{sec:routing}

The objective of the \textsf{Routing} problem is to construct feasible $\ell_1$ paths for each mover $m \in \movers$ based on the pre-determined schedule of dispensing operations. Our routing focuses on resolving \emph{conflicts}, defined here as simultaneous tile occupancy. Unlike classical MAPF, our routing has to precisely follow the schedule. Furthermore, spatial overlap of two movers on one tile is permissible provided no dispensing occurs. If during dispensing on one mover another has to travel through the same tile, the dispensing is paused. Since standard MAPF solvers cannot accommodate these context-dependent rules, we employ a tailored iterative heuristic.

The algorithm begins by generating fixed \emph{resting sites} --- locations where movers can wait without obstructing access to dispensers (Section~\ref{routing:resting_sites_generation}). In each iteration, an ILP assigns idle movers to these sites (Section~\ref{routing:resting_sites_assignment}), and a polynomial-time algorithm constructs least-congested shortest paths between operations and resting spots. These paths are used to detect conflicts. Although rare, if any arise, the schedule is revised and the process repeats if needed.

\subsection{Resting Sites Generation}\label{routing:resting_sites_generation}
Movers may experience idle times between operation executions due to dispenser unavailability. To manage these times efficiently and maintain system throughput, we assign \emph{resting sites} --- specific locations where movers can remain stationary without interfering with dispensing operations on adjacent tiles.

We consider a fixed dispenser nozzle at the center of each tile, which enables cooperative operation by placing resting sites at tile intersections. This layout allows a resting mover (represented in gray in Figure~\ref{fig:routing:mover_rotation}) to remain idle while another mover utilizes an adjacent tile for transit or dispensing. To maintain clearance for these resting movers at the intersections, the strategy leverages the mover’s rotational capabilities: after filling the capsules on one side of the cartridge (purple side, Figure~\ref{fig:routing_mover_rotation_a}), the mover shifts slightly to rotate $180^\circ$ (depicted in Figure~\ref{fig:routing_mover_rotation_b}) and fill the other side (green side, Figure~\ref{fig:routing_mover_rotation_c}), which together comprise the full cartridge. By utilizing this rotation instead of traversing the entire tile to fill every capsule, the mover minimizes its footprint (the patterned area in Figure~\ref{fig:routing:mover_rotation}), thereby preserving the necessary clearance  for half a resting site at the boundary.
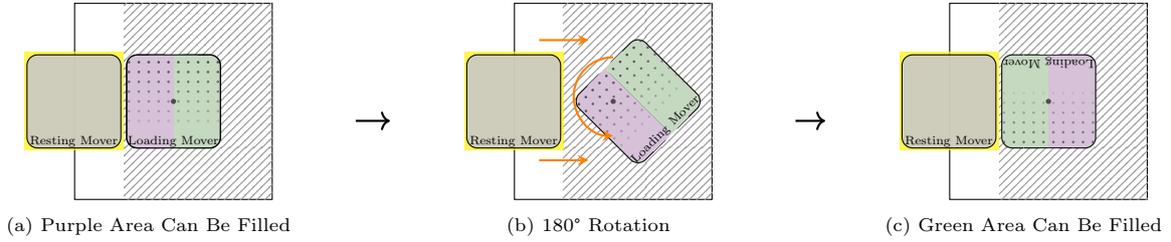
\begin{figure}[H]
    \centering
    % DEFINE SHARED STYLES AND COLORS HERE
    \tikzset{
        every node/.style={font=\small},
        reachafilled/.style={draw,rectangle,minimum width=3cm,minimum height=4cm, draw=none, pattern=north east lines, pattern color=ptrn},
        rest_site/.style={draw,rectangle,minimum width=2cm,minimum height=2cm,fill=yellow, opacity=0.7, draw=none},
        mover/.style={draw,rectangle,minimum width=1.9cm,minimum height=1.9cm,fill=mvr, fill opacity=0.9,rounded corners,},
        disp_tile/.style={draw,rectangle,minimum width=4cm,minimum height=4cm},
        top and bottom/.style={
        path picture={
            \draw (path picture bounding box.north west) -- (path picture bounding box.north east);
            \draw (path picture bounding box.south west) -- (path picture bounding box.south east);
        }
    }
    }
    \definecolor{unfilled}{HTML}{D6C1D9}
    \definecolor{filled}{HTML}{c5d9c1}
    \definecolor{ptrn}{HTML}{969696}
    \definecolor{mvr}{HTML}{c9c9c9}

    % ================= SUBFIGURE 1: Initial State =================
    \begin{subfigure}[c]{0.3\textwidth}
        \centering
        % Scale is applied here
        \begin{tikzpicture}[scale=0.65, transform shape]
            % Tile
            \node (tile_1) [disp_tile] {};
            
            % Reachable Zone
            \node (reach) [reachafilled, xshift=0.5cm] at (tile_1){};
            
            % Resting site & Mover
            \node (rest_site_1) [rest_site, left = -1cm of tile_1] {};
            \node (resting_mover) [mover, label={[xshift=0cm, yshift=-2cm, font=\scriptsize,black]Resting Mover}] at (rest_site_1) {};

            % Loader Mover (Straight)
            \node[mover, fill=filled, opacity=1,minimum width=0.95cm,xshift=0.48cm] (load_1_l) at (tile_1) {};
            \node[mover, fill=unfilled, opacity=1, minimum width=0.95cm,xshift=-0.47cm] (load_1_r) at (tile_1) {};
            % Cover middle
            \node[draw, rectangle, fill=filled, minimum height=1.9cm, minimum width = 0.5cm,draw=none,xshift=0.25cm] at (tile_1){};
            \node[draw, rectangle, fill=unfilled, minimum height=1.9cm, minimum width = 0.5cm,draw=none,xshift=-0.25cm] at (tile_1){};
            \node[mover, fill=none,label={[xshift=0cm, yshift=-2cm, font=\scriptsize,black]Loading Mover}] (load_1) at (tile_1){};

            % Capsules
            \foreach \i in {0,...,9} { \foreach \j in {0,...,6} {
                \pgfmathsetmacro{\opacity}{1 - \j * 0.13}
                \fill[black!70, opacity=\opacity] ($(tile_1.center) + (-0.9cm + \i*0.2cm, 0.8cm - \j*0.2cm)$) circle (0.75pt);
            }}

            % Nozzle
            \fill[black!70] ($(tile_1.center)$) circle (1.5pt);
            
        \end{tikzpicture}
        \caption{Purple Area Can Be Filled}
        \label{fig:routing_mover_rotation_a}
    \end{subfigure}
    \hfill
    \begin{tikzpicture}[scale=0.75, transform shape, baseline=-0.5ex]
         \draw[->,thick] (0,0) -- (0.6cm,0cm);
    \end{tikzpicture}%
    \hfill
    % ================= SUBFIGURE 2: Middle State (Rotated) =================
    \begin{subfigure}[c]{0.3\textwidth}
        \centering
        \begin{tikzpicture}[scale=0.65, transform shape]
            \node (tile_2) [disp_tile] {};
            \node (reach2) [reachafilled, xshift=0.5cm] at (tile_2){};
            \node (rest_site_2) [rest_site, left = -1cm of tile_2] {};
            \node (resting_mover_2) [mover, label={[xshift=0cm, yshift=-2cm, font=\scriptsize]Resting Mover}] at (rest_site_2) {};

            % Center point for rotation
            \coordinate (rot_center) at ($(tile_2.center) + (0.5cm, 0cm)$);

            % --- ROTATED SECTION ---
            % FIX: Removed 'rotate around' from a surrounding scope.
            % Applied rotation directly to nodes relative to (rot_center).

            % Loader parts rotated 45 degrees
            \node[mover, fill=filled,opacity=1, rotate around={45:(rot_center)}, minimum width=0.95cm] (load_2_l) at ($(rot_center) + (0.48cm,0)$) {};
            \node[mover, fill=unfilled,opacity=1, rotate around={45:(rot_center)}, minimum width=0.95cm] (load_2_r) at ($(rot_center) + (-0.47cm,0)$) {};

            % Covers rotated 45 degrees
            \node[draw, rectangle, rotate around={45:(rot_center)}, fill=filled, minimum height=1.9cm, minimum width = 0.5cm,draw=none] at ($(rot_center) + (0.25cm,0)$){};
            \node[draw, rectangle, rotate around={45:(rot_center)}, fill=unfilled, minimum height=1.9cm, minimum width = 0.5cm,draw=none] at ($(rot_center) + (-0.25cm,0)$){};

            % Label rotated 45 degrees
            \node[font=\scriptsize,opacity=1,rotate around={45:(rot_center)},yshift=-0.8cm] (load_2) at (rot_center){Loading Mover};

           % --- ROTATED CAPSULES ---
            % We use a scope to rotate the coordinate system around the same center
            \begin{scope}[rotate around={46:($(rot_center) + (-0.87cm, 0.82cm)$) }]
            \foreach \i in {0,...,9} { 
                \foreach \j in {0,...,6} {
                     \pgfmathsetmacro{\opacity}{1 - \j * 0.13}
                     % Note: We calculate positions relative to (rot_center) inside the rotated scope
                     \fill[black!70, opacity=\opacity] 
                         ($(rot_center) + (-0.87cm + \i*0.2cm, 0.82cm - \j*0.2cm)$) 
                         circle (0.75pt);
                }
            }
            \end{scope}

            % Rotation arrows
            \draw[->,thick,>=stealth,orange]  ($(tile_2.center) + (-1.5cm,1.25cm)$) -- ($(tile_2.center) + (-0.5,1.25cm)$) ;
            \draw[->,thick,>=stealth,orange]  ($(tile_2.center) + (-1.5cm,-1.2cm)$) -- ($(tile_2.center) + (-0.5,-1.2cm)$) ;
            \draw[->,thick,>=stealth,orange]  ($(tile_2.center) + (0,0.9cm)$) arc (90:270:0.8);

            % Nozzle
            \fill[black!70] ($(tile_2.center)$) circle (1.5pt);
            
        \end{tikzpicture}
        \caption{180° Rotation}
        \label{fig:routing_mover_rotation_b}
    \end{subfigure}
    \hfill
    % ARROW 1
    % Added baseline=-0.5ex to center vertically, removed yshift
    \begin{tikzpicture}[scale=0.65, transform shape, baseline=-0.5ex]
         \draw[->,thick] (0,0) -- (0.6cm,0cm);
    \end{tikzpicture}%
    \hfill
    % ================= SUBFIGURE 3: Final State (Flipped) =================
    \begin{subfigure}[c]{0.3\textwidth}
        \centering
        \begin{tikzpicture}[scale=0.65, transform shape]
            \node (tile_3) [disp_tile] {};
            \node (reach3) [reachafilled, xshift=0.5cm] at (tile_3){};
            \node (rest_site_3) [rest_site, left = -1cm of tile_3] {};
            \node (resting_mover_3) [mover, label={[xshift=0cm, yshift=-2cm, font=\scriptsize]Resting Mover}] at (rest_site_3) {};

            \coordinate (rot_center_3) at (tile_3.center);

            % --- ROTATED SECTION (180 degrees) ---
            % FIX: Applied rotation directly to nodes.

            % Loader parts rotated 180 degrees
            % Note: xshift signs flip compared to subfigure 1 because of 180 rotation convention in TikZ relative to placement
            \node[mover, fill=unfilled,opacity=1, rotate around={180:(rot_center_3)}, minimum width=0.95cm] (load_3_l) at ($(rot_center_3) + (-0.48cm,0)$) {};
            \node[mover, fill=filled,opacity=1, rotate around={180:(rot_center_3)}, minimum width=0.95cm] (load_3_r) at ($(rot_center_3) + (0.47cm,0)$) {};

            % Covers rotated 180 degrees

            \node[top and bottom, rectangle, rotate around={180:(rot_center_3)}, fill=unfilled, minimum height=1.9cm, minimum width = 0.5cm] at ($(rot_center_3) + (-0.25cm,0)$){};
            \node[top and bottom, rectangle, rotate around={180:(rot_center_3)}, fill=filled, minimum height=1.9cm, minimum width = 0.5cm] at ($(rot_center_3) + (0.25cm,0)$){};

            % Label rotated 180 degrees
            \node[font=\scriptsize,rotate around={180:(rot_center_3)},yshift=-0.8cm] (load_3) at (rot_center_3){Loading Mover};

             % Capsules
            \foreach \i in {0,...,9} { \foreach \j in {0,...,6} {
                \pgfmathsetmacro{\opacity}{0 + \j * 0.13}
                \fill[black!70, opacity=\opacity] ($(tile_3.center) + (-0.9cm + \i*0.2cm, 0.37cm - \j*0.2cm)$) circle (0.75pt);
            }}

            % Nozzle
            \fill[black!70] ($(tile_1.center)$) circle (1.5pt);
            
        \end{tikzpicture}
        \caption{Green Area Can Be Filled}
        \label{fig:routing_mover_rotation_c}
    \end{subfigure}

    \caption[Visualization of dispensing maneuver]{Visualization of dispensing maneuver. Geometrically, a mover with dimensions $a\times a$  rotates within a disk of radius $R = a/\sqrt{2}$. On a $2a\times 2a$ tile, this leaves free space of $2a - 2R = a(2-\sqrt{2}) > 0.5a$ along the edge, which is sufficient to accommodate half of a resting mover.}
    \label{fig:routing:mover_rotation}
\end{figure}
However, the rotation depicted in Figure~\ref{fig:routing:mover_rotation} imposes a constraint: each dispensing tile supports only one resting site to preserve rotation space for the dispensing mover. Interface tiles are an exception, accommodating two resting sites since movers remain centered during cartridge swaps.

We formulate the problem of finding viable resting sites layout as a Maximum Independent Set problem over a graph (see Figure~\ref{fig:MIS_graph})  constructed from layout $\layout$. In this graph, edges (depicted as red line segments) represent mutual exclusion constraints --- that is, two nodes (potential resting sites, depicted as blue dots) are connected if resting at both simultaneously would cause dispensing inoperability on their common tile. The resulting solution, shown in Figure~\ref{fig:MIS_result}, highlights the selected resting sites \sites using yellow rectangles.
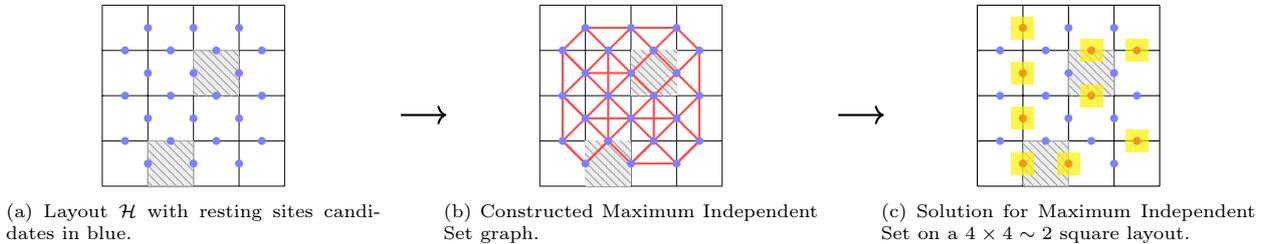
\begin{figure}[H]
    \centering
    % --- GRID 1: Clean grid with interfaces ---
    \begin{subfigure}[b]{0.3\textwidth}
        \centering
        \begin{tikzpicture}[
            dot/.style={circle, fill=blue!50, inner sep=0pt, minimum size=3pt},
            hatched_cell/.style={fill=gray!20, opacity=0.7},
            hatched_pattern/.style={pattern=north west lines, pattern color=gray!70},
            scale=0.6
        ]
        \def\gridmax{4}
        \pgfmathtruncatemacro{\imax}{\gridmax-1}
        
        \draw[step=1, black!90] (0,0) grid (\gridmax,\gridmax);
        
        \fill[hatched_cell] (1,0) rectangle (2,1);
        \fill[hatched_pattern] (1,0) rectangle (2,1);
        \fill[hatched_cell] (2,2) rectangle (3,3);
        \fill[hatched_pattern] (2,2) rectangle (3,3);

        \foreach \x in {0,...,\imax} {
            \foreach \y in {1,...,\imax} {
                \node[dot] at (\x+0.5, \y) {};
            }
        }
        \foreach \x in {1,...,\imax} {
            \foreach \y in {0,...,\imax} {
                \node[dot] at (\x, \y+0.5) {};
            }
        }
        \end{tikzpicture}
        \caption{Layout $\layout$ with resting sites candidates in blue.}
    \end{subfigure}
    \hfill
    % --- ARROW 1 ---
    \begin{minipage}[b]{0.02\textwidth}
        \centering
        \raisebox{1.6cm}{% 
        \begin{tikzpicture}
             \draw[->,thick] (0,0) -- (0.6cm,0cm);
         \end{tikzpicture}
         }%
    \end{minipage}
    \hfill
    % --- GRID 2: With red lines ---
    \begin{subfigure}[b]{0.3\textwidth}
        \centering
        \begin{tikzpicture}[
            dot/.style={circle, fill=blue!50, inner sep=0pt, minimum size=3pt},
            hatched_cell/.style={fill=gray!20, opacity=0.7},
            hatched_pattern/.style={pattern=north west lines, pattern color=gray!70},
            scale=0.6
        ]
        \def\gridmax{4}
        \pgfmathtruncatemacro{\imax}{\gridmax-1}
        
        \draw[step=1, black!90] (0,0) grid (\gridmax,\gridmax);
        
        % Red Graph Lines
        \foreach \x in {1,...,\gridmax} {
            \draw[red!70,thick] (\x - 0.5,1) -- (\x - 0.5,\gridmax-1);
        }
        \foreach \y in {1,...,\gridmax} {
            \draw[red!70,thick] (1,\y-0.5) -- (\gridmax-1,\y-0.5);
        }
        
        \fill[white] (1,0) rectangle (2,1);
        \fill[white] (2,2) rectangle (3,3);
        \fill[hatched_cell] (1,0) rectangle (2,1);
        \fill[hatched_pattern] (1,0) rectangle (2,1);
        \fill[hatched_cell] (2,2) rectangle (3,3);
        \fill[hatched_pattern] (2,2) rectangle (3,3);

        \foreach \x in {1,...,\imax} {
            \foreach \y in {1,...,\imax} {
                \draw[red!70, thick] (\x-0.5, \y) -- (\x, \y-0.5);
                \draw[red!70, thick] (\x, \y+0.5) -- (\x+0.5, \y);
                \draw[red!70, thick] (\x, \y+0.5) -- (\x-0.5, \y);
                \draw[red!70, thick] (\x+0.5, \y) -- (\x, \y-0.5);
            }
        }
        
        \foreach \y in {1,...,\imax} { \draw[red!70, thick] (0.5, \y) -- (1, \y-0.5); }
        \foreach \x in {1,...,\imax} { \draw[red!70, thick] (\x, 0.5) -- (\x+0.5, 1); }
        
        \foreach \x in {0,...,\imax} {
            \foreach \y in {1,...,\imax} { \node[dot] at (\x+0.5, \y) {}; }
        }
        \foreach \x in {1,...,\imax} {
            \foreach \y in {0,...,\imax} { \node[dot] at (\x, \y+0.5) {}; }
        }
        \end{tikzpicture}
        \caption{Constructed Maximum Independent Set graph.}
        \label{fig:MIS_graph}
    \end{subfigure}
    \hfill
    % --- ARROW 2 ---
    \begin{minipage}[b]{0.02\textwidth}
        \centering
        \raisebox{1.6cm}{% 
        \begin{tikzpicture}
             \draw[->,thick] (0,0) -- (0.6cm,0cm);
         \end{tikzpicture}
         }%
    \end{minipage}
    \hfill
    % --- GRID 3: Selected sites ---
    \begin{subfigure}[b]{0.3\textwidth}
        \centering
        \begin{tikzpicture}[
            dot/.style={circle, fill=blue!50, inner sep=0pt, minimum size=3pt},
            yellow_square/.style={fill=yellow, opacity=0.7, draw=none},
            hatched_cell/.style={fill=gray!20, opacity=0.7},
            hatched_pattern/.style={pattern=north west lines, pattern color=gray!70},
            scale=0.6
        ]
        \def\gridmax{4}
        \pgfmathtruncatemacro{\imax}{\gridmax-1}
        \draw[step=1, black] (0,0) grid (\gridmax,\gridmax);
        
        \fill[hatched_cell] (1,0) rectangle (2,1);
        \fill[hatched_pattern] (1,0) rectangle (2,1);
        \fill[hatched_cell] (2,2) rectangle (3,3);
        \fill[hatched_pattern] (2,2) rectangle (3,3);
        
        \foreach \x in {0,...,\imax} {
            \foreach \y in {1,...,\imax} { \node[dot] at (\x+0.5, \y) {}; }
        }
        \foreach \x in {1,...,\imax} {
            \foreach \y in {0,...,\imax} { \node[dot] at (\x, \y+0.5) {}; }
        }
        
        \begin{scope}[yellow_square]
            \foreach \x/\y in {1/3.5, 3.5/3, 1/2.5, 2.5/2, 2.5/3, 1/1.5, 1/0.5, 2/0.5, 3.5/1}
            {
                \fill (\x-0.25, \y-0.25) rectangle (\x+0.25, \y+0.25);
                \node[circle, fill=orange, inner sep=0pt, minimum size=3pt] at (\x, \y) {};
            }
        \end{scope}
        \end{tikzpicture}
        \caption{Solution for Maximum Independent Set on a $4\times 4 \sim 2$ square layout.}
        \label{fig:MIS_result}
    \end{subfigure}

    \caption{The yellow areas represent selected resting sites $\sites$ for the movers out of the candidate positions depicted by the blue dots. Interface tiles (gray, hatched tiles) may contain up to 2 resting sites.}
\end{figure}

\subsection{Resting Sites Assignment}\label{routing:resting_sites_assignment}

The problem of assigning each mover's idle period to a resting site in the schedule $\schedule$ is formulated as a Mixed-Integer Programming model. The model takes as input the set of precomputed resting sites $\sites = \{r_1, r_2, \ldots, r_k\}$, the set of movers $\movers = \{m_1, m_2, \ldots, m_{\numMovers}\}$, and the set of \emph{transits} $\transits = \{w_1, w_2, \ldots, w_l\}$ derived from the schedule \schedule.

A \emph{transit} represents the time gap between two consecutive operations assigned to the same mover in the schedule. Among these, \emph{transits with idle periods} are those whose duration exceeds the travel time required to move between the corresponding operation locations (see Figure~\ref{fig:transits_w}). We populate set $\transits$ only with \emph{transits with idle periods}, as these represent the intervals during which a mover needs a resting site. The objective of the model is to assign each such transit to a resting site in a way that minimizes the additional travel cost for movers to reach and leave the resting sites.

\input{figs/rest_site_gen}

The problem is modeled using binary decision variables $x_{ij} \in \{0, 1\}$, where $x_{ij} = 1$ if transit $w_i \in \transits$ is assigned to resting site $r_j \in \sites$, and $c_{ij}$ denotes the round-trip travel time. The objective is to minimize $\sum_{i,j} c_{ij} x_{ij}$ while assigning each transit to a resting site: $\sum_{j \in \sites} x_{ij} = 1, \quad \forall i \in \transits$ and ensuring at most one mover uses a resting site at any given time: $x_{ij} + x_{i'j} \leq 1, \quad \forall (a_{ij}, a_{i'j}) \in F$. Set $F$ consists of pairs of operations with overlapping time windows.

\subsection{Conflict Resolution}\label{routing:conflicts}

Unlike standard MAPF collisions, our system allows movers to coexist on a tile. A \emph{conflict} occurs specifically when a mover transits through a tile where another is actively dispensing, forcing the operation to pause (see Figure~\ref{fig:routing:mover_rotation}, loading mover has to yield so there is space for transit). Our algorithm detects these interruptions and extends the schedule to ensure the required dispensing duration is met. 

We model the schedule as a directed acyclic graph (DAG) $\Omega = (K \cup \{k_{0}\}, E)$, where vertices $K$ represent operations and $k_{0}$ is a virtual source node. Edges $E$ represent start-to-start precedence constraints:
\begin{enumerate}
    \item Same-mover constraint: Connects sequential operations of a single mover. We include edges $(k_{0}, k)$ for the first operation $k$ of each mover with a weight equal to the start time of $k$. For all other mover edges, the weight $w(k,k') = l_{i} + \Delta^{g'}_{p'} +\| h' - h'' \|_1$ accounts for the interruption time $l_i$, dispensing duration $\Delta^{g'}_{p'}$, and travel time between locations $h'$ and $h''$.
    \item Same-dispenser constraint: Connects operations sharing a dispenser, ordered by their original start times, with a weight of 1.
\end{enumerate}

To adjust the schedule, we calculate the longest path from the source vertex to every vertex. Setting the start time of the source node $S(k_{0}) = 0$, the start time $S(k)$ for all other operations is determined recursively: $S(k) = \max_{(k',k)\in E} \{ S(k') + w(k',k) \}.$
The global makespan is then determined by $\max_{k \in K}\{S(k) + \Delta^{g'}_{p'} \}$. This process is iterative. We recalculate the interruptions and update the new interruption counts using $l_i = \max\{l_i, \text{current interruptions}\}$ and re-optimize the start times if any values have changed. If not, we have reached a conflict-free solution.

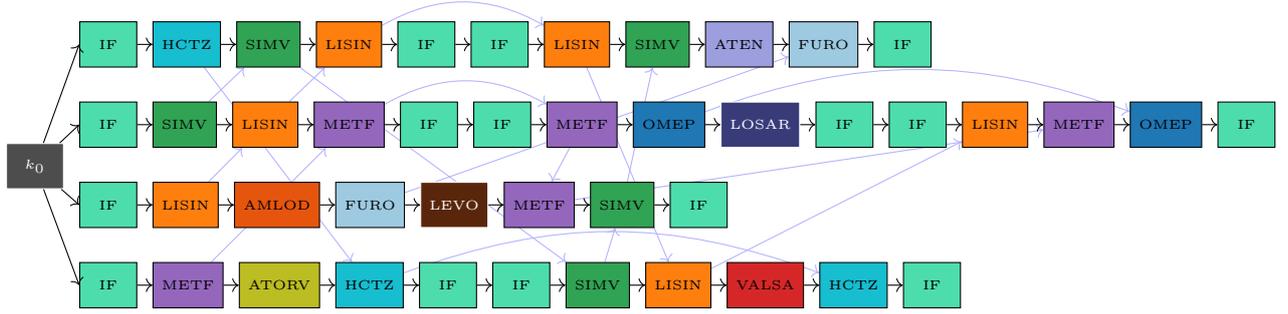
\begin{figure}[t]
	\centering
    \pgfdeclarelayer{background}
    \pgfdeclarelayer{foreground}
    \pgfsetlayers{background,main,foreground}
	\begin{tikzpicture}[
		drugnode/.style={rectangle, draw, minimum width=0.75cm, minimum height=0.6cm, font=\tiny},
        drugnode_l/.style={rectangle, draw, minimum width=0.75cm, minimum height=0.6cm, font=\tiny, white},
		every edge/.style={->, >=stealth, thick}
	]
		% Color definitions
		\definecolor{catOMEPRAZOLE}{HTML}{1F77B4}
		\definecolor{catLEVOTHYROXINE}{HTML}{59260b}
		\definecolor{catVALSARTAN}{HTML}{D62728}
		\definecolor{catMETFORMIN}{HTML}{9467BD}
		\definecolor{catLISINOPRIL}{HTML}{FF7F0E}
		\definecolor{catSIMVASTATIN}{HTML}{31A354}
		\definecolor{catHYDROCHLOROTHIAZIDE}{HTML}{17BECF}
		\definecolor{catLOSARTAN}{HTML}{393B79}
		\definecolor{catAMLODIPINE}{HTML}{E6550D}
		\definecolor{catATORVASTATIN}{HTML}{BCBD22}
		\definecolor{catATENOLOL}{HTML}{9C9EDE}
		\definecolor{catFUROSEMIDE}{HTML}{9ECAE1}
		\definecolor{catVIRTUAL}{HTML}{4D4D4D}
        \definecolor{catINTERFACE}{HTML}{4DDDAD}
        
		% Mover 0 nodes
		\node[drugnode, fill=catINTERFACE] (m0n0) at (0,0) {IF};
		\node[drugnode, fill=catHYDROCHLOROTHIAZIDE, right=0.2cm of m0n0] (m0n1) {HCTZ};
		\node[drugnode, fill=catSIMVASTATIN, right=0.2cm of m0n1] (m0n2) {SIMV};
		\node[drugnode, fill=catLISINOPRIL, right=0.2cm of m0n2] (m0n3) {LISIN};
		\node[drugnode, fill=catINTERFACE, right=0.2cm of m0n3] (m0n4) {IF};
		\node[drugnode, fill=catINTERFACE, right=0.2cm of m0n4] (m0n5) {IF};
		\node[drugnode, fill=catLISINOPRIL, right=0.2cm of m0n5] (m0n6) {LISIN};
		\node[drugnode, fill=catSIMVASTATIN, right=0.2cm of m0n6] (m0n7) {SIMV};
		\node[drugnode, fill=catATENOLOL, right=0.2cm of m0n7] (m0n8) {ATEN};
		\node[drugnode, fill=catFUROSEMIDE, right=0.2cm of m0n8] (m0n9) {FURO};
		\node[drugnode, fill=catINTERFACE, right=0.2cm of m0n9] (m0n10) {IF};

        \node[drugnode_l,fill=catVIRTUAL, below left = 1cm and 0.2cm of m0n0] (virt) {$k_0$};
        
		% Mover 1 nodes
		\node[drugnode, fill=catINTERFACE, below=0.45cm of m0n0] (m1n0) {IF};
		\node[drugnode, fill=catSIMVASTATIN, right=0.2cm of m1n0] (m1n1) {SIMV};
		\node[drugnode, fill=catLISINOPRIL, right=0.2cm of m1n1] (m1n2) {LISIN};
		\node[drugnode, fill=catMETFORMIN, right=0.2cm of m1n2] (m1n3) {METF};
		\node[drugnode, fill=catINTERFACE, right=0.2cm of m1n3] (m1n4) {IF};
		\node[drugnode, fill=catINTERFACE, right=0.2cm of m1n4] (m1n5) {IF};
		\node[drugnode, fill=catMETFORMIN, right=0.2cm of m1n5] (m1n6) {METF};
		\node[drugnode, fill=catOMEPRAZOLE, right=0.2cm of m1n6] (m1n7) {OMEP};
		\node[drugnode_l, fill=catLOSARTAN, right=0.2cm of m1n7] (m1n8) {LOSAR};
		\node[drugnode, fill=catINTERFACE, right=0.2cm of m1n8] (m1n9) {IF};
		\node[drugnode, fill=catINTERFACE, right=0.2cm of m1n9] (m1n10) {IF};
		\node[drugnode, fill=catLISINOPRIL, right=0.2cm of m1n10] (m1n11) {LISIN};
		\node[drugnode, fill=catMETFORMIN, right=0.2cm of m1n11] (m1n12) {METF};
		\node[drugnode, fill=catOMEPRAZOLE, right=0.2cm of m1n12] (m1n13) {OMEP};
		\node[drugnode, fill=catINTERFACE, right=0.2cm of m1n13] (m1n14) {IF};
		
		% Mover 2 nodes
		\node[drugnode, fill=catINTERFACE, below=0.45cm of m1n0] (m2n0) {IF};
		\node[drugnode, fill=catLISINOPRIL, right=0.2cm of m2n0] (m2n1) {LISIN};
		\node[drugnode, fill=catAMLODIPINE, right=0.2cm of m2n1] (m2n2) {AMLOD};
		\node[drugnode, fill=catFUROSEMIDE, right=0.2cm of m2n2] (m2n3) {FURO};
		\node[drugnode_l, fill=catLEVOTHYROXINE, right=0.2cm of m2n3] (m2n4) {LEVO};
		\node[drugnode, fill=catMETFORMIN, right=0.2cm of m2n4] (m2n5) {METF};
		\node[drugnode, fill=catSIMVASTATIN, right=0.2cm of m2n5] (m2n6) {SIMV};
		\node[drugnode, fill=catINTERFACE, right=0.2cm of m2n6] (m2n7) {IF};
		
		% Mover 3 nodes
		\node[drugnode, fill=catINTERFACE, below=0.45cm of m2n0] (m3n0) {IF};
		\node[drugnode, fill=catMETFORMIN, right=0.2cm of m3n0] (m3n1) {METF};
		\node[drugnode, fill=catATORVASTATIN, right=0.2cm of m3n1] (m3n2) {ATORV};
		\node[drugnode, fill=catHYDROCHLOROTHIAZIDE, right=0.2cm of m3n2] (m3n3) {HCTZ};
		\node[drugnode, fill=catINTERFACE, right=0.2cm of m3n3] (m3n4) {IF};
		\node[drugnode, fill=catINTERFACE, right=0.2cm of m3n4] (m3n5) {IF};
		\node[drugnode, fill=catSIMVASTATIN, right=0.2cm of m3n5] (m3n6) {SIMV};
		\node[drugnode, fill=catLISINOPRIL, right=0.2cm of m3n6] (m3n7) {LISIN};
		\node[drugnode, fill=catVALSARTAN, right=0.2cm of m3n7] (m3n8) {VALSA};
		\node[drugnode, fill=catHYDROCHLOROTHIAZIDE, right=0.2cm of m3n8] (m3n9) {HCTZ};
		\node[drugnode, fill=catINTERFACE, right=0.2cm of m3n9] (m3n10) {IF};

         % Virtual edges
        \draw[->] (virt) -- (m0n0.west);
        \draw[->] (virt) -- (m1n0.west);
        \draw[->] (virt) -- (m2n0.west);
        \draw[->] (virt) -- (m3n0.west);
        
		% Mover 0 edges
		\draw[->] (m0n0) -- (m0n1);
		\draw[->] (m0n1) -- (m0n2);
		\draw[->] (m0n2) -- (m0n3);
		\draw[->] (m0n3) -- (m0n4);
		\draw[->] (m0n4) -- (m0n5);
		\draw[->] (m0n5) -- (m0n6);
		\draw[->] (m0n6) -- (m0n7);
		\draw[->] (m0n7) -- (m0n8);
		\draw[->] (m0n8) -- (m0n9);
		\draw[->] (m0n9) -- (m0n10);
		
		% Mover 1 edges
		\draw[->] (m1n0) -- (m1n1);
		\draw[->] (m1n1) -- (m1n2);
		\draw[->] (m1n2) -- (m1n3);
		\draw[->] (m1n3) -- (m1n4);
		\draw[->] (m1n4) -- (m1n5);
		\draw[->] (m1n5) -- (m1n6);
		\draw[->] (m1n6) -- (m1n7);
		\draw[->] (m1n7) -- (m1n8);
		\draw[->] (m1n8) -- (m1n9);
		\draw[->] (m1n9) -- (m1n10);
		\draw[->] (m1n10) -- (m1n11);
		\draw[->] (m1n11) -- (m1n12);
		\draw[->] (m1n12) -- (m1n13);
		\draw[->] (m1n13) -- (m1n14);
		
		% Mover 2 edges
		\draw[->] (m2n0) -- (m2n1);
		\draw[->] (m2n1) -- (m2n2);
		\draw[->] (m2n2) -- (m2n3);
		\draw[->] (m2n3) -- (m2n4);
		\draw[->] (m2n4) -- (m2n5);
		\draw[->] (m2n5) -- (m2n6);
		\draw[->] (m2n6) -- (m2n7);
		
		% Mover 3 edges
		\draw[->] (m3n0) -- (m3n1);
		\draw[->] (m3n1) -- (m3n2);
		\draw[->] (m3n2) -- (m3n3);
		\draw[->] (m3n3) -- (m3n4);
		\draw[->] (m3n4) -- (m3n5);
		\draw[->] (m3n5) -- (m3n6);
		\draw[->] (m3n6) -- (m3n7);
		\draw[->] (m3n7) -- (m3n8);
		\draw[->] (m3n8) -- (m3n9);
		\draw[->] (m3n9) -- (m3n10);

        \begin{pgfonlayer}{background}
    		% HYDROCHLOROTHIAZIDE: m0n1 -> m3n3 -> m3n9
    		\draw[->, color=blue!30] (m0n1) -- (m3n3);
    		\draw[->, color=blue!30,bend left=20] (m3n3) to (m3n9);
    		
    		% SIMVASTATIN: m1n1 -> m0n2 -> m2n6 -> m0n7 -> m3n6
    		\draw[->, color=blue!30] (m1n1) -- (m0n2);
    		\draw[->, color=blue!30] (m0n2) -- (m3n6);
    		\draw[->, color=blue!30] (m3n6) -- (m2n6);
    		\draw[->, color=blue!30] (m2n6) -- (m0n7);
    		
    		% LISINOPRIL: m2n1 -> m0n3 -> m1n2 -> m0n6 -> m3n7 -> m1n11
    		\draw[->, color=blue!30] (m2n1) -- (m1n2);
    		\draw[->, color=blue!30] (m1n2) -- (m0n3);
    		\draw[->, color=blue!30, bend left] (m0n3) to (m0n6);
    		\draw[->, color=blue!30] (m0n6) -- (m3n7);
    		\draw[->, color=blue!30] (m3n7) -- (m1n11);
    		
    		% METFORMIN: m3n1 -> m1n3 -> m1n6 -> m2n5 -> m1n12
            \draw[->, color=blue!30] (m3n1) -- (m1n3);
    		\draw[->, color=blue!30, bend left] (m1n3) to (m1n6);
    		\draw[->, color=blue!30] (m1n6) -- (m2n5);
    		\draw[->, color=blue!30] (m2n5) -- (m1n12);
    		
    		% FUROSEMIDE: m2n3 -> m0n9
    		\draw[->, color=blue!30] (m2n3) -- (m0n9);
    		
    		% OMEPRAZOLE: m1n7 -> m1n13
    		\draw[->, color=blue!30, bend left=20] (m1n7) to (m1n13);
		\end{pgfonlayer}

	\end{tikzpicture}
    \caption[Schedule-to-graph DAG]{DAG generated from the schedule in Figure~\ref{fig:mainExampleSchedule}. Black edges represent same-mover precedence constraints, blue edges represent same-dispenser precedence constraints. Task colors indicate dispenser assignment.}
    \label{fig:complex_dag_schedule}
\end{figure}

\subsection{DAG-based Schedule Merging}\label{batch-heuristic-merging}
For larger order sets, it might be useful to split the set into smaller batches randomly and independently generate sub-schedules for each batch. These sub-schedules are then transformed into DAGs using the same principle and connected together. Specifically, edges are added from the terminal vertices of the preceding DAG to the initial vertices of the new sub-schedule. Edge weights correspond to the minimum required transit times between the last and first tasks of adjacent batches. Additionally, edges are added between the last tasks in the earlier schedule and the first tasks in the new schedule that use the same tile, ensuring potential dispensing conflicts are accounted for. The start times of the new intervals are then initialized by propagating earliest start times through the extended graph, computed via LP or longest-path relaxation over the merged DAG.

\section{Experiments}\label{sec:experiments}
The proposed algorithms allow investors to evaluate potential facility configurations — including tile layouts, dispenser placements, and mover counts — and assess whether a given investment yields sufficient production throughput. This section pursues two aims: validating the proposed methodology and demonstrating that the decomposition introduces no significant loss in solution quality relative to the theoretical lower bound (Section~\ref{exp:lb}). At the tactical level, we evaluate robustness (Section~\ref{exp:robst}) and analyze how tactical decisions propagate to the operational level (Section~\ref{sec:experiments-correlation}). At the operational level, we examine the effect of mover count and time limits on makespan (Sections~\ref{exp:mvc_cmax} and~\ref{expt:tml_cmax}), assess the scalability of DAG-based schedule merging and its effect on solution quality for larger problem instances (Section~\ref{exp:batching}), and quantify the impact of the routing phase on overall system performance (Section~\ref{exp:rting}).

As a basis for our study, we applied data from an analysis of real-world co-prescription patterns within the NHANES drug survey dataset \citep{nchs}. The analysis provided the marginal probabilities and correlation matrix (discussed in \textsf{Packing}, illustrated in Figure B.1 for the top 40 most frequently prescribed drugs. To reflect sufficient system complexity, simulated orders were restricted to contain between 3 and 8 unique medications.

To generate realistic, typical order-sets, we implemented a synthetic data generator that produces multivariate binary patient prescriptions. This system is based on the methodology proposed by \cite{data_generator_bindata}, which transforms samples drawn from a multivariate normal distribution into binary outcomes via marginal thresholding.
The generated instances can be found on GitHub \textsc{[link removed while under review]}. 

\subsection{Hardware and Software Environment}
All computational experiments were conducted on a server equipped with dual Intel Xeon Silver 4110 CPUs @ 2.10GHz, providing a total of 16 physical cores and 32 logical threads. The system is provisioned with 187 GB of RAM. The operating environment is Debian GNU/Linux 13 (trixie).

The algorithms were implemented in Python 3. Mathematical optimization models were solved using commercial solvers, with the environment configured to support Gurobi 12.0.0 and CPLEX 22.1.1.0.
To ensure reproducibility, we provide a summary of the parameters used for our algorithms in Table~\ref{tab:experimental_settings}. The dispensing speed parameter we derived from \cite{sonntag2023method}. As future works are primarily aimed at speeding up the dispensing process, we also evaluated the system's performance with faster dispenser speeds.
\begin{table}[ht]
    \centering
    \begin{tabular}{ll ll}
        \toprule
        \multicolumn{2}{c}{Tactical Level (Placement \& Packing)} & 
        \multicolumn{2}{c}{Operational Level (Scheduling, Routing)} \\
        \cmidrule(r){1-2} \cmidrule(l){3-4} 
        GA Population Size & 150 & 
        Time Limit (Standard) & 600s \\
        GA Max Evaluations & 50,000 & 
        Time Limit (Scalability) & 300s -- 2,000s \\
        GA Episodes & 20 & 
        Standard Dispensing Speed & 100 ticks / cartridge \\
        Dispenser Count & 82 (Fig.~\ref{fig:robustness}), 62 (Fig.~\ref{fig:placement-sched-correlation}) & 
        Fast Dispensing Speed & 10 ticks / cartridge \\
        \bottomrule
    \end{tabular}
    \caption{Summary of experimental parameters.}
    \label{tab:experimental_settings}
\end{table}
\FloatBarrier

\subsection{Robustness of Tactical Level}\label{exp:robst}

To ensure that the placement and packing phases do not suffer from overfitting, where the resulting layout is optimized for a specific sequence of orders and, therefore, performs poorly on others, we evaluated the robustness of the solution. 
The packing and placement phases were run using a training set of 100 generated orders to create a fixed $8 \times 8$ layout. The packing configuration consisted of 82 dispensers, optimized by a genetic algorithm with a population size of 150 and a maximum of 50,000 evaluations. 
Subsequently, to test the robustness of the resulting layout, we executed the scheduling and routing phases using 100 distinct datasets: the original training set and 99 new, unseen validation sets, each consisting of 100 orders. 
For these validation runs, the scheduler used a warm-start initialization and was constrained to a 600-second time limit.
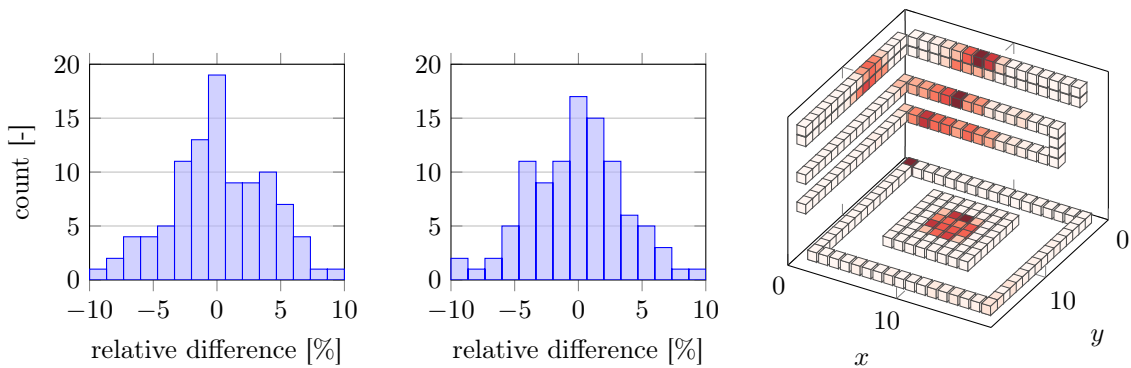
\begin{figure}[ht]
  \centering
  \begin{subfigure}[t]{0.3\textwidth}
    \centering
    \begin{tikzpicture}
      \begin{axis}[
        ybar,
        xlabel={relative difference [\%]},
        ylabel={count [-]},
        xmin=-10, xmax=10,
        ymin=0,ymax=20,
        width=1\linewidth,
        height=0.9\textwidth,
        ymajorgrids=true
      ]
        \addplot+[hist={bins=15, data min=-10, data max=10},fill opacity=0.6] table[y=deviation_pct] {experiments/robustness/deviation_vals_8.csv};
      \end{axis}
    \end{tikzpicture}
    \caption{Relative makespan deviation from the mean across 100 runs, for a layout with 8 movers.}
  \end{subfigure}
  \begin{subfigure}[t]{0.3\textwidth}
    \centering
    \begin{tikzpicture}
      \begin{axis}[
        ybar,
        xlabel={relative difference [\%]},
        xmin=-10, xmax=10,
        ymin=0,ymax=20,
        width=\linewidth,
       height=0.9\textwidth,
       ymajorgrids=true
      ]
        \addplot+[hist={bins=15, data min=-10, data max=10},fill opacity=0.6] table[y=deviation_pct] {experiments/robustness/deviation_vals_12.csv};
      \end{axis}
    \end{tikzpicture}
    \caption{Relative makespan deviation from the mean across 100 runs, for a layout with 12 movers.}
  \end{subfigure}
  \begin{subfigure}[t]{0.38\textwidth}
      \begin{tikzpicture}
    \begin{axis}[
        view={120}{40},
        width=165pt,
        height=165pt,
        grid=major,
        z buffer=sort,
        xmin=0,xmax=17,
        ymin=0,ymax=17,
        zmin=0,zmax=3,
        enlargelimits=upper,
        ztick=\empty,
        % Set limits wide enough to show all combined geometries
        % xmin=-32, xmax=32, 
        % ymin=-15, ymax=15,
        % zmin=0, zmax=3,
        grid=none, % Turned off the major grid so it doesn't clutter the combined plot
        z buffer=sort,
        enlargelimits=upper,
        xlabel={$y$},
        ylabel={$x$},
        ]
        
        % A common style for all our blocks to keep code clean
        \pgfplotsset{
            cube style/.style={
                only marks,
                scatter,
                mark=cube*,
                mark size=3.5pt, % Exactly half of the x/y unit (10pt) to close all gaps
                visualization depends on={value \thisrow{r} \as \myr},
                visualization depends on={value \thisrow{g} \as \myg},
                visualization depends on={value \thisrow{b} \as \myb},
                scatter/@pre marker code/.append code={
                    \definecolor{markcolor}{rgb}{\myr,\myg,\myb}
                    % draw=black!70 makes the edges highly visible!
                    \scope[draw=black!70, line width=0.2pt, fill=markcolor!85]
                },
                scatter/@post marker code/.append code={
                    \endscope
                }
            }
        }

        % Plot all 4 topologies into the same axis!
        \addplot3 [cube style] table [x=x, y=y, z=z] {interface_stability/heat_square.txt};
        \addplot3 [cube style] table [x=x, y=y, z=z] {interface_stability/heat_ring.txt};
        \addplot3 [cube style] table [x=x, y=y, z=z] {interface_stability/heat_doubleline.txt};
        \addplot3 [cube style] table [x=x, y=y, z=z] {interface_stability/heat_line.txt};

    \end{axis}
\end{tikzpicture}
\caption{Interface occurrence heatmap with 2 interfaces in each layout type. From top to bottom, the layouts doubleline, line, ring, and square are displayed.}\label{fig:interface-stability} 
  \end{subfigure}

  \caption{Robustness and stability analysis of the tactical level across varying mover configurations and layout topologies.}
  \label{fig:robustness}
  % \tn{todo recompute ring data}
\end{figure}
The results in Figure~\ref{fig:robustness} demonstrate a high degree of stability across the different datasets. 
The makespan ($C_{\max}$) observed for the unseen validation sets deviates by no more than $\pm 10\%$ from the result obtained with the training set. This narrow margin indicates that the layout generated by the placement and packing phases is robust and not over-fitted to the particular historic set $\ordersHistory$.

As a further experiment to assess the robustness of the tactical level, the stability of the interface placement is assessed.
The conducted experiment consisted of 100 independent placement optimizations with generated sets mirroring the medicine distribution of historic order \ordersHistory~on line, ring, doubleline, and square
layouts to identify the most occurring interface locations when 2 interfaces are used.
A stable placement should put interface tiles into similar positions when repeated with similar inputs.
The results in Figure~\ref{fig:interface-stability} show the interface heatmap for all 4 considered layouts, with color indicating the frequency of interface placement.
Due to the rotation symmetry of the ring topology, the resulting layouts were canonized so that their first interface is placed at the same coordinates.
The results suggest a strong preference for clustering interfaces at the geometric center of the grid, while the peripheral tiles were used almost exclusively for dispensers. 
While this configuration minimizes theoretical mover travel times, it may pose practical engineering challenges regarding the physical accessibility of the interface for cartridge loading and unloading.

\FloatBarrier

\subsection{Correlation of Placement and Scheduling Objectives}
\label{sec:experiments-correlation}
To validate the link between the proposed objective function for \textsf{Placement} and \textsf{Scheduling}, we analyzed the correlation between the objective values \eqref{eq:placement_problem} obtained during placement optimization via GA and the resulting final scheduling makespan ($C_{\max}$). 
The experimental setup used layouts with 48 tiles (47 for the square topology) and 2 interfaces, with 5 movers fulfilling 50 orders. The scheduling phase was conducted without warm-start initialization, constrained by a 600-second time limit across 5 CPU threads, with a packing configuration of 62 dispensers on the \textit{fast dispensing} setting (see Table~\ref{tab:experimental_settings}).

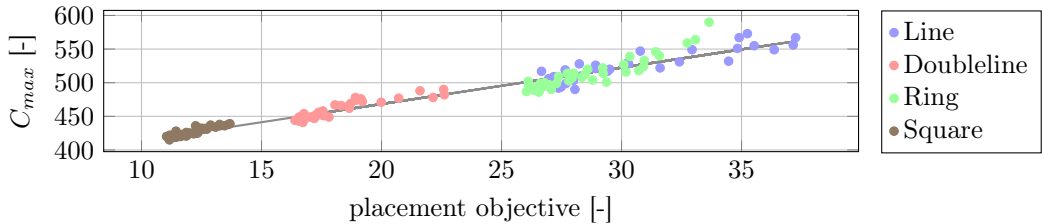
\begin{figure}[ht]
    \centering
    \begin{tikzpicture}
    \begin{axis}[
        height=0.21\textwidth,
        width=0.7\textwidth,
        grid=both,
        xlabel={placement objective [-]},
        ylabel={$C_{max}$ [-]},
        legend pos={outer north east},
        legend cell align={left}
    ]
        \addplot [
            scatter,
            only marks,
            point meta=explicit symbolic,
            mark options={xscale=0.8, yscale=0.8},
            scatter/classes={
                line={blue!40},
                doubleline={red!40},
                ring={green!40},
                square={brown!50!black!70}% <-- don't add comma
            }
        ] table[meta=topology,x=placer,y=cmax,col sep=comma] {experiments/topology_convergence.csv};

        \addplot [no markers, thick, gray!90] table [col sep=comma,x=placer,y={create col/linear regression={y=cmax}}] {experiments/topology_convergence.csv};
        \legend{Line, Doubleline, Ring, Square}
    \end{axis}
    \end{tikzpicture}
    \caption{Correlation of placement and scheduling objective.}
    \label{fig:placement-sched-correlation}
\end{figure} 

The results, illustrated in Figure~\ref{fig:placement-sched-correlation}, demonstrate the progression of the GA across different topologies. Each data point represents a placement solution evaluated during the \textsf{Placement} optimization. 
The scatter plot reveals a linear relationship: as the GA minimizes the placement objective \eqref{eq:placement_problem} through successive generations, the corresponding $C_{\max}$ decreases proportionally. 

This consistent correlation confirms that the placement objective \eqref{eq:placement_problem} serves as a reasonable proxy for the actual scheduling. 
By optimizing for this proxy, \textsf{Placement} effectively drives the search toward configurations that minimize the final execution time. Furthermore, the model correctly identifies the performance potential of different structural layouts, accurately placing the square configuration in the highest-efficiency region (lowest $C_{\max}$) and simultaneously highlighting the inherent physical constraints and higher costs associated with the ring and line topologies.

\FloatBarrier

\subsection{Effect of Mover Count on \texorpdfstring{$C_{max}$}{Makespan}}\label{exp:mvc_cmax}

Figure~\ref{fig:routing_cmax_effect_combined} illustrates the makespan trend as the number of movers increases across the four considered layouts. Initially, adding more movers significantly reduces the makespan, since operations can be distributed more evenly across the system. However, the improvement rate diminishes as the number of movers grows. This plateau effect is explained by the limited number of available dispensers per drug type; adding additional movers beyond a certain point does not yield further speed-up, as the dispensers themselves become the throughput bottleneck. The exact makespan values for each configuration are reported in Tables~\ref{table:expRouting:cmax_d10},~\ref{table:expRouting:cmax_d1}.

\begin{figure}[htbp]
  \centering
  \begin{subfigure}{0.495\textwidth}
    \centering
    \begin{tikzpicture}
      \begin{axis}[
          height=0.6\textwidth,
          width=\textwidth,
          xlabel={number of movers [-]},
          ylabel={$C_\text{max}$ [-]},
          grid=both,
          bar width=3.3pt,
          ybar=0pt,
          xmin=0.5,xmax=12.5,
          xtick distance=1,
          scaled y ticks = false,
      ]
       
      \addplot+ plot table [x=movers, y=cmax, col sep=comma] {experiments/movers_cmax_10/line.csv};
      \addplot+ plot table [x=movers, y=cmax, col sep=comma] {experiments/movers_cmax_10/ring.csv};
      \addplot+ plot table [x=movers, y=cmax, col sep=comma] {experiments/movers_cmax_10/doubleline.csv};
      \addplot+ plot table [x=movers, y=cmax, col sep=comma] {experiments/movers_cmax_10/square.csv};
      \legend{Line, Ring, Doubleline, Square}
      \end{axis}
    \end{tikzpicture}
    \caption{Standard dispensing speed.}
    \label{fig:routing_cmax_effect_moderate}
  \end{subfigure}~~
  \begin{subfigure}{0.495\textwidth}
    \centering
    \begin{tikzpicture}
      \begin{axis}[
          height=0.6\textwidth,
          width=\textwidth,
          xlabel={number of movers [-]},
          grid=both,
          bar width=3.3pt,
          ybar=0pt,
          xmin=0.5,xmax=12.5,
          xtick distance=1,
          scaled y ticks = false,
      ]
       
      \addplot+ plot table [x=movers, y=cmax, col sep=comma] {experiments/movers_cmax_1/line.csv};
      \addplot+ plot table [x=movers, y=cmax, col sep=comma] {experiments/movers_cmax_1/ring.csv};
      \addplot+ plot table [x=movers, y=cmax, col sep=comma] {experiments/movers_cmax_1/doubleline.csv};
      \addplot+ plot table [x=movers, y=cmax, col sep=comma] {experiments/movers_cmax_1/square.csv};
      \legend{Line, Ring, Doubleline, Square}
      \end{axis}
    \end{tikzpicture}
    \caption{Fast dispensing speed.}
    \label{fig:routing_cmax_effect_fast}
  \end{subfigure}
  
  \caption{Effect of mover count on $C_{max}$ for different topologies (64 tiles each) with varying dispensing speeds.}
  \label{fig:routing_cmax_effect_combined}
\end{figure}
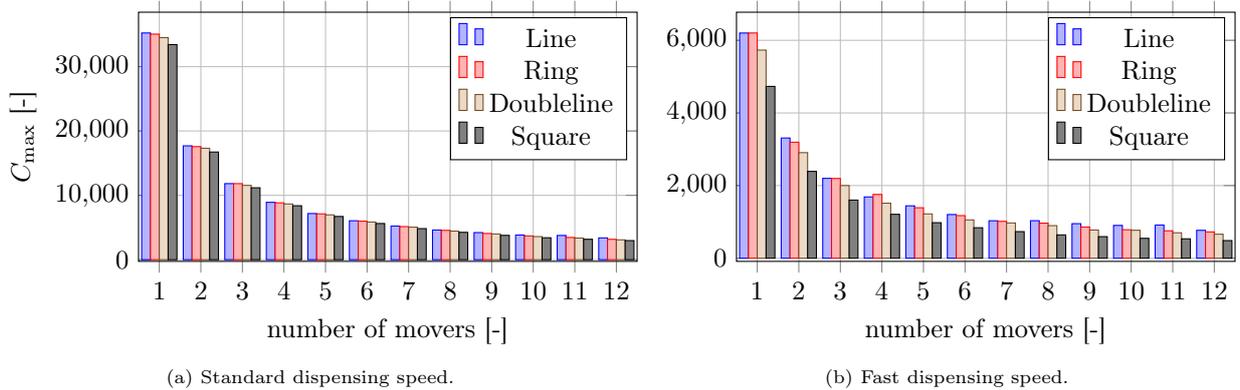

\subsection{Effect of Time Limit on \texorpdfstring{$C_{max}$}{Makespan}}\label{expt:tml_cmax}

The scheduling problem grows exponentially in complexity with the number of orders (if the batching mechanism detailed in Subsection~\ref{batch-heuristic-merging} is not used), making exact optimization infeasible for even moderately sized instances. The aim is therefore to obtain high-quality solutions within practical time limits, raising the question of when to terminate the search.

Figure~\ref{fig:timeexp_be100_m8} evaluates this trade-off by running the scheduler on order sets of sizes 10, 25, 100, and 250 under varying time limits, while Figure~\ref{fig:timeexp_ab100_m8} extends the analysis to sets of 200, 300, and 500 orders.

 \pgfplotsset{
    my distinct colors/.style={
        cycle list={
            {blue, mark=*},               % p10
            {red, mark=square*},          % p20
            {green!60!black, mark=triangle*}, % p50
            {orange, mark=diamond*},      % p100
            {violet, mark=pentagon*},     % p200
            {cyan, mark=otimes*},         % p300
            {magenta, mark=star}          % p500
        }
    }
} 
\begin{figure}[ht]
    \centering
    % --- Left Subfigure ---
    \begin{subfigure}[t]{0.36\textwidth}
        \centering
        \begin{tikzpicture}[baseline=(current axis.north)]
            \begin{axis}[
                height=4.5cm,
                width=\linewidth,
                xlabel={time [s]},
                ylabel={$C_\text{max}$ [-]},
                ymin=0, ymax=7000,    
                grid=both,
                my distinct colors
            ]
                \addplot+ plot table [x=timelimit, y=Cmax, col sep=comma] {experiments/timeexp/timeexp_p10_m8.csv};
                \addplot+ plot table [x=timelimit, y=Cmax, col sep=comma] {experiments/timeexp/timeexp_p20_m8.csv};
                \addplot+ plot table [x=timelimit, y=Cmax, col sep=comma] {experiments/timeexp/timeexp_p50_m8.csv};
                \addplot+ plot table [x=timelimit, y=Cmax, col sep=comma] {experiments/timeexp/timeexp_p100_m8.csv};
            \end{axis}
        \end{tikzpicture}
        \phantomcaption
        \label{fig:timeexp_be100_m8}
    \end{subfigure}
    \hfill
    % --- Right Subfigure ---
    \begin{subfigure}[t]{0.36\textwidth}
        \centering
        \begin{tikzpicture}[baseline=(current axis.north)]
            \begin{axis}[
                height=4.5cm,
                width=\linewidth,
                scaled ticks=false,
                xlabel={time [s]},
                ylabel={},
                ymin=0, ymax=35000,    
                grid=both,
                my distinct colors, 
                cycle list shift=4, 
            ]
                \addplot+ plot table [x=timelimit, y=Cmax, col sep=comma] {experiments/timeexp/timeexp_p200_m8.csv};
                \addplot+ plot table [x=timelimit, y=Cmax, col sep=comma] {experiments/timeexp/timeexp_p300_m8.csv};
                \addplot+ plot table [x=timelimit, y=Cmax, col sep=comma] {experiments/timeexp/timeexp_p500_m8.csv};
            \end{axis}
        \end{tikzpicture}
        \phantomcaption
        \label{fig:timeexp_ab100_m8}
    \end{subfigure}
    \hfill
    % --- Two-Column Legend aligned to the top ---
    \begin{subfigure}[t]{0.24\textwidth}
        \centering
        \begin{tikzpicture}[baseline=(current bounding box.north)]
            \matrix [
                matrix of nodes,
                draw,
                inner sep=0.15cm,
                column sep=1mm,
                row sep=1mm,
                nodes={anchor=mid west, font=\small}
            ] {
                $|\mathcal{P}|:$ & 
                \tikz[baseline=-0.65ex]{\draw[blue, thick] (0,0) -- (0.4,0) plot[mark=*, mark size=1.5pt] coordinates {(0.2,0)};} 10 \\ 
                \tikz[baseline=-0.65ex]{\draw[red, thick] (0,0) -- (0.4,0) plot[mark=square*, mark size=1.5pt] coordinates {(0.2,0)};} 20 &
                \tikz[baseline=-0.65ex]{\draw[green!60!black, thick] (0,0) -- (0.4,0) plot[mark=triangle*, mark size=1.5pt] coordinates {(0.2,0)};} 50 \\ 
                \tikz[baseline=-0.65ex]{\draw[orange, thick] (0,0) -- (0.4,0) plot[mark=diamond*, mark size=1.5pt] coordinates {(0.2,0)};} 100 &
                \tikz[baseline=-0.65ex]{\draw[violet, thick] (0,0) -- (0.4,0) plot[mark=pentagon*, mark size=1.5pt] coordinates {(0.2,0)};} 200 \\ 
                \tikz[baseline=-0.65ex]{\draw[cyan, thick] (0,0) -- (0.4,0) plot[mark=otimes*, mark size=1.5pt] coordinates {(0.2,0)};} 300 &
                \tikz[baseline=-0.65ex]{\draw[magenta, thick] (0,0) -- (0.4,0) plot[mark=star, mark size=1.5pt] coordinates {(0.2,0)};} 500 \\
            };
        \end{tikzpicture}
    \end{subfigure}
    
    \caption{The effect of the time limit on $C_\text{max}$ for different sizes of $\mathcal{P}$ on a square $8\times8\sim2$ topology.}
    \label{fig:timeexp_combined_m8}
\end{figure}
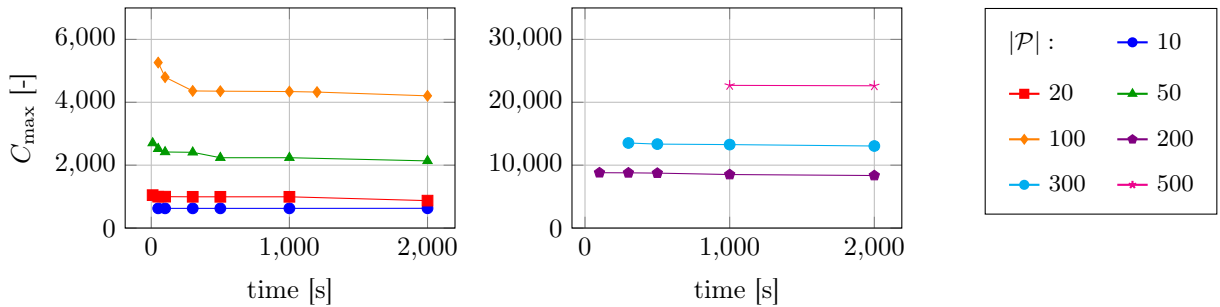
 The results for order sets with 100 or fewer orders show diminishing improvements in makespan with longer run times. Time limits of around 300 seconds appear to be sufficient for practical reasons across all order set sizes.

Figure~\ref{fig:timeexp_ab100_m8} illustrates the impact of instance size on problem complexity. For larger order sets, even identifying a feasible solution requires significantly more time, and subsequent improvements become negligible within any reasonable time limit. This behavior stems from the exponential growth of the solution space that the solver must explore.
\subsection{Comparison Between \texorpdfstring{$C_{max}$}{Makespan}and Lower Bound}\label{exp:lb}

Because the previous analysis focused on identifying practical time limits by comparing makespan across different solver runs, it only provided a relative notion of solution quality. To put these results into a more absolute context, we would ideally compare the obtained schedules against the theoretical optimal makespan. Since computing the exact optimum is computationally intractable, we instead use the lower bound introduced in Section~\ref{sec:lowerbound}. As this bound is derived by relaxing certain problem constraints, the true optimum must lie strictly above it. Consequently, the reported gaps in Table~\ref{table:expFour:compar_wide} are conservative, and the actual distance of our solutions from optimality is even smaller than the numbers suggest.

\begin{table}[htbp]
    \centering
    \setlength{\tabcolsep}{3pt} % Reduces white space between columns to help it fit
    \small % Slightly smaller font to accommodate the width
    \begin{tabular}{c|cccc|cccc|cccc}
        \toprule
        \multirow{2}{*}{\begin{tabular}{@{}c@{}}Mover\\Count\end{tabular}} & 
        \multicolumn{4}{c|}{Order Set Size: 25} & 
        \multicolumn{4}{c|}{Order Set Size: 50} & 
        \multicolumn{4}{c}{Order Set Size: 100} \\
        \cmidrule{2-13}
         & LB & $C_{max}$ & Gap & Gap [\%] & LB & $C_{max}$ & Gap & Gap [\%] & LB & $C_{max}$ & Gap & Gap [\%] \\
        \midrule
        
        2  & 3804 & 4173 & 369 & 8.84\%  & 7541 & 8518 & 977 & 11.47\% & 15304 & 16727 & 1423 & 8.51\% \\
        
        6  & 1270 & 1577 & 307 & 19.47\% & 2514 & 3008 & 494 & 16.42\% & 5101  & 5879  & 778  & 13.23\% \\
        
        10 & 767  & 1010 & 243 & 24.06\% & 1509 & 1896 & 387 & 20.41\% & 3063  & 3692  & 629  & 17.04\% \\
        
        12 & --   & --   & --  & --      & --   & --   & --  & --      & 2555  & 3416  & 861  & 25.20\% \\
        
        \bottomrule
    \end{tabular}
    \caption{Comparison of lower bound vs. actual makespan across different order set sizes ($8\times8\sim2$ layout).}
    \label{table:expFour:compar_wide}
\end{table}

The results show that the $C_{max}$ of our solutions is consistently close to the lower bound, with gaps generally remaining in a moderate range. For larger order sets combined with many movers, the differences increase more noticeably, reinforcing that problem complexity at this scale begins to limit how close the solver can remain to the optimum.

\FloatBarrier
\subsection{Achieved Scalability: Effect of DAG-based Schedule Merging}\label{exp:batching}

Subsection~\ref{batch-heuristic-merging} discusses batching larger order sets and then merging them together. To measure the impact this technique has on the overall $C_{max}$, we test different batch sizes for varying numbers of orders while keeping the number of movers fixed at eight. Each curve shows the evolution of the makespan ($C_{max}$) over time as the solver continues to improve the solution.

\pgfplotsset{
    % 1. Define specific styles for each batch size
    batch50/.style={blue, mark=*},
    batch100/.style={red, mark=square*},
    batch200/.style={green!60!black, mark=triangle*},
    batch300/.style={orange, mark=diamond*},
    batch500/.style={violet, mark=pentagon*},
    % 2. Common axis style for all plots
    pltstyle/.style={
        width=\linewidth,
        height=3.8cm,
        grid=both,
        xlabel={time [s]},
        ylabel={$C_\text{max}$},
        scaled y ticks=false,
        yticklabel style={
            /pgf/number format/fixed,
            /pgf/number format/precision=0,
            text width=3.5em,   
            align=right       
        },
        legend style={font=\footnotesize},
        xlabel style={font=\small},
        ylabel style={font=\small},
    }
}
\begin{figure}[!htb]
    \centering
    % --- Left side: 2x2 Grid of Plots ---
    \begin{minipage}[t]{0.82\textwidth}
        \centering
        % --- Row 1 ---
        \begin{subfigure}[t]{0.48\textwidth}
            \centering
            \begin{tikzpicture}[baseline=(current axis.north)]
                \begin{axis}[pltstyle]
                    \addplot+[batch50]  table [x=timelimit,y=Cmax, col sep=comma] {experiments/batch_size_heuristics/tikz_patients100_batch50.csv}; 
                    \addplot+[batch100] table [x=timelimit,y=Cmax, col sep=comma] {experiments/batch_size_heuristics/tikz_patients100_batch100.csv}; 
                \end{axis}
            \end{tikzpicture}
            \caption{$|\orders|=100$.}
            \label{subfig:orders100}
        \end{subfigure}
        \hfill
        \begin{subfigure}[t]{0.48\textwidth}
            \centering
            \begin{tikzpicture}[baseline=(current axis.north)]
                \begin{axis}[pltstyle,ylabel={}]
                    \addplot+[batch50]  table [x=timelimit,y=Cmax, col sep=comma] {experiments/batch_size_heuristics/tikz_patients200_batch50.csv}; 
                    \addplot+[batch100] table [x=timelimit,y=Cmax, col sep=comma] {experiments/batch_size_heuristics/tikz_patients200_batch100.csv}; 
                    \addplot+[batch200] table [x=timelimit,y=Cmax, col sep=comma] {experiments/batch_size_heuristics/tikz_patients200_batch200.csv}; 
                \end{axis}
            \end{tikzpicture}
            \caption{$|\orders|=200$.}
            \label{subfig:orders200}
        \end{subfigure}
    
        \vspace{0.8em}
    
        % --- Row 2 ---
        \begin{subfigure}[t]{0.48\textwidth}
            \centering
            \begin{tikzpicture}[baseline=(current axis.north)]
                \begin{axis}[pltstyle]
                    \addplot+[batch50]  table [x=timelimit,y=Cmax, col sep=comma] {experiments/batch_size_heuristics/tikz_patients300_batch50.csv}; 
                    \addplot+[batch100] table [x=timelimit,y=Cmax, col sep=comma] {experiments/batch_size_heuristics/tikz_patients300_batch100.csv}; 
                    \addplot+[batch300] table [x=timelimit,y=Cmax, col sep=comma] {experiments/batch_size_heuristics/tikz_patients300_batch300.csv}; 
                \end{axis}
            \end{tikzpicture}
            \caption{$|\orders|=300$.}
            \label{subfig:orders300}
        \end{subfigure}
        \hfill
        \begin{subfigure}[t]{0.48\textwidth}
            \centering
            \begin{tikzpicture}[baseline=(current axis.north)]
                \begin{axis}[pltstyle,ylabel={}, ymin=21700, ymax=26600, 
            ytick={ 22000,24000, 26000}]
                    \addplot+[batch50]  table [x=timelimit,y=Cmax, col sep=comma] {experiments/batch_size_heuristics/tikz_patients500_batch50.csv}; 
                    \addplot+[batch100] table [x=timelimit,y=Cmax, col sep=comma] {experiments/batch_size_heuristics/tikz_patients500_batch100.csv}; 
                    \addplot+[batch500] table [x=timelimit,y=Cmax, col sep=comma] {experiments/batch_size_heuristics/tikz_patients500_batch500.csv}; 
                \end{axis}
            \end{tikzpicture}
            \caption{$|\orders|=500$.}
            \label{subfig:orders500}
        \end{subfigure}
    \end{minipage}
    \hfill
    % --- Right side: One-Column Legend ---
    \begin{subfigure}[t]{0.16\textwidth}
        \centering
        \begin{tikzpicture}[baseline=(current bounding box.north)]
            \matrix [
                matrix of nodes,
                draw,
                inner sep=0.12cm,
                row sep=1.25mm,
                nodes={anchor=mid west, font=\small}
            ] {
                Batch size: \\
                \tikz[baseline=-0.6ex]{\draw[blue, thick] (0,0) -- (0.4,0) plot[mark=*, mark size=1.5pt] coordinates {(0.2,0)};} 50 \\
                \tikz[baseline=-0.6ex]{\draw[red, thick] (0,0) -- (0.4,0) plot[mark=square*, mark size=1.5pt] coordinates {(0.2,0)};} 100 \\
                \tikz[baseline=-0.6ex]{\draw[green!60!black, thick] (0,0) -- (0.4,0) plot[mark=triangle*, mark size=1.5pt] coordinates {(0.2,0)};} 200 \\
                \tikz[baseline=-0.6ex]{\draw[orange, thick] (0,0) -- (0.4,0) plot[mark=diamond*, mark size=1.5pt] coordinates {(0.2,0)};} 300 \\
                \tikz[baseline=-0.6ex]{\draw[violet, thick] (0,0) -- (0.4,0) plot[mark=pentagon*, mark size=1.5pt] coordinates {(0.2,0)};} 500 \\
            };
        \end{tikzpicture}
    \end{subfigure}

    \caption{The effect of the time limit on $C_\text{max}$ for different batch sizes. Square $8\times8\sim2$ layout, 8 movers.}
    \label{fig:timeexp_batches}
\end{figure}
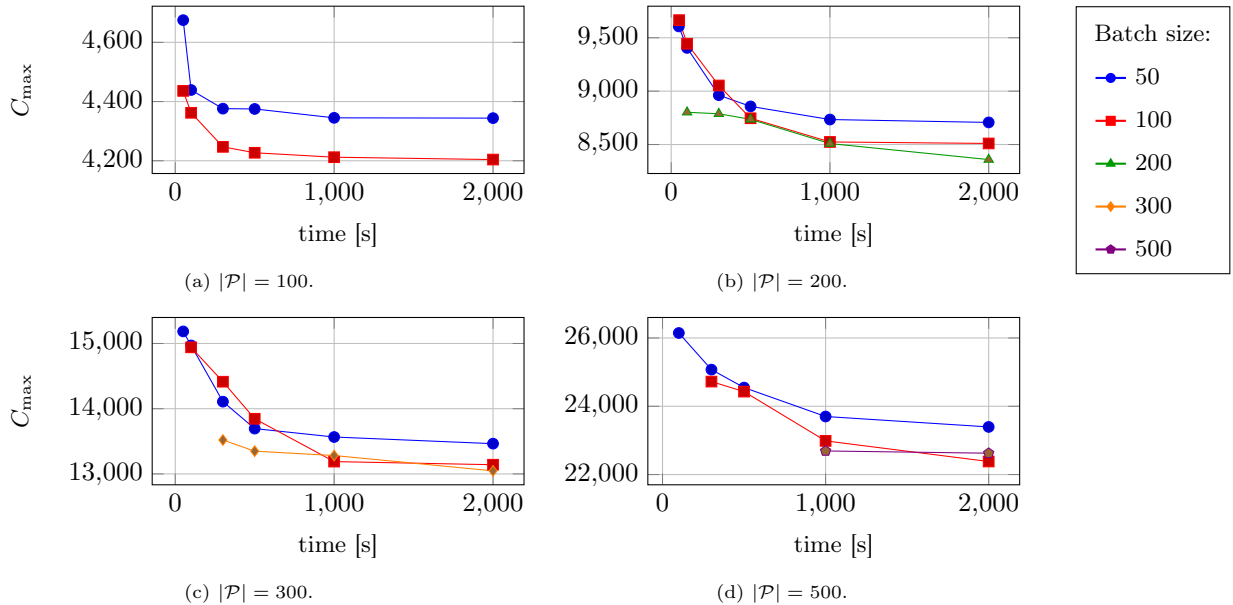

The results demonstrate that partitioning the problem into smaller batches is an effective strategy for scaling to larger order sets. Although this decomposition introduces minor suboptimalities for smaller instances, it substantially improves computational efficiency. For larger instances, the method attains solutions of comparable or superior quality (Figure~\ref{subfig:orders500}, stitching batches of size 200 (red line) has better performance than no batching (purple line)) within a significantly shorter time horizon, indicating that the trade-off between optimality and tractability is highly favorable under realistic time constraints.

\FloatBarrier

\subsection{Effect of the Routing Phase on Solution Quality}\label{exp:rting}
To assess the effect of the routing phase on system performance, we report the makespan values obtained from the scheduling model and after routing with conflict resolution.
Tables~\ref{table:expRouting:cmax_d10},~\ref{table:expRouting:cmax_d1} present the results for four layout types (ring, square, doubleline and line) across different numbers of movers and two different drug dispensing speeds. The values illustrate the relative changes in makespan between the scheduled and routed solutions.
\begin{table}[ht]
    \centering
    \resizebox{\textwidth}{!}{%
        \begin{tabular}{c|c|c|c|c|c|c|c|c}
             \toprule
            \multirow{2}{*}{\shortstack{Mover\\count}} &
            \multicolumn{2}{c}{Square $C_{\max}$} &
            \multicolumn{2}{c}{Doubleline $C_{\max}$} &
            \multicolumn{2}{c}{Ring $C_{\max}$} &
            \multicolumn{2}{c}{Line $C_{\max}$} \\
            \cmidrule(lr){2-3}\cmidrule(lr){4-5}\cmidrule(lr){6-7}\cmidrule(lr){8-9}
             & Scheduling & Routing & Scheduling & Routing & Scheduling & Routing & Scheduling & Routing \\
            \midrule
            1 & 33427 & +0~(0.0\%) & 34506 & +0~(0.0\%) & 35056 & +0~(0.0\%) & 35233 & +0~(0.0\%) \\
            4 & 8373 & +10~(0.1\%) & 8638 & +21~(0.2\%) & 8806 & +29~(0.3\%) & 8875 & +38~(0.4\%) \\
            8 & 4224 & +24~(0.6\%) & 4399 & +37~(0.8\%) & 4512 & +46~(1.0\%) & 4560 & +53~(1.2\%) \\
            12 & 2942 & +23~(0.8\%) & 3042 & +42~(1.4\%) & 3108 & +56~(1.8\%) & 3286 & +100~(3.0\%) \\
            \bottomrule
        \end{tabular}
    }
    \caption{Makespan values for different movers, before and after routing; standard speed.}
    % \tn{jeste se da ten post-route napsat jen jako $+xx$ čimž se ušetří tak 2 číslice $\xrightarrow{}$ bude to užší}
    \label{table:expRouting:cmax_d10}
\end{table}
\begin{table}[ht]
    \centering
    \resizebox{\textwidth}{!}{%
        \begin{tabular}{c|c|c|c|c|c|c|c|c}
            \toprule
            \multirow{2}{*}{\shortstack{Mover\\count}} &
            \multicolumn{2}{c}{Square $C_{\max}$} &
            \multicolumn{2}{c}{Doubleline $C_{\max}$} &
            \multicolumn{2}{c}{Ring $C_{\max}$} &
            \multicolumn{2}{c}{Line $C_{\max}$} \\
            \cmidrule(lr){2-3}\cmidrule(lr){4-5}\cmidrule(lr){6-7}\cmidrule(lr){8-9}
             & Scheduling & Routing & Scheduling & Routing & Scheduling & Routing & Scheduling & Routing \\
            \midrule
            1 & 4731 & +0~(0.0\%) & 5731 & +0~(0.0\%) & 6202 & +0~(0.0\%) & 6201 & +0~(0.0\%) \\
            4 & 1199 & +10~(0.8\%) & 1492 & +23~(1.5\%) & 1714 & +42~(2.5\%) & 1638 & +50~(3.1\%) \\
            8 & 619 & +23~(3.7\%) & 858 & +39~(4.5\%) & 914 & +52~(5.7\%) & 980 & +53~(5.4\%) \\
            12 & 448 & +37~(8.3\%) & 605 & +56~(9.3\%) & 665 & +60~(9.0\%) & 689 & +79~(11.5\%) \\
            \bottomrule
        \end{tabular} 
    }
    \caption{Makespan values for different movers, before and after routing; fast dispensing speed.}
    \label{table:expRouting:cmax_d1}
\end{table}

The results indicate that the routing overhead is minimal across all layout types and numbers of movers. This demonstrates that the majority of the routing problem is already effectively handled by the scheduling algorithm, confirming that our decomposition approach is well-structured. Table~\ref{table:expRouting:cmax_d1} demonstrates that increasing the dispensing speed leads to a larger routing overhead; as the movers spend comparatively more time in transit than dispensing, the likelihood of spatial conflicts increases, requiring more resolution time.

\FloatBarrier
\section{Conclusion}\label{sec:conclusion}
This work addresses the shift toward mass personalization in pharmaceutical manufacturing by framing the production process as a complex logistics challenge. Inspired by planar transport technologies like the Beckhoff XPlanar, we designed a framework that splits decision-making into a tactical layer for long-term dispenser layout and an operational layer for daily execution.

At the tactical level, we optimized the physical configuration of the production line by utilizing a Mixed-Integer Quadratic Programming model to solve the Packing problem, which allocates dispensers based on drug co-occurrence patterns in historical patient data. Subsequently, a Genetic Algorithm addressed the Placement problem by arranging these dispensers to minimize expected mover travel distances. Our experiments demonstrated that the objective function used in this phase strongly correlates with the final scheduling makespan, while robustness analysis confirmed that these layouts perform consistently well on unseen validation sets.

At the operational level, we focused on daily order fulfillment by formulating the Scheduling problem through Constraint Programming, modeling movers as take-give reservoir resources. To ensure scalability, we handle larger order sets via batch decomposition. The Scheduling phase is complemented by a Routing phase that uses iterative conflict-resolution and DAG-based logic to generate collision-free paths. Results indicate that routing overhead remains minimal, confirming that the schedule accurately approximates physical movements. Practically, this framework serves as a simulation tool for investors and system designers to estimate throughput and identify diminishing returns on hardware resources before physical implementation.
% \clearpage

\section*{Declaration of generative AI and AI-assisted technologies in the manuscript preparation process.}
During the preparation of this work, the authors used ChatGPT and Gemini to assist with refining linguistic clarity, improving structural flow, and generating TikZ code for scientific diagrams. All diagrams are based on data collected and processed independently by the authors; the AI tools had no access to the underlying research data, ensuring the technical accuracy of the visualizations. Additionally, generative AI was used to produce the text-to-speech audio for the supplemental video. The authors reviewed and edited all outputs and take full responsibility for the content and integrity of the final publication and all associated multimedia.
\section*{Acknowledgments}
This work was supported by the European Union under the ROBOPROX (Robotics and advanced industrial production) project (reg. no. 
CZ.02.01.01/00/22\_008/0004590) and the Grant Agency of the Czech Technical University in
Prague, grant No. SGS26/117/OHK3/1T/37.
\bibliography{references}

\clearpage
\appendix
\counterwithin{figure}{section}
\counterwithin{table}{section}
\section{Notation}
\begin{table}[ht]
\footnotesize
\renewcommand{\arraystretch}{1.1} % <--- Adds 30% more vertical space globally
\centering
\begin{tabularx}{\linewidth}{>{\centering\arraybackslash}m{0.05\linewidth} X >{\centering\arraybackslash}m{0.14\linewidth} X}

\toprule
\text{Symbol} & \text{Description} & \text{Symbol} & \text{Description} \\
\midrule

\label{sym:layout}\layout & Topology layout of the environment. & \label{sym:numTiles}\numTiles & Number of tiles. \\ 

\label{sym:interfaces}\interfaces & Set of interface tiles. & \label{sym:numInterfaces}\numInterfaces & Number of interfaces. \\ 

\label{sym:numDispensers}\numDispensers & Number of dispensers. & \label{sym:patients}\orders & Set of orders. \\ 

\label{sym:movers}\movers & Set of movers. & \label{sym:numMovers}\numMovers & Number of movers. \\ 

\label{sym:maxMovers}\maxMovers & Maximum number movers allowed. & \label{sym:drugs}\drugs & Set of drugs. \\ 

\label{sym:packing}\packedDispensers & Packed dispensers & \label{sym:placementExt}\placementExt & Placement function for dispensers to tiles. \\ 

\label{sym:patientsHistory}\ordersHistory & Set of historical order data. & \label{sym:tasks}\tasks & Set of current orders. \\ 

\label{sym:distMatrix}\distMatrix & Distance matrix over the set of tiles. & \label{sym:fillDuration}\fillDuration & Duration of dispensing of drug $g$ for order $p$. \\ 

\label{sym:tasksForPatient}\tasksForPatient{p} & Set of drugs requested by order $p \in \orders$. & \label{sym:taskAlternatives}\taskAlternatives & Set of alternative operations. \\ 

\label{sym:patientPathTime}\patientPathTime & Time to traverse the path for order $p \in \orders$. & \label{sym:sites}\sites & Set of resting sites. \\ 

\label{sym:trayOperation}\trayOperation & Constant time for cartridge swap. & \label{sym:transits}\transits & Set of transits between sites with idle periods. \\ 
\bottomrule
\end{tabularx}
\caption{List of symbols.} 
\label{sym:layout_table}
\end{table}
\FloatBarrier
\section{Packing: Maximization of Pairwise Correlations}\label{sec:appendix_correlations}

This section describes the additional step in the packing, which aims to maximize the sum of the pairwise correlations.
The motivation comes from the correlation matrix in Figure~\ref{fig:corr-matrix} that we derived from the real order data~(National Center for Health Statistics, 2012).
The matrix displays the pairwise correlation of the individual drugs appearing in the patients' prescriptions.
By manual investigation, we found that, for example, drugs for high blood pressure tend to co-occur with drugs for diabetes, i.e., a positive correlation.
On the other hand, a negative correlation was discovered, for example, for a pair of drugs, both used for cardiac patients that work as alternatives to each other---one is a stronger variant of the other.
% correlation matrix:
\begin{figure}[htbp]
\pgfplotsset{
  colormap={bluewhitebrown}{
    rgb(0cm)=(0.25,0.12,0.10)  % dark brown for negative
    rgb(0.5cm)=(1,1,1)         % 0 -> white
    rgb(1cm)=(0.05,0.20,0.45)  % deep blue for positive
  }
}
    \centering
    \begin{tikzpicture}
          \begin{axis}[
                axis on top,
                axis equal image,
                width=0.4\textwidth,
                xlabel={drug $g$ [-]},
                ylabel={drug $g$ [-]},
                point meta=explicit,
                point meta min=-0.3,
                point meta max=0.3,
                xmin=-1,xmax=40,
                ymin=-1,ymax=40,
                % colormap/viridis,
                colormap name=bluewhitebrown,
                % colormap={whiteviridis}{
                %     rgb255(0cm)=(33,145,140);   
                %     rgb255(0.25cm)=(59,82,139);   % Deep blue from viridis
                %     rgb255(0.5cm)=(255,255,255);  
                %     rgb255(0.75cm)=(94,201,98);   % Green from viridis
                %     rgb255(1cm)=(37,55,250)      % Bright blue accent at max
                % },
                colorbar,
                % colorbar style={
                %     ylabel=expected tile utilization [-]
                % }
            ]
                \addplot [
                    matrix plot*,
                ] table [meta index=2] {experiments/correlation_matrix.csv};
            \end{axis}
        \end{tikzpicture}
    \caption{Example of drug correlation matrix $\m{O}=\left\{o_{g,g^\prime}\right\}_{g,g^\prime\in\drugs}$ derived from real data (with subtracted main diagonal).}
    \label{fig:corr-matrix}
\end{figure}

Therefore, our idea would be to find, among all solutions with the minimal maximum tile utilization, the one that also maximizes the sum of pairwise correlations between drugs when placed on the same tile.
In this way, we can reduce the number of steps, as often co-occurring drugs are filled on the same tile without moving or transitioning to a different tile.
Initially, we experimented with a modification of model (4.1)--(4.11) that would express this secondary objective via the scalarization technique. %(implemented by Gurobi's hierarchical objective functions).
However, the results were largely unsatisfactory.
Thus, we decided to split the solution into two phases: first, we solve model (4.1)--(4.11), and then fix the maximum tile utilization \hatMaxLoad and the number of dispensers \hatDispenserCount for each drug $g$.
%, to re-optimize the occurrence of dispensers on the individual tiles to maximize the sum of pairwise correlations. 

After that, we solve a correlation model that aims to maximize pairwise correlations given the maximum utilization bound for each tile.
The inputs to the correlation model are feasible values from the packing model (4.1)--(4.11), where \hatMaxLoad is achieved maximum tile utilization, the number of tiles that drug $g$ occupies \hatDispenserCount and the expected utilization of a single dispenser \hatDispenserLoad of drug $g$.
These values are fixed in the correlation model and not subject to optimization; the only free variables are tile-drug assignments $y_{k,g}\in\{0,1\}$:
% \textbf{Given:}
% \begin{itemize}
%     \item \drugUtil: total estimated utilization time for medicine $i\in I$
%     \item \hatMaxLoad: maximum dispenser utilization
%     \item \hatDispenserCount: number of copies of drug $i\in I$
%     \item \hatDispenserLoad: expected utilization of a single dispensor of drug $i$
% \end{itemize}
% \textbf{Variables}:
% \begin{itemize}
%     \item $y_{k,i}$ assignment of dispensor of drug $i$ to tile $k$
% \end{itemize}
\begin{align}
   &\max \sum^{\numTiles}_{k=1}\sum_{g,g^\prime\in \drugs, g\neq g^\prime} o_{g,g^\prime}\cdot y_{k,g}\cdot y_{k,g^\prime} \label{eq:pack2-obj}\\
   \text{subject to}&\notag \\
   %&\hatMaxLoad \geq \tileLoad\quad \forall k \in \{1,\ldots, n\}\\
    &\sum^{\numTiles}_{k=1} y_{k,g} = \hatDispenserCount \quad \forall g \in \drugs\\
    % &\dispenserLoad \cdot \hatDispenserCount = \drugUtil \quad \forall i \in I \\
    &1\leq \sum_{g\in \drugs}y_{k,g} \leq d_\text{max} \quad \forall k \in \{1, \ldots, \numTiles\}\\
    &\sum_{g\in \drugs} \hatDispenserLoad\cdot y_{k,g} \leq \hatMaxLoad \quad \forall k \in  \{1, \ldots, \numTiles-\numInterfaces\}\\
    %&\sum^n_{k=1} \sum_{i\in I} y_{k,i} \leq d_{max}\\
    %&\hatDispenserCount \geq 1 \quad \forall i \in I\\
    \text{where}&\notag\\
    &y_{k,g} \in \{0,1\} \quad \forall k \in \{1,\ldots, \numTiles\}, \forall g \in \drugs. \label{eq:pack2-finish}
    %&\dispenserLoad \in \mathbb{R}^+_0 \quad \forall i \in I
    %&\tileLoad \in \mathbb{R}^+_0 \quad \forall k \in \{1,\ldots, n\}
\end{align}
The model \eqref{eq:pack2-obj}--\eqref{eq:pack2-finish} is an MIQP model, which resembles a variant of the quadratic multiple knapsack problem.
%The number of copies of dispenser $i$ is fixed from the previous stage by \hatDispenserCount, as well as the maximal tile load \hatMaxLoad.
%Within that, we group dispensers on tiles such that we maximize total pairwise correlations.
The optimization of pairwise correlations indeed has some merits. 
For example, an example packing produced by the first stage has a total sum of pairwise correlations of $-0.31$, while after optimization, it is $1.88$---with the same maximum tile utilization.

\section{Experiments: Extended Data Tables}\label{sec:extended_tables}
Some of the tables in the manuscript were condensed to highlight key trends and significant values for clarity. This section provides the extended versions of those tables.
\begin{table}[ht]
    \centering
    \resizebox{\textwidth}{!}{%
        \begin{tabular}{c|c|c|c|c|c|c|c|c}
             \toprule
            \multirow{2}{*}{\shortstack{Mover\\count}} &
            \multicolumn{2}{c}{Square $C_{\max}$} &
            \multicolumn{2}{c}{Doubleline $C_{\max}$} &
            \multicolumn{2}{c}{Ring $C_{\max}$} &
            \multicolumn{2}{c}{Line $C_{\max}$} \\
            \cmidrule(lr){2-3}\cmidrule(lr){4-5}\cmidrule(lr){6-7}\cmidrule(lr){8-9}
             & Scheduling & Routing & Scheduling & Routing & Scheduling & Routing & Scheduling & Routing \\
            \midrule
        1 & 33427 & +0~(0.0\%) & 34506 & +0~(0.0\%) & 35056 & +0~(0.0\%) & 35233 & +0~(0.0\%) \\
        2 & 16722 & +7~(0.0\%) & 17295 & +22~(0.1\%) & 17556 & +14~(0.1\%) & 17663 & +28~(0.2\%) \\
        3 & 11163 & +11~(0.1\%) & 11524 & +21~(0.2\%) & 11782 & +35~(0.3\%) & 11788 & +31~(0.3\%) \\
        4 & 8373 & +10~(0.1\%) & 8638 & +21~(0.2\%) & 8806 & +29~(0.3\%) & 8875 & +38~(0.4\%) \\
        5 & 6700 & +14~(0.2\%) & 6943 & +23~(0.3\%) & 7091 & +30~(0.4\%) & 7131 & +39~(0.5\%) \\
        6 & 5602 & +17~(0.3\%) & 5822 & +27~(0.5\%) & 5947 & +41~(0.7\%) & 5992 & +54~(0.9\%) \\
        7 & 4808 & +17~(0.4\%) & 5030 & +24~(0.5\%) & 5092 & +48~(0.9\%) & 5151 & +75~(1.5\%) \\
        8 & 4224 & +24~(0.6\%) & 4399 & +37~(0.8\%) & 4512 & +46~(1.0\%) & 4560 & +53~(1.2\%) \\
        9 & 3762 & +17~(0.5\%) & 3961 & +33~(0.8\%) & 4012 & +58~(1.4\%) & 4137 & +72~(1.7\%) \\
        10 & 3388 & +19~(0.6\%) & 3557 & +31~(0.9\%) & 3623 & +73~(2.0\%) & 3757 & +71~(1.9\%) \\
        11 & 3137 & +33~(1.1\%) & 3310 & +50~(1.5\%) & 3372 & +80~(2.4\%) & 3659 & +104~(2.8\%) \\
        12 & 2942 & +23~(0.8\%) & 3042 & +42~(1.4\%) & 3108 & +56~(1.8\%) & 3286 & +100~(3.0\%) \\
        \bottomrule
    \end{tabular}
    }
    \caption{Full version of Table~4.}
    \label{table:expRouting:cmax_d10_full}
\end{table}
\begin{table}[ht]
    \centering
    \resizebox{\textwidth}{!}{%
        \begin{tabular}{c|c|c|c|c|c|c|c|c}
             \toprule
            \multirow{2}{*}{\shortstack{Mover\\count}} &
            \multicolumn{2}{c}{Square $C_{\max}$} &
            \multicolumn{2}{c}{Doubleline $C_{\max}$} &
            \multicolumn{2}{c}{Ring $C_{\max}$} &
            \multicolumn{2}{c}{Line $C_{\max}$} \\
            \cmidrule(lr){2-3}\cmidrule(lr){4-5}\cmidrule(lr){6-7}\cmidrule(lr){8-9}
             & Scheduling & Routing & Scheduling & Routing & Scheduling & Routing & Scheduling & Routing \\
            \midrule
            1 & 4731 & +0~(0.0\%) & 5731 & +0~(0.0\%) & 6202 & +0~(0.0\%) & 6201 & +0~(0.0\%) \\
            2 & 2386 & +5~(0.2\%) & 2897 & +10~(0.3\%) & 3181 & +13~(0.4\%) & 3296 & +14~(0.4\%) \\
            3 & 1592 & +9~(0.6\%) & 1980 & +21~(1.1\%) & 2171 & +25~(1.2\%) & 2167 & +32~(1.5\%) \\
            4 & 1199 & +10~(0.8\%) & 1492 & +23~(1.5\%) & 1714 & +42~(2.5\%) & 1638 & +50~(3.1\%) \\
            5 & 962 & +16~(1.7\%) & 1187 & +33~(2.8\%) & 1362 & +27~(2.0\%) & 1410 & +32~(2.3\%) \\
            6 & 817 & +22~(2.7\%) & 1012 & +43~(4.2\%) & 1131 & +42~(3.7\%) & 1158 & +45~(3.9\%) \\
            7 & 710 & +22~(3.1\%) & 935 & +36~(3.9\%) & 969 & +46~(4.7\%) & 987 & +44~(4.5\%) \\
            8 & 619 & +23~(3.7\%) & 858 & +39~(4.5\%) & 914 & +52~(5.7\%) & 980 & +53~(5.4\%) \\
            9 & 571 & +23~(4.0\%) & 728 & +48~(6.6\%) & 813 & +46~(5.7\%) & 885 & +66~(7.5\%) \\
            10 & 524 & +24~(4.6\%) & 720 & +50~(6.9\%) & 743 & +41~(5.5\%) & 834 & +69~(8.3\%) \\
            11 & 500 & +34~(6.8\%) & 642 & +58~(9.0\%) & 712 & +43~(6.0\%) & 807 & +107~(13.3\%) \\
            12 & 448 & +37~(8.3\%) & 605 & +56~(9.3\%) & 665 & +60~(9.0\%) & 689 & +79~(11.5\%) \\
            \bottomrule
        \end{tabular}
        }
    \caption{Full version of Table~5.}
    \label{table:expRouting:cmax_d1_full}
\end{table}

\end{document}